\newcommand{\R}{\mathbb{R}}
\newcommand{\F}{{\mathcal F}}
\newcommand{\G}{{\mathcal G}}
\newcommand{\eps}{\varepsilon}
\newcommand{\p}{\partial}
\renewcommand{\l}{\lambda}
\newcommand{\Leb}{{\mathscr L}}
\renewcommand{\div}{{\rm div}\,}
\newcommand{\divtx}{{\rm div}_{t,x}\,}
\newcommand{\weaks}{\stackrel{*}{\rightharpoonup}}
\newcommand{\loc}{{\rm loc}}
\newcommand{\ul}{\underline \eta}
\newcommand{\ulq}{\underline q}
\newtheorem{theorem}{Theorem}[section]
\newtheorem{lemma}[theorem]{Lemma}
\newtheorem{propos}[theorem]{Proposition}
\newtheorem{corol}[theorem]{Corollary}
\theoremstyle{definition}
\newtheorem{definition}[theorem]{Definition}
\theoremstyle{remark}
\newtheorem{remark}[theorem]{Remark}
\newtheorem{question}[theorem]{Question}
\numberwithin{equation}{section}
\begin{document}
\bibliographystyle{plain}
\title[Well-posedness of continuity equations and applications]{Some new well-posedness results for continuity and transport equations, and applications to the chromatography system}
\author{Luigi Ambrosio}
\address{L.A.: Scuola Normale Superiore,
Piazza dei Cavalieri 7, 56126 Pisa, Italy}
\email{l.ambrosio@sns.it}
\author{Gianluca Crippa}
\address{G.C.: Dipartimento di Matematica,
Universit\`a degli Studi di Parma,
viale G.P.~Usberti 53/A (Campus),
43100 Parma, Italy}
\email{gianluca.crippa@unipr.it}
\author{Alessio Figalli}
\address{A.F.: Centre de Math\'ematiques Laurent Schwartz, \'Ecole Polytechnique,
91128 Palaiseau, France}
\email{figalli@math.polytechnique.fr}
\author{Laura V.~Spinolo}
\address{L.V.S.: Centro De Giorgi, Collegio Puteano, Scuola Normale Superiore,
Piazza dei Ca\-va\-lie\-ri 3, 56126 Pisa, Italy}
\email{laura.spinolo@sns.it}

\begin{abstract}
We obtain various new well-posedness results for continuity and
transport equations, among them an existence and uniqueness theorem
(in the class of strongly continuous solutions) in the case of
nearly incompressible vector fields, possibly having a blow-up of
the $BV$ norm at the initial time. We apply these results (valid in
any space dimension) to the $k \times k$ chromatography system of
conservation laws and to the $k \times k$ Keyfitz and Kranzer
system, both in one space dimension.
\end{abstract}

\maketitle

\section{Introduction}
\subsection{Continuity and transport equations}

In the last few years some progress has been made on the well-posedness of the continuity and transport
equations
$\partial_t \rho+\div (b\rho)=0$, $\partial_t w+b\cdot\nabla w=0$,
for weakly differentiable (in the space variables) velocity fields $b$.
The first seminal paper by DiPerna and Lions \cite{DiPLi} considered Sobolev vector fields with spatial divergence
$\div b$ in $L^1([0,T];L^\infty)$, and more recently in \cite{Amb:trabv} Ambrosio
extended the result to $BV$
vector fields, assuming absolute continuity of the divergence and $L^1([0,T];L^\infty)$ regularity of
its negative part (see also the lecture notes by Ambrosio and Crippa \cite{ambcri:ex}).
This bound is the simplest way, as it can be easily seen with the method
of characteristics, to prevent blow-up in finite time of the solutions; on the other hand, it was
already clear from the application made in \cite{ABDL,ambdel} to the Keyfitz and Kranzer
system \cite{KK}, that this assumption is not very natural. The Keyfitz and Kranzer system has the peculiar form
$$
\partial_t u+\div\bigl(uf(|u|)\bigr)=0
$$
and (at least formally) it decouples in a scalar conservation law for $\rho:=|u|$, namely
$$\partial_t\rho+\div(\rho f(\rho))=0\,,$$ and a transport equation for the ``angular''
part $\theta$, related to $u$ by $u=\rho\theta$:
$$
\partial_t\theta+ f(\rho)\cdot\nabla\theta=0\,.
$$
If we take $\rho$ as the entropy admissible solution of the scalar
conservation law, it turns out that the velocity field $b=f(\rho)$
in the transport equation belongs to $BV$ if the initial condition
$\bar\rho$ is in $BV$, but its distributional divergence needs not
be absolutely continuous; the only (weaker) information available
is that there is a bounded function, namely $\rho$, transported by
$b$. This leads to the concept of nearly incompressible vector
fields (namely those $b$ for which a bounded transported density
$\rho$ exists), and well-posedness results in this class of vector
fields have been investigated in
\cite{Amb-De-Ma,CDLest,del:notes}, also in connection with a
compactness conjecture made by Bressan \cite{Bre:illposed}. See
also \cite{CDL} for a counterexample to the applicability of these
techniques to general multidimensional systems of conservation
laws.

In this paper we obtain a well-posedness result for this class of
vector fields, see Theorem~\ref{t:uni1} for a precise statement.
However, the main result is an improvement of the
$L^1([0,T];BV_\loc)$ condition considered in all previous papers:
going back to the case of the Keyfitz and Kranzer system, it is
clear that we cannot expect this regularity for $\rho$ (and then
for $b=f(\rho)$), unless $\bar\rho\in BV$. More precisely, in the
one-dimensional case and imposing suitable conditions on the flux
function $f$, if $\bar\rho\in L^\infty$ we gain indeed a
regularizing effect (see Ole{\u\i}nik~\cite{Ole}). However, this
provides only $L^1_\loc(]0,T];BV_\loc)$ regularity, and also
(using the equation satisfied by $\rho$) $BV_\loc(]0,T]
\times\R^d)$ regularity. This last assumption on the vector field
is critical in view of a non-uniqueness example provided by Depauw
\cite{Depauw:ex}, where the vector field has precisely this
regularity. Our main result, given in Theorem~\ref{t:uni2}, is
that for this class of vector fields (adding bounds on the
divergence in the same spirit of the nearly incompressibility
condition, see Definition~\ref{d:nearly}) existence and uniqueness
can be restored, provided one works with strongly continuous in
time solutions. Here, of course, the main difficulty is in the
existence part, since standard approximation schemes in general do
not provide this strong continuity property.

The main application of our results concerns the so-called chromatography system. As discussed in Section~\ref{s:compar}, we obtain new well-posedness theorems for this equation. As a byproduct of our analysis, we also have applications to the one-dimensional Keyfitz and Kranzer system, obtaining in particular a well-posedness
theorem for bounded initial data (results in this flavor were
already known: see for example {Freist{\"u}hler~\cite{freistuhler} and Panov~\cite{Panov:onthe}). 

\subsection{Chromatography}\label{s:compar}
Although all our discussion and results can be easily extended to
the $k \times k$ chromatography system (see Remark \ref{rmk:k
times k}), for simplicity of exposition we will always consider
only the case $k=2$. The chromatography equation
\begin{equation}\label{e:chrom}
\begin{cases}
\partial_t u_1 + \partial_x \left( \displaystyle \frac{u_1}{1+u_1+u_2} \right) = 0 \\ \\
\partial_t u_2 + \partial_x \left( \displaystyle \frac{u_2}{1+u_1+u_2} \right) = 0
\end{cases}
\end{equation}
is a system of conservation laws belonging to the so-called Temple class. Consider first a
general system of conservation laws
$$
 \partial_t  U+ \partial_x F (U) = 0 \, ,
\qquad \text{where $(t, x) \in [0, +\infty[ \times \R$ and $U \in \R^k$.}
$$
The associated Cauchy problem is well-posed under the assumption
that the total variation of the initial datum $\bar U$ is sufficiently small (see e.g.~\cite{Bre:book} and the
references therein). If the total variation of $\bar U$ is bounded but large, then the solution may experience blow
up in finite time, as shown for instance by Jenssen~\cite{Jenssen}.

Temple systems were introduced in~\cite{temple} and they are defined by special properties imposed on the structure
of the eigenvector fields of the Jacobian matrix $DF(U)$. Thanks to these features, well-posedness results for Temple
systems are available for a much larger class of initial data compared to general systems of conservation laws.
In particular, Serre \cite{serre} obtained global existence of weak solutions for $2\times 2$ Temple systems with no
smallness assumptions on the total variation of the initial datum. By relying on wave front-tracking techniques,
in \cite{baitibressan}  Baiti and Bressan constructed a Lipschitz continuous semigroup defined on initial data with large
but bounded total variation. The extension of the semigroup to $L^\infty$ initial data was first achieved by Bressan
and Goatin~\cite{bressangoatin} under a nonlinearity assumption on the structure of the eigenvector fields of the
Jacobian matrix $DF(U)$, and then extended by Bianchini in \cite{bianchini} to general Temple class systems, by removing any
nonlinearity assumption. One of the main difficulties faced in \cite{bianchini} is that if the nonlinearity
assumption fails, one cannot hope for a Lipschitz dependence of the semigroup on the initial data. This is shown
by the example discussed in \cite{bressangoatin}, which involves precisely a family of Cauchy problems for the
chromatography system.

The main result in~\cite{bianchini} is the existence of a semigroup $S_t$ of weak solutions defined on
$L^\infty$ initial data and satisfying the following conditions:
\begin{enumerate}
\item $(t ,\bar U) \mapsto S_t \bar U$ is continuous with respect to $\R\times L^1_\loc$ topology, with values in
$L^1_\loc$;
\item If $\bar U$ is piecewise constant, then for $t$ sufficiently small $S_t \bar U$ coincides with the function
obtained by gluing together the ``standard'' solutions constructed as in Lax \cite{lax}.
\end{enumerate}
The proof exploits the so called wave front-tracking algorithm, which relies on the approximation of the initial datum with piecewise constant functions.
In~\cite{bianchini} uniqueness is also obtained: namely, it is shown that there exists a unique semigroup of weak solutions satisfying conditions (1) and (2) above. In addition, the weak solutions provided by the maps $t \mapsto S_t \bar U$
are automatically entropy admissible.

The results in~\cite{bianchini} apply to general systems in the Temple class, while in the present work we
restrict to the chromatography system. The main novelties here are the following.

First, our approach is completely different from the one in~\cite{bianchini}.  Namely, by introducing a change of variables in analogy to Ambrosio and De Lellis~\cite{ambdel} and to Bressan and Shen~\cite{bressanshen},
we split the chromatography system in the coupling between a scalar conservation law and a transport equation, and then we heavily exploit transport equation techniques.

The second point concerns uniqueness: we manage to get it in the classes of functions defined in Section~\ref{s:chrom} without requiring any stability with respect to perturbations in the initial data. Conversely, the approach in~\cite{bianchini} is in the spirit of the Standard Riemann Semigroup (see
Bressan~\cite{bressansrs}): one shows that there exists a unique semigroup of weak solutions satisfying (1) and (2) above, thus one needs to prove stability and the ``right'' behavior
on piecewise constant data to get uniqueness. This does not correspond, a priori, to requiring uniqueness of
weak entropy admissible solutions. Finally, we point out in passing that we manage to relax the hypothesis
of strict hyperbolicity made in~\cite{bianchini}, which fails when the initial data $\bar u_1$ and $\bar u_2$
vanish in some region (see Definition~\ref{d:F}). However, the price we have to pay in this case is the requirement
$\bar u_1 + \bar u_2 \in BV_\loc(\R)$.

The idea of attacking the chromatography system via a change of variables which splits the system into a coupling
between a scalar conservation law and a transport equation came from Bressan and Shen \cite{bressanshen} (see also Panov~\cite[Remark 4]{Panov:onthe}).
In~\cite{bressanshen} the authors are mainly concerned with the study of ODEs with discontinuous vector fields:
they formulate conditions under which uniqueness holds, and they show that the vector field obtained from the
chromatography system after their change of variables satisfies these conditions,
provided the initial data take value in a particular range. In our proof we perform a different change of
variables, which has the advantage of being linear, and hence behaves well under weak convergences.
In particular, we can come back to the original system, obtaining distributional solutions which are
admissible in the sense described in the Appendix. Also, we manage to handle a larger class of initial data.

\smallskip\noindent
{\bf Acknowledgement.} We thank Fabio Ancona and an anonymous
referee for pointing out to us the extension of our results to
$k\times k$ systems, as illustrated in Remark~\ref{rmk:k times k}.

\subsection{Content of the paper}

In Section~\ref{s1} we prove the basic well-posedness results for
the continuity equation, first in a class of bounded nearly
incompressible vector fields having $BV_\loc([0,T]\times\R^d)$
regularity, and then under additional assumptions on the
transported density $\rho$ (see Definition~\ref{d:nearly}) for
bounded vector fields having $BV_\loc(]0,T]\times\R^d)$
regularity. In Section~\ref{s2} we discuss Depauw's example
\cite{Depauw:ex} and its relation with our well-posedness results.
In Section~\ref{s:chrom} we present an application of these
results to the chromathography system. First we classify in
Lemma~\ref{l:entropy} the entropy-entropy flux pairs for this
system, and then we prove two basic existence and uniqueness
results: the first one, Theorem~\ref{t:chrom1}, provides existence
and uniqueness in $L^\infty$ under the assumption that the sum
$\bar u_1+\bar u_2$ of the initial conditions is in $BV_\loc$; the
second one, Theorem~\ref{t:chrom2}, replaces this regularity
condition with $\inf_K\bar u_1+\bar u_2>0$ for any $K\subset\R$
compact. The applications to the Keyfitz and Kranzer system are
discussed in Section~\ref{s:kk}: in Theorem~\ref{t:kk1} we show
existence and uniqueness of strongly continuous renormalized
entropy solutions, when the initial datum $\bar u$ is bounded and
satisfies $\inf_K | \bar u| >0$ for any $K\subset\R$ compact. In
the Appendix we list a few basic facts on scalar conservation laws
needed in the paper.

\subsection{Main notation and conventions}

Finally, we specify some (standard) convention about spaces of time-dependent functions: if $J\subset\R$
is an interval and $X$ is a separable Banach space, by $L^p(J;X)$ (resp. $L^p_\loc(J;X)$) we mean all
measurable functions $u:J\to X$ such that $\|u\|_X\in L^p(J)$ (resp. $L^p(I)$ for all $I\subset J$ compact).
In the particular cases of non-separable spaces $X$, in this paper $X=L^\infty$ or $X=BV$, the definition is the
same, but measurability is understood in a weak sense, using the embedding of these spaces in $L^1_\loc$.

We shall often consider locally bounded (in space and time) distributional solutions to the continuity equation
$\partial_t\rho+\div(b\rho)=0$, with $b$ locally integrable. In this case it is well known  (see for
example Lemma~1.3.3 in \cite{Daf:book}) that the map $t\mapsto\rho(t , \cdot)$ has a unique locally
weakly-$\ast$ continuous representative (i.e. $t\mapsto\int\rho(t,x)\phi(x)\,dx$ is continuous for any
$\phi\in L^\infty$ with compact support) and we shall always work with this representative, improving
in some cases the continuity from weak to strong. We shall use the notation $L^{\infty}-w^{\ast}$,
$L^1-s$ for the weak-$\ast$ and strong topologies, and $L^{\infty}_\loc-w^{\ast}$,  $L^1_\loc-s$ for their local counterparts. Finally, we will often consider the set of functions which are continuous
in time with values either in $L^{\infty}(\R^d)-w^{\ast}$ or in $L^1(\R^d)-s$
(or in their local counterparts).
These spaces are denoted by $\mathcal C^0 \big( [0, + \infty[ ; L^{\infty}(\R^d)-w^{\ast} \big)$
and $\mathcal C^0 \big( [0, + \infty[ ; L^1(\R^d)-s \big)$, respectively.

\section{Well-posedness results for continuity equations}\label{s1}

In this section we study the well-posedness of the continuity equation
\begin{equation}\label{e:cauchy}
\begin{cases}
\partial_t u(t,x) + \div \big( b(t,x) u(t,x) \big) = 0 \\
u(0,x) = \bar u (x) \,.
\end{cases}
\end{equation}
Our main results are presented in Theorems~\ref{t:uni1} and \ref{t:uni2}.

\subsection{A preliminary renormalization lemma}

We first prove a technical lemma of standard flavour, regarding renormalization and strong continuity for solutions $u$ of equation \eqref{e:part} with a $BV$ nearly incompressible vector field, see De Lellis \cite{del:notes} for a systematic treatment of this topic. The main difference is that, for our subsequent discussion, we need to consider functions $q$ which are possibly not bounded away from zero, or even equal to zero on a non-negligible set.
\begin{lemma}\label{l:lift}
Let $b \in BV_\loc \big( [0,+\infty[ \times\R^d ;\R^d \big)$ be a bounded vector field.
Assume that there exists a nonnegative locally bounded function
$$
q \in BV_\loc \big( [0,+\infty[ \times \R^d \big) \cap
\mathcal C^0 \big( [0, + \infty[ ; L^{\infty}_\loc (\R^d) - w^{\ast} \big)
$$
satisfying $|Dq(t,\cdot)|(B_R)\in
L^\infty_\loc\big([0,+\infty[\big)$ for all $R>0$ and
$$
\partial_t q + \div (bq) = 0
$$
in the sense of distributions in $]0,+\infty[  \times \R^d$. Let $u \in L^\infty_\loc \big( [0,+\infty[ \times\R^d)$ be a distributional solution of
\begin{equation}
\label{e:part}
           \partial_t (qu) + \div \big( bqu \big) = 0.
\end{equation}
such that the map $t \mapsto q u(t, \cdot)$ is weakly-$\ast$ continuous in $L^{\infty}_\loc (\R^d)$.
Then:
\begin{enumerate}
\item[(i)] $u$ is a renormalized solution, in the sense that the map
$$
     t \mapsto q(t, \cdot) \beta\big( u(t, \cdot) \big)
$$
provides a distributional solution of
\begin{equation}\label{e:lrin}
\partial_t \big( q \beta(u) \big) + \div \big( b q \beta( u) \big) = 0
\end{equation}
on $]0,+\infty[ \times \R^d$, for any function $\beta \in {\rm Lip}_\loc(\R)$.
\item[(ii)] If in addition the map
$$
t \mapsto q(t,\cdot)
$$
is strongly continuous from $[0,+\infty[$ with values in $L^1_\loc(\R^d)$, then also the map
$$
t \mapsto qu(t,\cdot)
$$
is strongly continuous from $[0,+\infty[$ with values in $L^1_\loc(\R^d)$.
\item[(iii)] Assume that $u_1$ and $u_2 \in L^\infty_\loc \big( [0,+\infty[ \times\R^d)$ are solutions of~\eqref{e:part}
such that there exists $\tau \ge 0$ satisfying
$$
q(\tau,x) u_1(\tau,x) = q(\tau,x) u_2(\tau,x)  \qquad \text{for a.e.~$x \in \R^d$.}
$$
Then for all $t \in [0,+\infty[$ we have
$$
q(t,x) u_1(t,x) = q(t,x) u_2(t,x) \qquad \text{for a.e.~$x \in \R^d$.}
$$
\end{enumerate}
\end{lemma}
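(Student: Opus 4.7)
\emph{Plan of proof.} Each part is proved from its predecessors.

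\emph{(i).} I would follow Ambrosio's commutator argument for $BV$ vector fields \cite{Amb:trabv}, as adapted to the nearly incompressible setting in \cite{Amb-De-Ma,del:notes}. Let $\rho_\eps$ be a symmetric spatial mollifier and put $q^\eps := q \ast_x \rho_\eps$, $w^\eps := (qu) \ast_x \rho_\eps$. Mollifying the two continuity equations yields
\begin{equation*}
\partial_t q^\eps + \div(bq^\eps) = r^\eps, \qquad \partial_t w^\eps + \div(bw^\eps) = R^\eps,
\end{equation*}
with commutators $r^\eps, R^\eps \to 0$ in $L^1_\loc$ by Ambrosio's commutator estimate (applicable since $q$ and $qu$ are bounded and $b$ is $BV_\loc$). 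On $\{q^\eps > 0\}$ set $v^\eps := w^\eps/q^\eps$; since $|qu| \le \|u\|_{L^\infty_\loc} q$ a.e., one has $|v^\eps| \le \|u\|_{L^\infty_\loc}$ there. A direct calculation gives $q^\eps(\partial_t v^\eps + b\cdot\nabla v^\eps) = R^\eps - v^\eps r^\eps$ and then, via the chain rule,
\begin{equation*}
\partial_t\bigl(q^\eps\beta(v^\eps)\bigr) + \div\bigl(bq^\eps\beta(v^\eps)\bigr) = \beta(v^\eps)\, r^\eps + \beta'(v^\eps)\bigl(R^\eps - v^\eps r^\eps\bigr),
\end{equation*}
with the convention $q^\eps\beta(v^\eps) := 0$ off $\{q^\eps > 0\}$. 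The right-hand side tends to $0$ in $L^1_\loc$ thanks to the uniform bounds on $\beta(v^\eps)$ and $\beta'(v^\eps)$, while the left-hand side converges to $q\beta(u)$ in $L^1_\loc$ by dominated convergence: at Lebesgue points where $q>0$ one has $v^\eps \to u$ so $q^\eps\beta(v^\eps) \to q\beta(u)$, and at Lebesgue points where $q=0$ one has $q^\eps \to 0$, whence $q^\eps\beta(v^\eps) \to 0 = q\beta(u)$ by the uniform bound on $\beta(v^\eps)$. The main obstacle, the possible vanishing of $q$, is absorbed by the factor $q^\eps$ out front; no lower bound on $q$ is required.

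\emph{(ii).} Apply (i) with $\beta(s) := s^2$ to obtain that $qu^2$ solves \eqref{e:lrin} and hence admits a weakly-$\ast$ continuous representative in $L^\infty_\loc-w^\ast$. For $t_n \to t$ and a compact $K \subset \R^d$, expand
\begin{equation*}
\|qu(t_n) - qu(t)\|_{L^2(K)}^2 = \int_K q(t_n)\bigl(q(t_n)u(t_n)^2\bigr)\,dx - 2\int_K qu(t_n)\,qu(t)\,dx + \int_K qu(t)^2\,dx.
\end{equation*}
The cross term converges to $\int_K qu(t)^2$ by weak-$\ast$ continuity of $qu$. For the first integral, $q(t_n) \to q(t)$ strongly in $L^1_\loc$ (and boundedly in $L^\infty_\loc$) by hypothesis, while $q(t_n)u(t_n)^2 \weaks q(t)u(t)^2$ by weak-$\ast$ continuity of $qu^2$; the strong--weak product rule yields convergence to $\int_K q(t)^2 u(t)^2 = \int_K qu(t)^2$. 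Hence $\|qu(t_n)\|_{L^2(K)} \to \|qu(t)\|_{L^2(K)}$, which combined with weak-$\ast$ convergence forces strong $L^2(K)$, and thus $L^1(K)$, convergence.

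\emph{(iii).} Set $u := u_1 - u_2$; by linearity it satisfies the hypotheses of (i), so by (i) applied with $\beta(s) = s^2$ the nonnegative function $w := q(u_1-u_2)^2$ is a distributional solution of $\partial_t w + \div(bw) = 0$, and $w(\tau,\cdot) = 0$ a.e. by hypothesis. A standard finite-propagation-speed argument closes the proof: for $x_0 \in \R^d$, $R > 0$, and $L$ a local $L^\infty$ bound on $|b|$ valid on a large enough space-time set, the test function
\begin{equation*}
\phi(s,y) := \eta\bigl(R + L|s-\tau| - |y - x_0|\bigr),
\end{equation*}
with $\eta \ge 0$ smooth, decreasing, vanishing on $(-\infty,0]$, satisfies $\partial_s\phi + b\cdot\nabla\phi \le 0$ for $s \ge \tau$ and $\ge 0$ for $s \le \tau$. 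Testing the equation for $w$ against $\phi$, together with $w(\tau,\cdot) = 0$ and $w, \phi \ge 0$, forces $\int w(s)\phi(s)\,dy \equiv 0$. Varying $x_0$ and letting $R \to \infty$ gives $w \equiv 0$, i.e., $qu_1 = qu_2$ a.e.
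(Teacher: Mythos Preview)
Your approach diverges from the paper's at two key points, and in each case the paper's extra work is not cosmetic.

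\textbf{Part (i).} Your claim that the spatial commutators $r^\eps,R^\eps\to 0$ in $L^1_\loc$ ``by Ambrosio's commutator estimate'' is not correct for $BV$ vector fields. Ambrosio's estimate only gives that the commutators are bounded in measure by $C\,|Db|$, with weak-$*$ limits controlled by $|D^s b|$; the $L^1_\loc$ convergence to zero holds for $W^{1,1}$ fields but fails once $Db$ has a singular part. The full Ambrosio machinery (anisotropic mollifiers plus the rank-one structure) is what forces the limit measure to vanish, and your sketch does not invoke it. The paper sidesteps the whole issue: it packages $B(t,x):=(q,bq)$ as a divergence-free space-time vector field in $BV_\loc$, introduces an artificial time $s$, and observes that $v(s,t,x):=u(t,x)$ solves $\partial_s v+\mathrm{div}_{t,x}(Bv)=0$. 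Since $\mathrm{div}_{t,x}B=0$ is trivially absolutely continuous, Ambrosio's renormalization theorem applies directly to $v$, and translating back gives \eqref{e:lrin}. This trick is what makes the argument work without a lower bound on $q$ and without repeating the anisotropic commutator analysis.

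\textbf{Parts (ii) and (iii).} Granting (i), $q\beta(u)$ solves the continuity equation and hence has a weak-$*$ continuous representative $z(t,\cdot)$; but one only knows $z(t,\cdot)=q(t,\cdot)\beta(u(t,\cdot))$ for \emph{almost every} $t$. Your argument in (ii) needs the identity $q(t_0)z(t_0)=(qu(t_0))^2$ at the \emph{specific} time $t_0$ to conclude $\|qu(t_n)\|_{L^2(K)}\to\|qu(t_0)\|_{L^2(K)}$, and in (iii) you need the representative of $q(u_1-u_2)^2$ to vanish at the specific time $\tau$; neither follows from the a.e.\ identity. The paper flags this explicitly and closes the gap by a truncation: for fixed $t_0$ one extends $b,q,u$ to $t\le t_0$ by freezing them at $t_0$, checks that the extended objects still satisfy the hypotheses (here the assumption $|Dq(t,\cdot)|(B_R)\in L^\infty_\loc$ is used so that $q(t_0,\cdot)\in BV_\loc$), and reapplies (i) on all of $\R_t$. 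This forces $qu^2(t,\cdot)\weaks qu^2(t_0,\cdot)$ as $t\to t_0^+$ for \emph{every} $t_0$, which then feeds into the norm-convergence argument in (ii) and into the weak-$*$ continuity of $q|u_1-u_2|$ needed for the finite-speed estimate in (iii).
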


\begin{proof}
Let us define $B(t,x):=(q(t,x),b(t,x)q(t,x))$.
Then $B \in BV_\loc\big([0,+\infty[\times \R^d ; \R \times \R^d \big)$ and is divergence free in space-time.
We introduce an artificial ``time variable'' $s \in[0,+\infty[$ and observe that the function
$$
v(s,t,x):=u(t,x)
$$
is a distributional solution of
$$
\p_s v + \divtx(B v)=0 \qquad
\text{on $]0,+\infty[_s \times ]0,+\infty[_t \times \R^d_x$.}
$$
Since $B(t,x)$ is an autonomous (i.e. independent of $s$)
divergence free vector field belonging to
$L^\infty \big([0,+\infty[_s ; BV_\loc([0,+\infty[_t \times \R^d_x; \R \times \R^d) \big)$, thanks to Ambrosio's
renormalization theorem (see \cite{Amb:trabv}, or \cite[Section 5]{Amb:cetraro}) the function
$\beta(v(s,t,x))$ solves
$$
\p_s \beta( v) + \divtx(B \beta(v))=0 \qquad \text{on
$]0,+\infty[ \times ]0,+\infty[ \times \R^d$,}
$$
or equivalently
$$
\p_t (q\beta(u)) + \div (bq \beta(u))=0\qquad \text{on
$]0,+\infty[ \times \R^d$.}
$$
This proves (i).

We now show (ii). Applying the result in (i) with $\beta(u) = u^2$, we have that
$q (t, \cdot) \big( u (t, \cdot) \big)^2$ provides a distributional solution of~\eqref{e:lrin}. Hence, there exists
$z \in \mathcal C^0 \big( [0, + \infty [ ; L^\infty_\loc (\R^d) - w^\ast \big)$
such that
$$
     z  (t, \cdot) = q (t, \cdot) \big( u (t, \cdot) \big)^2 \quad  \textrm{a.e.~in $\R^d$,}
$$
but only {\em for almost every} $t \ge 0$. Since $t \mapsto q(t,\cdot)$ is strongly continuous
with values in $L^1_\loc(\R^d)$, it follows that
$$
q(t,\cdot) z(t,\cdot) \in \mathcal C^0 \big( [0, + \infty [ ; L^\infty_\loc (\R^d) -w^\ast \big) \,.
$$
Notice that the difficulty in the proof of the strong continuity of $t \mapsto q(t,\cdot)u(t,\cdot)$ comes from the fact that it is not a priori obvious that the equality
$$
q(t,\cdot) z(t,\cdot) = \big( q (t, \cdot)  u (t, \cdot) \big)^2 \,,
$$
which is true {\em for almost every} $t \ge 0$, in fact holds {\em for every} $t \ge 0$. This will be shown by truncating the equation at an arbitrary time.

Let us fix $t_0 \in [0,+\infty[$: our goal is showing that
$t \mapsto qu(t,\cdot)$ is strongly continuous in $L^1_\loc(\R^d)$ at
$t_0$. Let us define the functions
$$
b_{t_0}(t,x):=
\left\{
\begin{array}{ll}
0&\text{if }t \in]-\infty, t_0]\\
b(t,x)&\text{if }t \in ]t_0,+\infty[\,,\\
\end{array}
\right.
$$
$$
q_{t_0}(t,x):=
\left\{
\begin{array}{ll}
q(t_0,x)&\text{if }t \in]-\infty, t_0]\\
q(t,x)&\text{if }t \in ]t_0,+\infty[\,,\\
\end{array}
\right.
$$
$$
u_{t_0}(t,x):=
\left\{
\begin{array}{ll}
u(t_0,x)&\text{if }t \in]-\infty, t_0]\\
u(t,x)&\text{if }t \in ]t_0,+\infty[\,\\
\end{array}
\right.
$$
and notice that $q(t_0,x)\in BV_\loc(\R^d)$ by the
weak continuity of $t\mapsto q(t,\cdot)$ and the uniform in time and
local in space bound on $|Dq(t,\cdot)|$. Thanks to the weak
continuity of $q$ and $qu$ we get
$$
\partial_t q_{t_0} + \div \big( b_{t_0}q_{t_0} \big) = 0 \quad \text{ and }\quad
\partial_t \big( q_{t_0} u_{t_0} \big)  + \div \big( b_{t_0}q_{t_0}u_{t_0} \big) = 0
$$
in the sense of distributions on $\R \times \R^d$. By applying the
same strategy as in the proof of (i) with the vector field
$B_{t_0}:=(q_{t_0},b_{t_0}q_{t_0})$, we deduce that
$$
\partial_t  \big( q_{t_0} u^2_{t_0} \big)  + \div \big( b_{t_0} q_{t_0} u^2_{t_0} \big) = 0\qquad \text{on
}\R \times \R^d
$$
in the sense of distributions. Combining this with the fact that
the map $t\mapsto q_{t_0}(t,\cdot) u^2_{t_0} (t,\cdot)$ is constant and equal to
$\big( q u^2 \big) (t_0,\cdot)$ for $t \leq t_0$, we easily deduce that
$$
q u^2 (t,\cdot) \weaks q u^2 (t_0,\cdot)
$$
weakly-$\ast$ in $L^\infty_\loc(\R^d)$ as $t \to t_0^+$, and thanks to the strong continuity
of $t \mapsto q(t,\cdot)$ this implies
\begin{equation}\label{e:strong}
q^2 u^2 (t,\cdot) \weaks q^2 u^2 (t_0,\cdot)
\end{equation}
weakly-$\ast$ in $L^\infty_\loc(\R^d)$ as $t \to t_0^+$.

From \eqref{e:strong} combined with the weak-$\ast$ continuity in $L^\infty_\loc(\R^d)$ of the map $t \mapsto qu(t,\cdot)$, we deduce that
$$
qu(t,\cdot) \to qu(t_0,\cdot)
$$
strongly in $L^1_\loc(\R^d)$ as $t \to t_0^+$. This proves the right continuity at $t_0$. The proof of the left continuity is analogous.

Finally, to show (iii), we observe that $v_1(s,t,x):=u_1(t,x)$ and
$v_2(s,t,x):=u_2(t,x)$ are both solutions of
$$
\p_s v + \divtx(B v)=0 \qquad \text{on }]0,+\infty[_s \times
]0,+\infty[_t \times \R_x^d.
$$
Hence, by Ambrosio's renormalization theorem, $|v_1 - v_2|$
solves
$$
\p_s \big( |v_1-v_2| \big) + \divtx \big( B |v_1-v_2| \big)=0 \qquad \text{on
}[0,+\infty[_s \times [0,\infty[_t \times \R_x^d,
$$
or equivalently
\begin{equation}
\label{eq:renorm} \p_t \bigl(q|u_1 - u_2|\bigr) + \div
\bigl(bq|u_1-u_2|\bigr)=0.
\end{equation}
Moreover, by considering the functions $b_{t_0}$, $q_{t_0}$ and
$(u_2-u_1)_{t_0}$ as in the proof of (ii), we can apply as above
the renormalization with $\beta(u)=|u|$ to obtain
$$
\p_t \bigl(q_{t_0}|(u_1 - u_2)_{t_0}|\bigr) + \div
\bigl(b_{t_0}q_{t_0}|(u_1-u_2)_{t_0}|\bigr)=0,
$$
which as above implies that the map
\begin{equation}
\label{eq:weak cont} t \mapsto q(t,\cdot) | u_1(t,\cdot) -
u_2(t,\cdot)| \in L^\infty_{\loc} (\R^d)
\end{equation}
is weakly-$\ast$ continuous. Hence, arguing as in~\cite[Lemma
3.17]{del:notes}, \eqref{eq:renorm} and \eqref{eq:weak cont} imply
$$
\int_{B_R(0)}q(t,x)|u_1(t,x) - u_2(t,x)|\,dx \leq
\int_{B_{R+|t-s|\|b\|_{\infty}}(0)}q(s,x)|u_1(s,x) - u_2(s,x)|\,dx
$$
for all $t,s>0$.
In particular, by setting $s=\tau$, we obtain the desired result.
\end{proof}

\begin{remark} Assume that in Lemma~\ref{l:lift} we replace the hypothesis of strong continuity of $t \mapsto q(t,\cdot)$ with the condition that for every $t$
$$
    \frac{1}{C} \leq q(t, x) \leq C \quad \textrm{a.e. $x \in \R^d$}
$$
for some constant $C>0$.
Then the map $t \mapsto u(t,\cdot)$ is strongly continuous in $L^1_\loc$. This is shown in De Lellis \cite[Corollary 3.14]{del:notes}.
\end{remark}

\subsection{Well-posedness of weakly continuous solutions}

In this section we prove a well-posedness result for~\eqref{e:cauchy}
in a function space adapted to the vector field $b$: this space depends
on an auxiliary function $p$ linked to $b$ by~\eqref{amb1}.

\begin{definition}\label{d:spazione} Let $\Omega$ be an open set in an Euclidean space and
let $p \in L^\infty_\loc \big( \Omega )$ be nonnegative.
We denote by $L^\infty(p)$ the set consisting of the measurable functions $w$ defined on
$\Omega$ such that there exists a constant $C$ satisfying
$$
|w (y) | \leq C p(y) \qquad \text{for a.e.~$y \in \Omega$.}
$$
The smallest constant $C$ will be denoted by $\|w\|_{L^\infty(p)}$.
\end{definition}

\begin{remark}\label{rmk:wp} Notice that, for any $w \in L^\infty (p)$, we necessarily have
$w=0$ a.e.~on $\{p=0\}$. Hence, by adopting the convention that $\frac{w(y)}{p(y)}=0$ if
$p(y)=0$, the quotient
$$
\frac{w(y)}{p(y)}
$$
is well-defined on $\Omega$ and is a bounded measurable function satisfying $(w/p)\cdot p=w$ a.e.
\end{remark}

\begin{theorem}\label{t:uni1} Let $b \in BV_\loc \big( [0,+\infty[ \times\R^d ;\R^d \big)$
be a bounded vector field. Assume that there exists a nonnegative
locally bounded function $p \in BV_\loc \big( [0,+\infty[ \times
\R^d \big)$ satisfying $|Dp(t,\cdot)|(B_R)\in
L^\infty_\loc\big([0,+\infty[\bigr)$ for all $R>0$ and
\begin{equation}\label{amb1}
\partial_t p + \div (bp) = 0
\end{equation}
in the sense of distributions. Then:
\begin{enumerate}
\item[(i)] For any initial datum $\bar u \in L^\infty(p(0,\cdot))$ there exists a solution {$u \in
{\mathcal C}^0 \big( [0,+\infty[ ; L^\infty(\R^d)-w^* \big) \cap
L^\infty(p)$} to the Cauchy problem~\eqref{e:cauchy}.
\item[(ii)] The solution $u$ to \eqref{e:cauchy} is unique in the class
${\mathcal C}^0 \big( [0,+\infty[ ; L^\infty(\R^d)-w^* \big) \cap
L^\infty(p)$.
\item[(iii)] $\|u(t,\cdot)\|_{L^\infty (p(t, \cdot))}\leq\|\bar u\|_{L^\infty(p(0, \cdot))}$ for all $t\geq 0$.
\end{enumerate}
\end{theorem}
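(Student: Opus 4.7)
The plan is to reduce Theorem~\ref{t:uni1} to Lemma~\ref{l:lift} applied with $q = p$, through the pointwise factorization $u = pw$ where $w := u/p$ is the quotient defined via the convention of Remark~\ref{rmk:wp}. This produces a bounded measurable function $w$ with $\|w\|_{L^\infty} = \|u\|_{L^\infty(p)}$ and $pw = u$ a.e. The density $p$ fulfills all the hypotheses imposed on $q$ in Lemma~\ref{l:lift}: a bounded distributional solution of the continuity equation automatically admits a weakly-$*$ continuous representative (the fact recalled in the introduction), while the $BV_\loc$ regularity and the uniform control on $|Dp(t,\cdot)|(B_R)$ are part of the hypotheses.

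For uniqueness (ii), let $u_1, u_2$ be two solutions in the prescribed class sharing the same initial trace $\bar u$. Writing $u_i = pw_i$ with $w_i := u_i/p \in L^\infty_\loc$, the relation $pw_i = u_i$ places $w_i$ in the framework of Lemma~\ref{l:lift}: the map $t \mapsto pw_i(t,\cdot) = u_i(t,\cdot)$ is weakly-$*$ continuous by hypothesis, and $\partial_t(pw_i) + \div(bpw_i) = 0$. Since $pw_1(0,\cdot) = pw_2(0,\cdot) = \bar u$, part (iii) of the lemma applied at $\tau = 0$ yields $pw_1(t,\cdot) = pw_2(t,\cdot)$ for every $t \geq 0$, i.e.~$u_1 \equiv u_2$.

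For the bound (iii), set $M := \|\bar u\|_{L^\infty(p(0,\cdot))}$, so that $|w(0,\cdot)| \leq M$ a.e.~on $\{p(0,\cdot) > 0\}$ by definition, and trivially on $\{p(0,\cdot) = 0\}$ where $w = 0$ by convention. Choose $\beta(r) := (|r|-M)^+ \in \lip_\loc(\R)$, which vanishes on $[-M,M]$. By Lemma~\ref{l:lift}(i), $p\beta(w)$ is a distributional solution of the continuity equation vanishing a.e.~at $t=0$. Comparing it with the identically zero solution via Lemma~\ref{l:lift}(iii) forces $p(t,\cdot)\beta(w(t,\cdot)) = 0$ a.e.~for every $t \geq 0$, yielding $|w(t,\cdot)| \leq M$ on $\{p(t,\cdot) > 0\}$ and hence $|u(t,\cdot)| \leq Mp(t,\cdot)$ a.e.

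Existence (i) is the main obstacle, since Lemma~\ref{l:lift} supplies only uniqueness and stability. I would attack it by approximation in the spirit of the nearly incompressible theory of \cite{Amb-De-Ma,CDLest,del:notes}: mollify $b \mapsto b_\eps$, $\bar u \mapsto \bar u_\eps$ and $p(0,\cdot) \mapsto p_{0,\eps}$ with the compatibility $|\bar u_\eps| \leq (M + o(1)) p_{0,\eps}$, and solve the classical Cauchy problems for the smooth field $b_\eps$ to obtain $u_\eps$ and $p_\eps$. Since the ratio $u_\eps/p_\eps$ is transported along the classical flow of $b_\eps$, the pointwise bound $|u_\eps| \leq (M + o(1)) p_\eps$ propagates in time. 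Extracting weak-$*$ limits, the candidates $u, p^*$ solve the continuity equation with $b$ and have the correct initial traces. The delicate point is the identification $p^* = p$, needed to transfer the approximate bound into $|u| \leq Mp$: this can be achieved by appealing to the uniqueness of the bounded transported density for $BV$ nearly incompressible vector fields established in \cite{Amb-De-Ma} (equivalently, by applying the already-proved part (ii) to the pair $p, p^*$ after verifying $p^* \in L^\infty(p)$). Once the bound is secured, selecting the weakly-$*$ continuous representative completes the construction.
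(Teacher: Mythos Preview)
Your uniqueness argument (ii) matches the paper's exactly, and your proof of (iii) via the renormalization function $\beta(r)=(|r|-M)^+$ combined with Lemma~\ref{l:lift}(iii) is a correct alternative to the paper's route, where the bound is instead inherited directly from the approximating sequence.

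The existence argument, however, has a genuine gap at precisely the point you flag as ``delicate''. If you mollify $b$ directly and then \emph{solve} the continuity equation to produce $p_\eps$, there is no mechanism forcing the weak-$*$ limit $p^*$ to coincide with the given $p$. Neither of your proposed fixes closes this: the near-incompressibility uniqueness results of \cite{Amb-De-Ma,del:notes} require a transported density bounded above and below by positive constants, whereas here $p$ may vanish on a set of positive measure; and invoking the already-proved part (ii) for the pair $(p,p^*)$ presupposes $p^*\in L^\infty(p)$, which is exactly what is unknown. Without $p^*=p$ you obtain only $|u|\leq M p^*$, so $u$ need not lie in $L^\infty(p)$ at all.

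The paper sidesteps this entirely through the choice of regularization. Take $\eta_\eps$ a kernel supported on the whole space (a Gaussian), so that $p_\eps:=p*\eta_\eps>0$ everywhere, and set
\[
b_\eps:=\frac{(bp)*\eta_\eps}{p*\eta_\eps}\,.
\]
Because $\partial_t p+\div(bp)=0$, one has $\partial_t p_\eps+\div(b_\eps p_\eps)=0$ \emph{identically}: the approximate density is the mollification of $p$ itself and hence converges strongly to $p$, with no identification step required. One then solves the transport equation $\partial_t\lambda_\eps+b_\eps\cdot\nabla\lambda_\eps=0$ with initial datum $(\bar u*\eta_\eps)/(p(0,\cdot)*\eta_\eps)$, which is bounded by $M$; the maximum principle gives $\|\lambda_\eps\|_{L^\infty}\leq M$ for all time, and any weak-$*$ limit $u$ of $\lambda_\eps p_\eps$ lands directly in $L^\infty(p)$ with the bound (iii) built in.
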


\begin{proof}
Properties (i), (ii) and (iii) are trivially satisfied when $p$ is identically zero. Hence in the following, without any loss of generality, we assume that $p$ is not identically zero.
{\sc Existence.} Let us define $\l_0:= \bar u / p(0,\cdot)$ (see Remark~\ref{rmk:wp}). Roughly
speaking, the strategy to construct a solution is to solve
$$
\left\{
\begin{array}{l}
\p_t \l + b \cdot \nabla \l=0\\
\l(0,\cdot)=\l_0\,,
\end{array}\right.
$$
and to define $u=\l p$. Since $b$ is not smooth, we need a
regularization argument.

We first remark that the assumption that $p$ is nonnegative and
not identically zero implies that $p(t,\cdot)$ is not identically zero
for any $t > 0$. Indeed, assume by contradiction that
$$
p(\tau, x) =0 \qquad \textrm{for a.e. $x \in \R^d$}
$$
for some $\tau \ge 0$. Then by applying Lemma~\ref{l:lift}(iii) with  $q= p$, $u_1 \equiv 1$ and $u_2 \equiv 0$
we obtain that for every $t \ge 0$
$$
p(t, x) = 0  \qquad \textrm{for a.e. $x \in \R^d$},
$$
against our assumption.

We now consider a sequence $\eta_\eps$ of convolution kernels supported
on the whole $\R^d$ (Gaussian kernels, for instance), and we define
$$
p_\eps:=p*\eta_\eps \,, \qquad \qquad
b_\eps:=\frac{(bp)*\eta_\eps}{p*\eta_\eps}\,.
$$
Then $p_\eps$ is smooth and strictly
positive everywhere, so that $b_\eps$ is smooth and bounded. We now solve
for every $\eps > 0$ the Cauchy problem
$$
\left\{
\begin{array}{l}
\p_t \l_\eps + b_\eps \cdot \nabla \l_\eps=0 \\ \\
\l_\eps(0,\cdot)= \displaystyle \frac{\bar u*\eta_\eps}{p(0,\cdot)*\eta_\eps}\,.
\end{array}\right.
$$
Then, using the identity $\p_tp_\eps+\div (b_\eps p_\eps)=0$, one can easily
check that
$$
\left\{
\begin{array}{l}
\p_t (\l_\eps p_\eps) + \div( b_\eps \l_\eps p_\eps)=0 \\
\l_\eps p_\eps(0, \cdot)=\bar u*\eta_\eps\,.
\end{array}\right.
$$
Since
\begin{equation}
\label{eq:Linfty bound}
\|\l_\eps(t,\cdot)\|_{L^\infty(\R^d)} \leq \|\l_\eps(0,\cdot)\|_{L^\infty(\R^d)}
\leq \| \bar u \|_{L^\infty(p(0,\cdot))}\qquad \text{for every $t \in [0,+\infty[$,}
\end{equation}
the functions $\l_\eps$ are uniformly bounded in $L^\infty \big( [0,+\infty[ \times \R^d \big)$. Hence, up to subsequences, $\l_\eps$ converges weakly-$\ast$ in $L^\infty \big( [0,+\infty[ \times \R^d \big)$ to a function $\l \in L^\infty \big( [0,+\infty[ \times \R^d \big)$. Observing that $p_\eps \to p$ in $L^1_{\loc} \big( [0,+\infty[ \times \R^d \big)$, we get that
$$
\l_\eps p_\eps \weaks \l p: = u \qquad \text{ weakly-$\ast$ in
$L^\infty_{\loc} \big( [0,+\infty[ \times \R^d \big)$,}
$$
and $u$ solves \eqref{e:cauchy}. Moreover, thanks to \eqref{eq:Linfty bound}, (iii) holds.
Finally, the fact that $\l_\eps p_\eps$ are
uniformly continuous in time with respect to the weak-$*$ topology of
$L^\infty_{\loc}(\R^d)$ (see for instance~\cite[Lemma 8.1.2]{ags:book}) implies that
$u$ belongs to ${\mathcal C}^0 \big( [0,+\infty[ ; L^\infty(\R^d)-w^* \big)$ and $u(0,\cdot)=\bar u$. This proves (i).

{\sc Uniqueness.} Let $u_1$, $u_2 \in L^\infty (p)$ be two
solutions of \eqref{e:cauchy}. We apply Lemma~\ref{l:lift}(iii) with $q=p$ and $\tau=0$ to the functions $u_1 / p$ and $u_2 / p$ (remember Remark~\ref{rmk:wp}), obtaining that, for all $t \in [0,+\infty[$,
$$
p(t,x) \frac{u_1(t,x)}{p(t,x)} = p(t,x) \frac{u_2(t,x)}{p(t,x)} \qquad \text{for a.e.~$x \in \R^d$.}
$$
Recalling again Remark~\ref{rmk:wp}, this gives that for all
$t \in [0,+\infty[$ the equality $u_1(t,x) = u_2(t,x)$ holds for a.e.~$x \in \R^d$, proving (ii).
\end{proof}

\subsection{Well-posedness of strongly continuous solutions}\label{ss:strongtr}

In this section we weaken the assumption $b \in BV_\loc \big( [0,+\infty[ \times\R^d ;\R^d \big)$
replacing it by $b \in BV_\loc \big( ]0,+\infty[ \times\R^d ;\R^d \big)$ and, by adding some
conditions on the density $\rho$ transported by $b$ as in \eqref{amb1}, we can still get
well-posedness of \eqref{e:cauchy} in the class of strongly continuous solutions.

\begin{definition}\label{d:nearly} Let $b : [0,+\infty[ \times \R^d \to \R^d$ be a
bounded vector field. We say that $b$ is
{\em locally strongly nearly incompressible} if there exists a
function $\rho : [0,+\infty[ \times \R^d \to \R$ such that the
following properties are satisfied:
\begin{enumerate}
\item $t \mapsto \rho(t,\cdot)$ is strongly continuous with values in
$L^1_{\loc}(\R^d)$;
\item For every $R>0$ and $T>0$ there exists a
constant $C_{R,T}>0$ such that
\begin{equation}\label{e:rhobound}
\frac{1}{C_{R,T}} \leq \rho(t,x) \leq C_{R,T} \qquad \text{for a.e. $(t,x) \in [0,T] \times B_R(0)$;}
\end{equation}
\item The equation
\begin{equation}\label{e:rhoeq}
\partial_t \rho(t,x) + \div \big( b(t,x) \rho(t,x) \big) = 0
\end{equation}
holds in the sense of distributions on $]0,+\infty[ \times \R^d$.
\end{enumerate}
\end{definition}

\begin{theorem}\label{t:uni2} Let $b \in BV_\loc \big( ]0,+\infty[\times \R^d ;\R^d \big)$
satisfy the assumptions of Definition~\ref{d:nearly}, for some
function $\rho$ which in addition  belongs to
$BV_{\loc}\big(]0,+\infty[ \times \R^d \big)$ and
satisfies $|D\rho(t,\cdot)|(B_R)\in
L^\infty_\loc\big(]0,+\infty[\bigr)$ for all $R>0$. Then there
exists a locally bounded solution $u \in \mathcal C^0 \big(
[0,+\infty[ ; L^1_\loc(\R^d)-s \big)$ to the Cauchy
problem~\eqref{e:cauchy}. Furthermore the solution is unique in this
class.
\end{theorem}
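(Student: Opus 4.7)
The plan is to prove uniqueness by a weighted $L^1$ contraction on intervals $[\tau,+\infty[$ and then let $\tau\to 0^+$; for existence, to regularize the data by a time shift, extract a weak-$*$ limit, and upgrade continuity at $t=0$ by an energy identity coming from the renormalization of Lemma~\ref{l:lift}(i).

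\emph{Uniqueness.} Let $u_1,u_2$ be two strongly $L^1_\loc$-continuous solutions with $u_i(0,\cdot)=\bar u$. For any $\tau>0$ the pair $(b,\rho)$ satisfies the hypotheses of Lemma~\ref{l:lift} on $[\tau,+\infty[$; applying the $L^1$ contraction proved at the end of item (iii) with $q=\rho$ to the pair $u_1/\rho,\,u_2/\rho$ (and using $|u_1-u_2|=\rho\,|u_1/\rho-u_2/\rho|$) yields
\[
\int_{B_R}|u_1(t,x)-u_2(t,x)|\,dx\leq\int_{B_{R+(t-\tau)\|b\|_\infty}}|u_1(\tau,x)-u_2(\tau,x)|\,dx
\]
for every $t\geq\tau>0$. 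Strong continuity at $t=0$ lets me send $\tau\to 0^+$, forcing the right-hand side to zero, so $u_1\equiv u_2$.

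\emph{Existence.} Set $b^\delta(t,x):=b(t+\delta,x)$ and $\rho^\delta(t,x):=\rho(t+\delta,x)$. These now lie in $BV_\loc([0,+\infty[\times\R^d)$, $\rho^\delta$ stays strongly $L^1_\loc$-continuous and satisfies two-sided bounds on compact sets, and $(b^\delta,\rho^\delta)$ still solves the continuity equation, so Theorem~\ref{t:uni1} with $p=\rho^\delta$ produces a weakly-$*$ continuous $u^\delta$ with $u^\delta(0,\cdot)=\bar u$ and a $\delta$-uniform $L^\infty$ bound (noting that $L^\infty(\rho^\delta(0,\cdot))=L^\infty(\R^d)$ with uniformly equivalent norms thanks to the two-sided bounds on $\rho$). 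Lemma~\ref{l:lift}(ii), now legitimately applied on $[0,+\infty[$, makes each $u^\delta$ strongly continuous up to $t=0$. The uniform $L^\infty$ bound combined with the equation gives equicontinuity of $\{u^\delta\}$ with values in $L^\infty_\loc-w^*$, so an Ascoli argument extracts a subsequence converging to some $u\in\mathcal C^0([0,+\infty[;L^\infty_\loc-w^*)$; since $b^\delta\to b$ strongly in $L^1_\loc$ and $u^\delta\weaks u$, passage to the limit yields a distributional solution on $[0,+\infty[\times\R^d$ with $u(0,\cdot)=\bar u$, and Lemma~\ref{l:lift}(ii) applied with $q=\rho$ on each $[\tau,+\infty[$ then gives strong $L^1_\loc$-continuity on $]0,+\infty[$.

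The main obstacle is strong continuity of $u$ at $t=0$, where I would exploit the renormalization $\beta(s)=s^2$: Lemma~\ref{l:lift}(i) applied to $u^\delta/\rho^\delta$ yields that $w^\delta:=(u^\delta)^2/\rho^\delta$ solves the continuity equation with velocity $b^\delta$ on $[0,+\infty[$. For $\phi\in C_c^\infty(\R^d)$ with $\phi\geq 0$, the map $F^\delta(t):=\int w^\delta(t,\cdot)\phi\,dx$ is therefore Lipschitz with constant uniform in $\delta$ (controlled by $\|b\|_\infty$, $\|\nabla\phi\|_{L^1}$, $\|\bar u\|_\infty$ and the lower bound on $\rho$), and $F^\delta(0)=\int\bar u^2\phi/\rho(\delta,\cdot)\,dx\to\int\bar u^2\phi/\rho(0,\cdot)\,dx$. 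Weak lower semicontinuity of the $L^2$-norm applied to $u^\delta(t)\sqrt{\phi/\rho^\delta(t)}$, which converges weakly in $L^2_\loc$ to $u(t)\sqrt{\phi/\rho(t)}$ by combining strong convergence of $1/\sqrt{\rho^\delta(t)}$ with the weak-$*$ convergence of $u^\delta(t)$, gives
\[
\int\frac{u(t)^2}{\rho(t)}\phi\,dx\leq\liminf_{\delta\to 0}F^\delta(t)\leq\int\frac{\bar u^2}{\rho(0,\cdot)}\phi\,dx+Ct.
\]
A second application of weak lower semicontinuity in the limit $t\to 0^+$ (using weak-$*$ continuity of $u$ and strong continuity of $\rho$ at $0$) produces the matching lower bound, so $\int u(t)^2\phi/\rho(t)\,dx\to\int\bar u^2\phi/\rho(0,\cdot)\,dx$. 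Expanding
\[
\int\frac{(u(t)-\bar u)^2}{\rho(t)}\phi\,dx=\int\frac{u(t)^2}{\rho(t)}\phi\,dx-2\int u(t)\,\frac{\bar u\,\phi}{\rho(t)}\,dx+\int\frac{\bar u^2\phi}{\rho(t)}\,dx,
\]
the cross and last terms converge by strong--weak duality (strong $L^1_\loc$ convergence of $\bar u\phi/\rho(t)$ and $\bar u^2\phi/\rho(t)$ against weak-$*$ convergence $u(t)\to\bar u$), and all three contributions cancel. Since $\rho$ is bounded above on compact sets, this forces $u(t)\to\bar u$ in $L^2_\loc$ and hence in $L^1_\loc$, closing the existence proof.
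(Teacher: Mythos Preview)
Your uniqueness argument is essentially identical to the paper's: renormalize with $\beta(z)=|z|$ on $[\tau,+\infty[$, get the $L^1$ cone contraction, and send $\tau\to 0^+$ using strong continuity of both solutions at $t=0$.

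Your existence argument, however, follows a genuinely different route. The paper regularizes \emph{spatially}: it mollifies $\rho$ and $b$ by convolution, builds the smooth flows $Y_\eps,Z_\eps$ explicitly, writes $\lambda_\eps(t,x)=\bar\lambda_\eps(Z_\eps(t,x))$, and proves strong continuity at $t=0$ by a direct estimate on $\int|\bar\lambda_\eps(Z_\eps(t,x))-\bar\lambda_\eps(x)|\,dx$ using the Jacobian bounds and an approximation of $\bar u/\rho(0,\cdot)$ by Lipschitz functions. You instead regularize \emph{temporally} by the shift $b^\delta(t,\cdot)=b(t+\delta,\cdot)$, invoke Theorem~\ref{t:uni1} to build $u^\delta$ directly, and recover strong continuity at $t=0$ via an $L^2$ energy identity: pass $w^\delta=(u^\delta)^2/\rho^\delta$ to the limit using the renormalization of Lemma~\ref{l:lift}(i), match the $\limsup$ (from the uniform Lipschitz bound on $t\mapsto\int w^\delta\phi$) with the $\liminf$ (from weak $L^2$ lower semicontinuity), and expand the square. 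This avoids the explicit flow machinery and the modulus-of-continuity bookkeeping of the paper's Step~3, trading it for a softer compactness/semicontinuity argument; the paper's approach in turn gives slightly more quantitative control on the modulus of continuity at $0$.

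One point needs care: you assert that $L^\infty(\rho^\delta(0,\cdot))=L^\infty(\R^d)$ with $\delta$-uniformly equivalent norms, and likewise that the Lipschitz constant of $F^\delta$ is uniform in $\delta$. Both rely on two-sided bounds on $\rho$ that Definition~\ref{d:nearly} only provides \emph{locally} (the constants $C_{R,T}$ depend on $R$). For general $\bar u\in L^\infty_\loc$ this identification fails, and Theorem~\ref{t:uni1} cannot be applied as stated. The paper handles exactly this issue in its Step~2 by decomposing $\bar u=\sum_i\bar u\chi_{Q_i}$ over a covering by unit cubes and using finite propagation speed, so each piece sees only local bounds on $\rho$. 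Your argument goes through once you insert the same localization; without it the application of Theorem~\ref{t:uni1} and the uniformity in $\delta$ of the various constants are not justified.
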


\begin{proof}
{\sc Uniqueness.} Let $u_1$, $u_2 \in \mathcal C^0 \big( [0,+\infty[ ; L^1_\loc(\R^d)-s \big)$
be locally bounded solutions of \eqref{e:cauchy}. We first observe that, from the strong continuity, it follows that
\begin{equation}\label{e:strongdata}
\int_{B_S(0)} | u_1(\eps, x) - u_2(\eps,x) | \, dx \to 0 \qquad \text{ as $\eps \to 0^+$}
\end{equation}
for every $S>0$.

We now apply Lemma \ref{l:lift}(i) with $\beta(z)=|z|$
and $q=\rho$ to the function $u=(u_1-u_2)/\rho$, on the time
interval $[\tau,+\infty[$, for an arbitrary $\tau > 0$, to deduce
that
\begin{equation}\label{e:rr}
\partial_t \big( |u_1 - u_2| \big) + \div \big( b  |u_1 - u_2| \big) = 0
\end{equation}
in the sense of distributions in $]0,+\infty[ \times \R^d$. Arguing as in~\cite[Lemma 3.17]{del:notes},
by \eqref{e:rr} we easily obtain that for any $0< \eps < t \leq T < \infty$ and for any $R>0$, there holds
\begin{equation}\label{e:fin}
\int_{B_R(0)}  | u_1(t,x) - u_2(t,x) | \, dx \leq
\int_{B_{R + T \|b\|_\infty}(0)}  |u_1(\eps, x) - u_2(\eps,x)| \, dx \,.
\end{equation}
By letting $\eps \to 0^+$ and recalling \eqref{e:strongdata}, thanks to \eqref{e:fin}  we get that,
for any $t \in [0,+\infty[$,
$$
u_1(t,x) = u_2(t,x) \qquad \text{ for a.e.~$x\in B_R(0)$.}
$$
Since $R$ is arbitrary, we conclude that $u_1=u_2$.

{\sc Existence.} The proof is organized in three steps.

{\sc Step 1.} We describe the issues we have to address and we sketch how we will proceed in the remaining two steps.

Under the assumptions of the theorem, we can apply Lemma~\ref{l:lift}(ii) to the solution $u/\rho$ on the domain $[\tau, +\infty[ \times \R^d$ for any $\tau >0$ and deduce that, for every locally bounded solution $u$ of \eqref{e:cauchy}, the map
$$
t \mapsto u(t,\cdot)
$$
is strongly continuous from $]0,+\infty[$ with values in $L^1_\loc (\R^d)$. Thus the only issue is constructing a solution which is also strongly continuous at $t=0$.

The strategy is first approximating in $L^1_{\mathrm{loc}}$ our
vector field $b$ with smooth strongly nearly incompressible vector
fields $b_\eps$ with corresponding smooth densities $\rho_\eps$.
Then we observe that, if $z_\eps$ is a solution to the continuity
equation
$$
\left\{
\begin{array}{l}
\p_t z_\eps + \div (b_\eps z_\eps)=0\\
z_\eps(0, \cdot)=\bar u *\eta_\eps\,,
\end{array} \right.
$$
then $\lambda_\eps: =  z_\eps /  \rho_\eps$ is a solution to the
transport equation
$$
\left\{
\begin{array}{l}
\partial_t \l_\eps + b_\eps \cdot \nabla \lambda_\eps=0 \\
\lambda_\eps(0, \cdot) : =\displaystyle \frac{\bar u*\eta_\eps}{\rho_\eps(0, \cdot)}\,.
\end{array} \right.
$$
Noticing that $\rho_\eps(t, \cdot )$ is
strongly continuous by assumption, we only need to prove the
continuity at $0$ of $t \mapsto \l_\eps(t, \cdot)$. This can be
directly shown by using the representation formula for the solution
and exploiting the local strongly nearly incompressibility (see Definition~\ref{d:nearly}). Finally,
we exploit the weak-$\ast$ compactness of $z_\eps$, we let
$\eps \to 0$, and we prove that any limit point of $z_\eps$ provides a strongly
continuous solution.

Another issue we need to address is that the vector field $b$ is only \emph{locally} nearly incompressible, meaning that the bounds~\eqref{e:rhobound} are not necessarily global. We thus introduce a localization argument,
by first restricting to initial data with compact support: because of the finite propagation speed,
the solution we construct has then compact support for every $t \ge 0$. We finally obtain the solution for general initial data by gluing together compactly supported solutions: everything works because what we end up with is a locally finite sum.

{\sc Step 2.} Here, we describe in detail the localization argument.

We introduce a disjoint covering $
\bigcup_{i =1}^{+ \infty} Q_i$ of almost all of $\R^d$,
where $\{ Q_i \}$ are open $d$-dimensional cubes with unit edge length.
Let $\bar u$ as in~\eqref{e:cauchy}, then
$
     \bar u = \sum_{i =1}^{+ \infty}  \bar u_i,
$
where $\bar u_i = \bar u \chi_{ Q_i}$ and $\chi_{ Q_i}$ denotes the characteristic function
of the set $Q_i$. In Step 3 we construct a solution of the Cauchy problem
\begin{equation}
\label{e:cauchy1}
\begin{cases}
\partial_t u_i (t,x) + \div \big( b(t,x) u_i (t,x) \big) = 0 \\
u_i (0,x) = \bar u_i (x)
\end{cases}
\end{equation}
satisfying $u_i  \in \mathcal C^0 \big( [ 0, \, + \infty [ ; L^1 (\R^d)-s \big)$ and
\begin{equation}
\label{e:supp}
    \mathrm{supp} \, u_i  \subseteq [0,T]\times Q_i + \overline{B}_{LT}  (0),
\end{equation}
where $L  = \| b \|_{L^{\infty}}$.
We set
$$
    u (t, \cdot) : =  \sum_{i =1}^{+ \infty}  u_i (t, \cdot).
$$
By noticing that the transport equation is linear and that the previous sum is locally finite by~\eqref{e:supp}, we get that $u \in \mathcal C^0 ( [0, + \infty[ ; L^1_{\mathrm{ loc}} (\R^d)-s)$ is a solution of~\eqref{e:cauchy}.

{\sc Step 3.}
We fix $T > 0$ and we construct a strongly continuous solution $u_i$ of~\eqref{e:cauchy1} defined for $t \in [0, T]$ and satisfying~\eqref{e:supp}. To simplify the notation, we write $u$, $\bar u$ and $Q$ instead of $u_i$, $\bar u_i$ and $Q_i$.

We first approximate $\rho$ and $b$ by $\rho_\eps:=\rho*\eta_\eps$ and
$b_\eps:=(b\rho)*\eta_\eps / \rho_\eps $ respectively. Here $\eta_\eps$ is a
sequence of smooth convolution kernels: the function $\rho_{\eps}$ is always strictly positive by~\eqref{e:rhobound} and hence $b_{\eps}$ is well defined. By using the notation
$L= \|b\|_{L^\infty}$, we get that all vector fields $b_\eps$ are bounded by
$L$.  Also, we can assume that the support of $\bar u \ast \eta_{\eps}$ is strictly contained in $Q  + B_\eps (0)$. Because of~\eqref{e:rhobound}, there exists a constant $M$ satisfying
\begin{equation}
\label{e:emme}
     \frac{1}{M} \leq \rho_{\eps} (t,y) \leq M \qquad \textrm{for every $(t,y)$ such that $t \in [0, T]$ and
     $x \in Q  + B_{1  + 3L T}(0)$}.
\end{equation}
Let $Y_\eps$ denote the flow generated by $b_\eps$ and let
$Z_\eps$ denote the inverse of $Y_\eps$, i.e.~we have
$Z_\eps(t,Y_\eps(t,y))=y$ and $Y_\eps(t,Z_\eps(t,x))=x$. Then
$$
     |Y_\eps(t,y)-y| \leq \|b_\eps\|_{L^\infty} t \leq  Lt
     \qquad \textrm{for every $(t,y) \in [0, T] \times \R^d$}
$$
and analogously
$$
|Z_\eps(t,x)-x| \leq  Lt
     \qquad \textrm{for every $(t, x) \in [0, T] \times \R^d$.}
$$
Let $JY_\eps(t,x)$ denote the Jacobian of $Y_{\eps}(t, x)$: since
$
\rho_\eps(t,Y_\eps(t,y)) JY_\eps(t,y) = \rho_\eps(0,y),
$
by relying on~\eqref{e:emme} we deduce the bound
\begin{equation}\label{e:emme1}
\frac{1}{M^2} \leq JY_\eps(t,y)\leq M^2 \qquad \textrm{for every
$t\in [0,T]$, $y\in Q  + B_{1 + 2LT} (0)$}.
\end{equation}
Since $x\in Q  + B_{1  + L T}(0)$ implies $Z_\eps(t,x)\in Q  + B_{1  + 2L T}(0)$
 we obtain
\begin{equation}
\label{e:bound Zn}
         \frac{1}{M^2} \leq JZ_\eps(t,x) \leq M^2,\qquad
       \qquad \textrm{for every $t\in [0,T]$, $x\in  Q + B_{1 + LT} (0)$}.
\end{equation}
We now define $\bar \l_{\eps} (\cdot) :  =  \bar u \ast \eta_{\eps} / \rho_{\eps} (0, \cdot)$. Then,
by setting
$\l_\eps(t,x): =\bar \l_{\eps}(Z_\eps(t,x))$,
we get a solution of the transport equation
$$
\left\{ \begin{array}{l}
\partial_t \l_\eps + b_\eps\cdot \nabla \l_\eps=0  \\
\l_\eps (0, \cdot) = \bar \l_{\eps} (\cdot) \, .
\end{array} \right.
$$
Since $ \|  \lambda_{\eps} \|_{L^{\infty}}  \leq
\| \bar \lambda_\eps \|_{L^{\infty}} \leq M \| \bar u \|_{L^{\infty}}$,
$\{ \lambda_\eps \}$ is weakly-$\ast$ compact in $L^\infty$.
Moreover, observing that $\rho_\eps (t, \cdot) \to \rho (t, \cdot)$ strongly in $L^1_{\mathrm{loc}}(\R^d)$
for every $t \ge 0$, up to subsequences (not explicitly labelled for simplicity of notation)
we get
$$
\lambda_\eps \rho_\eps \weaks u \qquad \textrm{ weakly-$\ast$ in $L^{\infty} \big( \, [ 0, T] \times \R^d \, \big)$}
$$
for some bounded function $u$ whose support is contained in
$[0, T ] \times \bigl(Q +\overline{B}_{LT} (0)\bigr)$.
First we point out that, since $\lambda_{\eps} \rho_{\eps}$ solves
$$
  \partial_t \big( \lambda_{\eps} \rho_{\eps} \big)
   + \mathrm{div} \big( b_{\eps} \lambda_{\eps} \rho_{\eps} \big)=0 \,,
$$
then $u$ solves
$$
     \partial_t u + \mathrm{div} (bu) =0
$$
in the sense of distributions. Moreover, by extracting a further subsequence we can assume that
\begin{equation}
\label{e:rat}
     \lambda_\eps \rho_\eps (t, \cdot)
     \weaks u (t, \cdot) \qquad
     \textrm{ weakly-$\ast$ in $L^{\infty} \big( \R^d \big)$ for every $t \in [0, T ] \,  \cap \mathbb{Q}$ }.
\end{equation}
Since $ \lambda_\eps \rho_\eps (t, \cdot) $ is weakly-$\ast$ continuous, uniformly in $\eps$
(see for instance \cite[Lemma 8.1.2]{ags:book}),
\eqref{e:rat}  ensures that
$$
    \lambda_\eps \rho_\eps (t, \cdot)
     \weaks u (t, \cdot)
     \qquad \textrm{ weakly-$\ast$ in $L^{\infty} \big( \R^d \big)$ for every $t \in [0, T     ] $ }.
$$
Thus, the only issue we are left to address is that $u(t, \cdot)$ converges strongly in $L^1_\loc(\R^d)$ to $u(0,\cdot) = \bar u$ when $t \to 0^+$. By the lower semicontinuity of the $L^1$ norm with respect to the weak-$\ast$ convergence in $L^\infty$,
\begin{equation}
\label{e:liminf}
          \int_{\R^d} |u(t,x)- \bar u  (x)| \,dx \leq
          \liminf_{\eps \to 0}  \int_{\R^d} |\lambda_\eps \rho_\eps (t,x) -
          \bar \lambda_\eps (x) \rho_\eps (0, x)| \, dx.
\end{equation}
Now, for any given $\eps \leq 1$, we exploit the decomposition
\begin{equation}
\label{e:tri}
\begin{split}
           \int_{\R^d}
           |\lambda_\eps \rho_\eps (t,x)- \bar \lambda_\eps (x)
          \rho_\eps (0, x)|\,dx
&          \leq   \int_{\R^d} \rho_\eps(t, x)  |\l_\eps(t,x)  - \bar \lambda_\eps (x)  | \, dx \\
&       \quad +
          \int_{\R^d} |  \bar \lambda_\eps (x)| \, |  \rho_\eps(t,x)-
            \rho_{\eps}(0,x)|\,dx . \\
\end{split}
\end{equation}
We first focus on the first term in~\eqref{e:tri}: for any
$\psi \in C^\infty_c(\R^d)$  we have
\begin{equation}
\label{e:lambdae}
\begin{split}
                     \int_{\R^d}  |\l_\eps(t,x) & - \bar \lambda_\eps (x)|\,dx
                     = \int_{Q + B_{1 + Lt} (0)}
                    | \bar \lambda_\eps  (Z_\eps(t,x)) - \bar \lambda_\eps (x)| \,dx \\
     &            \leq \int_{Q + B_{1 + Lt} (0) }
                    | \bar \lambda_\eps (Z_\eps(t,x))- \psi(Z_\eps(t,x))| \,dx\\
&                \quad +
                   \int_{Q + B_{1 + Lt} (0) }
                    |\psi(Z_\eps(t,x))- \psi(x)| \,dx
                 +  \int_{Q + B_{1 + Lt} (0)  }
                 |\psi(x)- \bar \lambda_\eps (x)|\,dx\\
                 &  \leq
                 \int_{Q + B_{1 + Lt} (0) }
                | \bar \lambda_\eps (Z_\eps(t,x))- \psi(Z_\eps(t,x))|\,dx +
                 \mathrm{Lip}(\psi) \Leb^{d}\big(Q + B_{1 + Lt} (0) \big)\,L \, t \\
                 & \quad +  \|\psi - \bar \lambda_\eps \|_{L^1 ( \R^d )} . \phantom{\int} \\
\end{split}
\end{equation}
By exploiting the bound~\eqref{e:bound Zn}
on $JZ_\eps$ we then get
\begin{equation}
\label{e:change2}
         \int_{Q  + B_{1 + Lt} (0) }
         | \bar \lambda_\eps(Z_\eps(t,x))- \psi(Z_\eps(t,x))|\,dx \leq
        M^2 \|\psi -  \bar \lambda_\eps \|_{L^1 ( \R^d )}.
\end{equation}
Hence, by combining~\eqref{e:emme},~\eqref{e:lambdae},~\eqref{e:change2} and taking into
account the strong convergence of $\bar\lambda_\eps$ to $\bar u/\rho(0,\cdot)$
as $\eps\to 0$, we eventually obtain
\begin{multline}
\label{e:rolambdae}
 \limsup_{\eps\to 0}\int_{\R^d} \rho_\eps(t, x)  |\l_\eps(t,x)  - \bar \lambda_\eps (x)  | \, dx \\
\leq M(1+M^2) \left\|\psi - \frac{\bar u}{\rho(0,\cdot)} \right\|_{L^1( \R^d ) }
+ M   \mathrm{Lip}(\psi) \Leb^{d}\big(Q + B_{1 + Lt} (0) \big)\,L \, t.
\end{multline}
We now focus on the second term in~\eqref{e:tri}: since $ \bar \lambda_\eps $ is bounded by
$M \| \bar u \|_{L^{\infty}}$, by relying on the properties of the convolution we have
\begin{equation}
\label{e:lazero}
          \int_{\R^d} |  \bar \lambda_\eps (x) | \,
           |  \rho_\eps(t,x)-
            \rho_{\eps}(0,x)|  \,dx
          \leq   M \| \bar u \|_{L^{\infty} }
          \int_{Q+ B_{1}(0) }  |  \rho(t,x)-
              \rho (0,x)|  \,dx
           \leq M \| \bar u  \|_{L^{\infty} } \omega (t).
\end{equation}
In the previous expression, $\omega$ denotes a modulus of
continuity $\omega :[0,+\infty[ \to [0,+\infty[$ for the map
$ t \mapsto \rho(t, \cdot)\chi_{Q+B_1(0)}$, which is strongly continuous
in $L^1 (\R^d)$.

Finally, we combine~\eqref{e:liminf},~\eqref{e:tri},~\eqref{e:rolambdae},~\eqref{e:lazero} and we eventually obtain
\begin{align*}
    \int_{\R^d} |u(t,x)- \bar u  (x)| \,dx \leq  &
    M(1+M^2) \|  \psi - \bar u/\rho(0,\cdot) \|_{L^1(\R^d)} \\
&   + M   \mathrm{Lip}(\psi) \Leb^{d}\big(Q+ B_{1 + Lt} (0) \big)\,L \, t
+  M \| \bar u  \|_{L^{\infty} }  \omega (t). \\
\end{align*}
By letting $t\downarrow 0$ and using the arbitrariness of $\psi$, we
deduce that $t\mapsto u(t,\cdot)$ is strongly continuous
at $0$ in $L^1$. This concludes the proof of Theorem~\ref{t:uni2}.
\end{proof}

We point out that in the proof of Theorem~\ref{t:uni2}, the
assumption that $b$ and $\rho$ are both $BV_{\loc}$ in space and
time is only needed in the Uniqueness Part to show that
distributional solutions are renormalized on $]0,+\infty[ \times
\R^d$ and in the first step of the Existence Part to apply
Lemma~\ref{l:lift}(ii) and get that the map $t \mapsto u(t, \cdot
)$ is strongly continuous in $L^1_\loc (\R^d)$ at any $\tau >0$.
Hence, if we additionally assume that the divergence of $b$ is a
locally integrable function, we can relax the regularity
assumption on both $b$ and $\rho$. More precisely, assume that $b
\in L^1_\loc \big( ] 0,+\infty[ ; BV_\loc(\R^d ;\R^d) \big)$ is a
bounded, locally strongly nearly incompressible vector field such
that $\div b \in L^1_\loc \big( ] 0,+\infty[ \times \R^d \big)$.
Then we can directly apply Ambrosio's theorem~\cite{Amb:trabv} to
deduce that distributional solutions are renormalized. By arguing
as in the proof of Lemma~\ref{l:lift}(ii), we obtain that, if $u$
is a locally bounded solution of~\eqref{e:cauchy}, then $t \mapsto
u(t, \cdot )$ is strongly continuous from $]0,+\infty[$ in
$L^1_\loc(\R^d)$. By combining all the previous considerations, we
get:
\begin{theorem}\label{t:uni3} Let $b \in L^1_\loc \big( ] 0,+\infty[ ; BV_\loc(\R^d ;\R^d) \big)$ be a bounded, locally strongly
nearly incompressible vector field such that $\div b  \in
L^1_{\loc}\big( ] 0,+\infty[ \times \R^d \big)$. Then there exists
a locally bounded solution {$u \in \mathcal C^0 \big( [0,+\infty[
; L^1_\loc(\R^d)-s \big)$} to the Cauchy problem for the
continuity equation~\eqref{e:cauchy}. Furthermore the solution is
unique in this class.
\end{theorem}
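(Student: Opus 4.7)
My plan is to carry out the arguments of Theorem~\ref{t:uni2} essentially verbatim, replacing the single use of the space-time renormalization theorem of \cite{Amb:trabv} (which required the field $(q,bq)$ to be $BV_\loc$ jointly in $(t,x)$) by a direct application of its spatial counterpart. Since for a.e.\ $t > 0$ one has $b(t,\cdot)\in BV_\loc(\R^d)$ and $\div b(t,\cdot)\in L^1_\loc(\R^d)$, the DiPerna--Lions/Ambrosio theorem \cite{DiPLi, Amb:trabv} guarantees that every locally bounded distributional solution $w$ of $\partial_t w + \div(b w) = 0$ on $\,]0,+\infty[\,\times\R^d$ is renormalized: for every $\beta\in\lip_\loc(\R)$,
\[
\partial_t \beta(w) + \div(b\beta(w)) + \bigl(\beta'(w)w - \beta(w)\bigr)\div b = 0
\]
in the sense of distributions. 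This replaces Lemma~\ref{l:lift}(i). The strong-continuity assertion of Lemma~\ref{l:lift}(ii) can then be reproduced directly in the spatial variable: on any half-line $[\tau,+\infty[$ with $\tau>0$, apply the above with $\beta(z)=z^2$ to $w=u/\rho$ (which is a renormalized transport solution since $\div b\in L^1_\loc$) to obtain that $u^2/\rho$ satisfies a continuity equation with an explicit $L^1_\loc$ source, hence admits a weakly-$*$ continuous representative; the upgrade from weak to strong proceeds exactly as in the truncation argument of Lemma~\ref{l:lift}(ii). Similarly, Lemma~\ref{l:lift}(iii) is recovered by applying $\beta(z)=|z|$ to $w=(u_1-u_2)/\rho$.

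\emph{Uniqueness.} Let $u_1,u_2\in\mathcal C^0([0,+\infty[\,;L^1_\loc(\R^d)-s)$ be two locally bounded solutions. Since \eqref{e:cauchy} is linear, $w := u_1 - u_2$ solves the continuity equation; applying the renormalization above with $\beta(z) = |z|$ and integrating against test functions adapted to the cone of dependence as in \cite[Lemma~3.17]{del:notes} yields, for $0<\eps<t\le T$ and $R>0$,
\[
\int_{B_R(0)}|u_1(t,x) - u_2(t,x)|\,dx \le C(T,R)\int_{B_{R+T\|b\|_\infty}(0)}|u_1(\eps,x) - u_2(\eps,x)|\,dx,
\]
where $C(T,R)$ depends only on $\|(\div b)^-\|_{L^1}$ on the truncated cone. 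Sending $\eps\downarrow 0$ and invoking strong $L^1_\loc$-continuity at the origin forces $u_1\equiv u_2$.

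\emph{Existence.} The three-step construction of Theorem~\ref{t:uni2} carries over without change. The cube-partition localization, the smooth approximations $\rho_\eps:=\rho*\eta_\eps$ and $b_\eps:=(b\rho)*\eta_\eps/\rho_\eps$, the flow representation $\lambda_\eps(t,x) = \bar\lambda_\eps(Z_\eps(t,x))$, the Jacobian bounds \eqref{e:emme1}--\eqref{e:bound Zn}, and the decomposition \eqref{e:tri} with the estimates \eqref{e:lambdae}--\eqref{e:lazero} only use the $L^\infty$ bound on $b$, the local upper and lower bounds \eqref{e:rhobound} on $\rho$, and the strong $L^1_\loc$-continuity of $\rho$ assumed in Definition~\ref{d:nearly}; none of them invokes BV regularity in time. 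The output is a locally bounded distributional solution $u$ on $[0,T]\times\R^d$ with $u(t,\cdot)\to \bar u$ strongly in $L^1_\loc$ as $t\to 0^+$, which together with the continuity on $\,]0,+\infty[$ from the first paragraph gives $u\in\mathcal C^0([0,+\infty[\,;L^1_\loc(\R^d)-s)$.

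The main point that needs care is the adaptation of Lemma~\ref{l:lift}(ii): the space-time lift trick used there is unavailable in the present setting, but it is precisely the assumption $\div b \in L^1_\loc$ that allows one to invoke DiPerna--Lions renormalization \emph{in space alone}, which is enough to run the weak-to-strong upgrade and produce the desired $L^1_\loc$-strong continuity on $\,]0,+\infty[\,$. Beyond this check, the proof is a straightforward transcription of the arguments of Theorems~\ref{t:uni1} and~\ref{t:uni2}.
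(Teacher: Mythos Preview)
Your proposal is correct and matches the paper's approach exactly: the paper's proof is the paragraph immediately preceding the theorem statement, observing that the only role of joint space-time $BV$ regularity in Theorem~\ref{t:uni2} is to feed the renormalization Lemma~\ref{l:lift}, and that under the assumption $\div b\in L^1_\loc$ one may instead invoke Ambrosio's theorem directly in the space variable, after which the existence construction and the uniqueness argument of Theorem~\ref{t:uni2} transfer verbatim. Two inessential corrections: for $\beta(z)=|z|$ the term $(\beta'(w)w-\beta(w))\div b$ vanishes identically, so your constant $C(T,R)$ is actually $1$; and if you renormalize the \emph{transport} equation for $u/\rho$ with $\beta(z)=z^2$ and then multiply by $\rho$, the function $u^2/\rho$ solves the continuity equation with \emph{no} source term.
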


\section{An example of nonuniqueness}\label{s2}
Roughly speaking, the results presented in Section~\ref{ss:strongtr} allow to weaken the assumptions on the summability of the $BV$ norm of the vector field, at the price of a restriction of the class of solutions considered. In the model case of a divergence free vector field (which is trivially locally strongly
nearly incompressible), comparing Theorem \ref{t:uni3} and Ambrosio's theorem \cite{Amb:trabv} we see that we swap summability of the $BV$ norm up to $t=0$ for the condition of strong continuity of the solution.

\subsection{Depauw's vector field}\label{ss:depauw} In \cite{Depauw:ex} Depauw constructs an example of nonuniqueness for the transport equation relative to a bounded time-dependent divergence free vector field in the plane. More precisely,  Depauw exhibits
a vector field $a(t,x) \in L^\infty \big( [0,1] \times \R^2 ; \R^2 \big)$, with
$$
\div a=0 \; \text{ and } \; a \in L^1_\loc \big( ] 0,1] ;BV_\loc(\R^2;\R^2)\big) \,,
$$
such that the Cauchy problem
\begin{equation}\label{e:dpcp}
\begin{cases}
\partial_t u + \div ( au ) = 0 \\
u(0,x) = 0
\end{cases}
\end{equation}
has a nontrivial solution $\tilde u \not \equiv 0$. Observe that, being divergence free, the vector field $a$ is locally strongly nearly incompressible (see Definition \ref{d:nearly}): simply take $\rho(t,x) \equiv 1$. Thus, Theorem~\ref{t:uni2} implies that $\tilde u$ cannot be strongly continuous in time, since the constant $0$ trivially provides the unique
 strongly continuous solution of~\eqref{e:dpcp}.

We now want to briefly sketch the construction of \cite{Depauw:ex}. Let us first consider a dyadic subdivision (up to negligible sets) of the time interval, i.e.
$$
[0,1] = \bigcup_{k=1}^\infty I_k \qquad \text{ with } I_k = \left[ \frac{1}{2^k} , \frac{1}{2^{k-1}} \right] \,.
$$
For $t \in I_k$ the vector field $a(t,\cdot)$ does not vary in time, and consists of ``vortexes'' on a pattern with scale $1/2^k$, arranged in such a way that the divergence is zero and, for any compact set $K\subset \R^2$,
\begin{equation}\label{e:dpbv}
\left\| a(t,\cdot) \right\|_{BV(K)} \sim 2^k \qquad \text{ for $t \in I_k$.}
\end{equation}
This allows to construct a solution which undergoes, on each interval $I_k$, a more and more refined mixing, as $t \to 0^+$. More precisely, going backward in time on each interval $I_k$, the solution is rearranged from a function which oscillates between $\pm 1$ on a chessboard of size $1/2^{k-1}$ into a function which oscillates between $\pm 1$ on a chessboard of size $1/2^k$. This in particular implies that
$$
\tilde u (t,\cdot) \weaks 0 \qquad \text{in $L^\infty(\R^2) -w^*$,}
$$
but
$$
\tilde u (t,\cdot) \not \to 0 \qquad \text{in $L^1_\loc(\R^2) -s$,}
$$
which is coherent with Theorem~\ref{t:uni2}.

\subsection{Approximability of the solution with smooth maps} The proof of Theorem~\ref{t:uni2} directly shows that any limit of solutions with smooth approximating vector fields is, as a matter of fact, strongly continuous. Hence we get that the solution $\tilde u$ constructed in \cite{Depauw:ex} cannot be constructed by approximation.

It is also worth to mention a connection with the results in \cite{Bou:cri}, regarding the density of smooth functions in the space of the solutions of the transport equation. Namely, Theorem~2.1 in \cite{Bou:cri} asserts that the approximability of a solution with smooth functions is equivalent to the uniqueness property for both the forward and the backward Cauchy problems. We note that, while the vector field of \cite{Depauw:ex} provides a counterexample to forward uniqueness, the same vector field enjoys uniqueness for the backward Cauchy problem. More comments and variations on the construction by Depauw are also presented in \cite{Bou:cri}.

\subsection{Strong continuity of the vector field does not imply uniqueness}

The vector field described in Section~\ref{ss:depauw} provides a counterexample to the uniqueness in the class of weakly continuous solutions, but
$$
a \not \in {\mathcal C}^0 \big( [0,1] ; L^1_\loc(\R^2;\R^2) \big) \,.
$$
Thus, we could wonder whether Theorem~\ref{t:uni2} holds in the larger class of weakly continuous solutions, provided we impose strong continuity of the vector field, i.e.
$$
b \in {\mathcal C}^0 \big( [0,+\infty[ ; L^1_\loc(\R^d;\R^d) \big)\,.
$$
This means that we are led to the following question.
\begin{question}\label{quest} Let $b \in BV_\loc \big( ]0,+\infty[\times \R^d ;\R^d) \big)$ be a bounded, locally strongly nearly incompressible vector field. Assume that an admissible density $\rho$
in the definition of locally strongly nearly incompressibility belongs to
$BV_{\loc}\big( ]0,+\infty[ \times \R^d \big)$. Assume moreover that
$$
b \in {\mathcal C}^0 \big( [0,+\infty[ ; L^1_\loc(\R^d;\R^d) \big)\,.
$$
Is it true that the Cauchy problem~\eqref{e:cauchy} has a unique locally bounded solution $u$ in the class
${\mathcal C}^0 \big( [0,+\infty[ ; L^\infty_\loc(\R^d) -w^* \big)$?
\end{question}

Notice that, if the answer to Question~\ref{quest} were positive, we would be able to prove one of the results regarding the well-posedness of the chromatography system, the one presented in Theorem~\ref{t:chrom2}, in the more general class of solutions
$$
U = (u_1,u_2) \in {\mathcal C}^0 \big( [0,+\infty[ ; L^\infty_\loc(\R;\R^2) -w^* \big) \,.
$$
However, we can show the following negative result:

\begin{propos}\label{p:osc} The answer to Question~\ref{quest} is negative. More precisely, there exists a vector field $c(t,x) \in L^\infty \big( [0,T] \times \R^2 ; \R^2 \big)$ satisfying the following properties:
\begin{enumerate}
\item[(i)] $c \in BV_\loc \big( ]0,T] \times \R^2 ; \R^2 \big)$;
\item[(ii)] $\div c= 0$ (and thus $c$ is in particular locally strongly nearly incompressible);
\item[(iii)] $c \in {\mathcal C}^0 \big( [0,T] ; L^\infty(\R^2;\R^2) -s \big)$;
\item[(iv)] The Cauchy problem
\begin{equation}\label{e:vtilde}
\begin{cases}
\partial_t v + \div (cv) = 0 \\
v(0,x) = 0
\end{cases}
\end{equation}
has a bounded solution $\tilde v \not \equiv 0$, with $\tilde v \not \in {\mathcal C}^0 \big( [0,T] ; L^1_\loc(\R^2) -s \big)$.
\end{enumerate}
\end{propos}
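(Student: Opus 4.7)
My strategy is to produce $c$ as a time reparametrization of Depauw's vector field $a$ from Subsection~\ref{ss:depauw}, designed so that the only failure of uniqueness --- the lack of strong $L^1_\loc$ continuity of solutions at $t=0$ --- is preserved, but the vector field itself becomes strongly $L^\infty$-continuous in time. The key observation is that on each dyadic interval $[2^{-k},2^{-(k-1)}]$ Depauw's field is autonomous, equal to a fixed divergence-free spatial field $a_k$; so by slowing down time through a factor $\beta'(t)$ which vanishes at every junction between consecutive dyadic intervals (absorbing the spatial jumps of $a(t,\cdot)$) and which vanishes increasingly fast as $t\to 0^+$ (so that the $L^\infty$ norm of $c$ itself also vanishes there), one obtains the desired vector field.

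Concretely, choose a strictly decreasing sequence $s_k\searrow 0$ with $s_0=T$ and $|I_k|:=s_{k-1}-s_k$ satisfying $\sum_k|I_k|=T$ and $|I_k|\,2^k\to+\infty$ (for instance $|I_k|\propto 2^{-k/2}$). On each $I_k$ construct a $C^1$ strictly increasing bijection $\beta_k:I_k\to[2^{-k},2^{-(k-1)}]$ with $\beta_k'$ vanishing at both endpoints of $I_k$ and $\|\beta_k'\|_{L^\infty(I_k)}\le C\,2^{-k}/|I_k|$; this is consistent with $\int_{I_k}\beta_k'=2^{-k}$. Gluing these yields $\beta\in C^1([0,T];[0,1])$ with $\beta(0)=0$ and $\beta(T)=1$, and I set
\[
c(t,x) := \beta'(t)\, a(\beta(t),x).
\]
Depauw's construction may be taken so that $a$ has support in a fixed compact subset of $\R^2$, and then $c\in L^\infty([0,T]\times\R^2;\R^2)$.

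Properties (i)--(iii) are direct. Item (ii) holds because $\div c=\beta'(t)\,\div_x a(\beta(t),\cdot)=0$. For (i), on any $[\delta,T]\times K$ with $\delta>0$ and $K$ compact only finitely many $I_k$ are involved, and on each $I_k\times K$ the function $c=\beta'(t)\,a_k(x)$ is the product of a smooth function of $t$ and a $BV_\loc$ function of $x$; since $c$ vanishes across the junction points $s_k$, no jump term appears, yielding $c\in BV_\loc(]0,T]\times\R^2;\R^2)$. For (iii), strong $L^\infty$-continuity of $t\mapsto c(t,\cdot)$ is clear inside each $I_k^\circ$ by smoothness, at each junction $s_k$ because $\beta'(s_k)=0$ from both sides, and at $t=0$ because $\|c(t,\cdot)\|_{L^\infty(\R^2)}\le C\,2^{-k}/|I_k|\to 0$ as $k\to\infty$.

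For (iv), let $\tilde u$ be Depauw's nontrivial solution of \eqref{e:dpcp} and set $\tilde v(t,x):=\tilde u(\beta(t),x)$. On each $I_k^\circ$, $\beta$ is a $C^1$ diffeomorphism onto the open dyadic interval, and a direct change of variables in the distributional formulation (testing against $\phi\in C^\infty_c(I_k^\circ\times\R^2)$, with the factor $\beta'$ being exactly compensated by the Jacobian of the time substitution) shows that $\tilde v$ solves \eqref{e:vtilde} on $I_k^\circ\times\R^2$; the equation then extends to all of $]0,T[\times\R^2$ since $\tilde v$ is weakly-$*$ continuous in time (hence $\partial_t\tilde v$ carries no masses at the junction times $s_k$). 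The initial condition $\tilde v(0,\cdot)=0$ follows from $\beta(0)=0$ together with the weak-$*$ continuity of $\tilde u$ at $0$; $\tilde v\not\equiv 0$ follows from $\tilde u\not\equiv 0$ and the bijectivity of $\beta$; the failure of strong $L^1_\loc$ continuity of $\tilde v$ at $0$ is inherited from that of $\tilde u$ via continuity and strict monotonicity of $\beta$ near $0$. The mildly delicate point of the whole argument is the combined requirement on $\beta$ of being a bijection, having vanishing derivative at all junctions, and satisfying $\|\beta'\|_{L^\infty(I_k)}\to 0$, but all three follow from the quantitative choice $|I_k|\,2^k\to+\infty$ together with a standard smooth bump construction.
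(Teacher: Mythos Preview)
Your proof is correct and follows essentially the same approach as the paper's: both construct $c$ as a time reparametrization of Depauw's field, slowing it down so that $\|c(t,\cdot)\|_{L^\infty}\to 0$ as $t\to 0^+$ and letting the time-derivative factor vanish at the dyadic junctions to absorb the spatial jumps of $a(t,\cdot)$; the paper does this by stretching $I_k$ to $J_k$ of length $k2^{-k}$, scaling by $1/k$, and multiplying by compactly supported ``activation functions'' $\psi_k$, which amounts exactly to your formula $c=\beta'\cdot(a\circ\beta)$ for a particular choice of $\beta$. One harmless inaccuracy: Depauw's field is spatially periodic rather than compactly supported, but this is irrelevant since $c\in L^\infty$ already follows from $|a|\le 1$ and the boundedness of $\beta'$.
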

\begin{proof} We briefly illustrate how it is possible to modify the construction in \cite{Depauw:ex} in such a way that the resulting vector field is strongly continuous. The vector field $a$ is the one described in Section~\ref{ss:depauw}.

{\sc Step 1. Strong continuity at time $t=0$.} The vector field $a$ has length between $0$ and $1$ in all the regions in which it is nonzero. This length is chosen in such a way that the solution is mixed from a chessboard of size $1/2^{k-1}$ into a chessboard of size $1/2^k$ precisely in time $1/2^k$.

To construct the vector field $c$, we decrease the strength of the vector field in each interval in such a way that the discontinuity at $t=0$ is ruled out. First, we substitute each interval $I_k$ with an interval $J_k$ such that
$$
\Leb^1( J_k ) = k 2^{-k} \,.
$$
We notice that $\sum_{k=1}^\infty \Leb^1( J_k ) = T < +\infty$, thus we are again concerned with a finite interval of time. In each interval $J_k$ we then simply multiply the vector field $a$ relative to the interval $I_k$ by $1/k$. Thus, the solution undergoes the same mixing, but in a time which is $k$ times larger than the one in Depauw's example. Notice that we precisely have $\Leb^1(J_k) = k \Leb^1( I_k )$. In this way, we have the convergence $c(t,\cdot) \to 0$ strongly in $L^\infty(\R^2)$ as $t\downarrow 0$.

{\sc Step 2. Blow up of the $BV$ norm.} Let us check that the spatial $BV$ norm of the vector field $c(t,\cdot)$ blows up as $t\downarrow 0$. We first notice that for any compact set $K \subset \R^2$ we have
$$
\|a(t,\cdot)\|_{BV(K)} \sim 2^k \qquad \text{ for $t \in I_k$,}
$$
thus $\|a(t,\cdot)\|_{BV(K)} \sim 1/t$. In our case, we simply have
$$
\|c(t,\cdot)\|_{BV(K)} \sim \frac{2^k}{k} \qquad \text{ for $t \in J_k$.}
$$
Noticing that
$$
\sum_{j=k+1}^\infty \frac{j}{2^j} \geq k \sum_{j=k+1}^\infty \frac{1}{2^j} = \frac{k}{2^k}\,,
$$
we have that $t \geq k/2^k$ when $t \in J_k$, and this implies
$$
\|c(t,\cdot)\|_{BV(K)} \gtrsim \frac{1}{t}\,.
$$
This shows that $c$ does not belong to $L^1\big( [0,T] ; BV_\loc(\R^2;\R^2) \big)$, which is coherent with the uniqueness result of \cite{Amb:trabv}.

{\sc Step 3. Strong continuity at any time $t\in [0,T]$.} Let us notice that in each interval $J_k$ the vector field $c$ is constant with respect to time. However, at the extrema of the intervals $J_k$ the vector field $c$ has a jump discontinuity. To rule out this phenomenon, we choose for every time interval $J_k$ an ``activation function'' $\psi_k (t)$, and we substitute the vector field $c(t,x)$ previously described with $c(t,x)\psi_k(t)$, for $t \in J_k$. We only need to choose, on each $J_k$, the function $\psi_k$ satisfying
$$
\psi_k \in {\mathcal C}^0_c ( \stackrel{\circ}{J_k} ) \,, \qquad  \| \psi_k \|_\infty \leq 2
\quad\text{ and }\quad \int_{J_k} \psi_k(s) \, ds = \Leb^1(J_k) \,.
$$
In this way, the solution undergoes the same mixing as before in each time interval $J_k$, but the discontinuities at the extrema of the intervals $J_k$ are ruled out. It is also immediate that $\partial_t c$ is a locally finite measure,
hence $c\in BV_\loc\bigl(]0,T] \times \R^d\bigr)$.

{\sc Step 4. Conclusion.} In this way, we have constructed a vector field
$c(t,x) \in L^\infty \big( [0,T] \times \R^2 ; \R^2 \big)$ satisfying properties (i)-(ii)-(iii).
The nontrivial solution $\tilde v$ of \eqref{e:vtilde} has the same behaviour of the nontrivial solution
$\tilde u$ of \eqref{e:dpcp} described in Section~\ref{ss:depauw}: at the Lagrangian level we have
simply reparameterized (with the same function) all trajectories,
 ``stretching'' the original vector field $a$ in such a way that we gain strong continuity of the
vector field $c$ with respect to the time.
\end{proof}

\section{Applications to the chromatography system}\label{s:chrom}

\subsection{The chromatography system and classification of the entropies}\label{s:chro:en}
In this section we are concerned with the system~\eqref{e:chrom}
coupled with the initial conditions
\begin{equation}
\label{e:chromdata}
u_1(0,x) = \bar u_1(x) \,, \qquad u_2(0,x) = \bar u_2(x)\,.
\end{equation}
This system belongs to the Temple class \cite{temple} and arises in the study of two component chromatography. Here $u_1$, $u_2 : [0,+\infty[ \times \R \to \R$ are nonnegative functions which express transformations of the concentrations of two solutes, see \cite[page 102]{Bre:book}. Here we make some preliminary considerations by introducing a change of variables and discussing a related classification of the entropies. Our well-posedness results for~\eqref{e:chrom} are discussed in Sections~\ref{s:wellp:w} and \ref{s:wellp:s}.

By introducing the change of variables
\begin{equation}\label{e:change}
v = u_1 + u_2 \,, \qquad w = u_1 - u_2\,,
\end{equation}
system \eqref{e:chrom} becomes
\begin{equation}\label{e:chrom2}
\begin{cases}
\partial_t v + \partial_x \left( \displaystyle \frac{v}{1+v} \right) = 0 \\ \\
\partial_t w + \partial_x \left( \displaystyle \frac{w}{1+v} \right) = 0 \,.
\end{cases}
\end{equation}
Namely, \eqref{e:chrom} splits in the coupling between a scalar
one-dimensional conservation law and a linear continuity equation.
Note that the coefficient of the second equation in
\eqref{e:chrom2} depends on the solution of the first equation,
but the first equation in \eqref{e:chrom2} does not depend on the
solution of the second one.

This decoupling allows for a classification of all the entropies of system \eqref{e:chrom}:

\begin{lemma}\label{l:entropy}
Let $(\eta,q) : \R^2 \to \R \times \R$ be twice continuously differentiable. Then $(\eta,q)$ is an entropy-entropy flux pair for system \eqref{e:chrom} (in the sense of Definition~\ref{e:entrsys}) if and only if the following holds: there exists an entropy-entropy flux pair
$(\tilde \eta, \tilde q) : \R \to \R \times \R$  for
\begin{equation}\label{e:scal}
\partial_t v + \partial_x \left( \displaystyle \frac{v}{1+v} \right) = 0
\end{equation}
satisfying
\begin{equation}\label{e:nuove}
\eta(u_1,u_2) = \tilde \eta (u_1+u_2) + C (u_1 - u_2)\,, \qquad q (u_1,u_2) = \tilde q (u_1+u_2) + \frac{C (u_1-u_2) }{1+u_1+u_2}
\end{equation}
for some real constant $C$. Moreover, $\eta$ is convex if and only if $\tilde \eta$ is convex.
\end{lemma}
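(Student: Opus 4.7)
The plan is to carry out the whole analysis after the linear change of variables $(u_1, u_2) \mapsto (v, w) := (u_1 + u_2, u_1 - u_2)$, under which \eqref{e:chrom} becomes \eqref{e:chrom2} and which, being linear and invertible, sets up a bijection between entropy-entropy flux pairs of the two systems. Writing $E(v, w) := \eta\bigl(\frac{v+w}{2}, \frac{v-w}{2}\bigr)$, $Q(v, w) := q\bigl(\frac{v+w}{2}, \frac{v-w}{2}\bigr)$, and computing $DG$ for the flux $G(v, w) = \bigl(v/(1+v),\, w/(1+v)\bigr)$, the compatibility condition $\nabla Q = (\nabla E)\,DG$ takes the form
\begin{equation*}
Q_v = \frac{E_v - w\,E_w}{(1+v)^2}, \qquad Q_w = \frac{E_w}{1+v}.
\end{equation*}
Equating the mixed second derivatives $Q_{vw} = Q_{wv}$ and simplifying, the condition reduces to the single homogeneous equation
\begin{equation*}
v\,\partial_v \partial_w E + w\,\partial_w^2 E = 0 \qquad \text{on } \R^2.
\end{equation*}

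Setting $\psi := \partial_w E \in \mathcal C^1(\R^2)$, the latter becomes $v\psi_v + w\psi_w = 0$, i.e.~$\psi$ is invariant along every open ray through the origin. The main obstacle, and key step, is promoting this radial invariance to \emph{global} constancy of $\psi$: here the global $\mathcal C^2$ hypothesis on $\eta$ is essential, since on the slit plane $\{v \ne 0\}$ the same PDE also admits $0$-homogeneous solutions such as $\psi(v,w) = w/v$. But for $\psi \in \mathcal C^1(\R^2)$, the argument is as follows: fixing any $(v_0, w_0) \ne 0$, the map $t \mapsto \psi(tv_0, tw_0)$ is constant on $(0, +\infty)$ by the radial invariance, and as $t \to 0^+$ it tends to $\psi(0,0)$ by continuity; the analogous statement on $(-\infty, 0)$ then yields $\psi \equiv \psi(0,0) =: C$ on all of $\R^2$.

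Consequently $E(v, w) = \tilde\eta(v) + C\,w$ for some $\tilde\eta \in \mathcal C^2(\R)$, and substituting into the two compatibility formulas above gives $Q(v, w) = \tilde q(v) + \frac{C\,w}{1+v}$ with $\tilde q'(v) = \tilde\eta'(v)/(1+v)^2$; since $\bigl(v/(1+v)\bigr)' = 1/(1+v)^2$, this last identity is precisely the entropy-entropy flux condition for the scalar equation \eqref{e:scal}. Reverting to the $(u_1, u_2)$ variables yields \eqref{e:nuove}, while the converse (``if'') implication is a direct verification from the same formulas. Finally, the Hessian of $\eta(u_1, u_2) = \tilde\eta(u_1+u_2) + C(u_1 - u_2)$ is $\tilde\eta''(u_1+u_2)$ times the $2\times 2$ matrix with all entries equal to $1$, whose eigenvalues are $0$ and $2\tilde\eta''$; hence $\eta$ is convex on $\R^2$ if and only if $\tilde\eta''(v) \ge 0$ for every $v$, i.e.~if and only if $\tilde\eta$ is convex.
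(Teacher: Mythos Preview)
Your proof is correct and follows essentially the same route as the paper's: change to the $(v,w)$ coordinates, derive the single PDE $v\,\partial_{vw}E + w\,\partial_{ww}E = 0$ from the compatibility condition, observe that $\partial_w E$ is constant along rays through the origin and hence (by $\mathcal C^1$ continuity at the origin) globally constant, and read off the form of $E$ and $Q$. Your treatment is slightly more explicit than the paper's in two places---the continuity argument at the origin forcing $\psi$ to be constant, and the Hessian computation for the convexity equivalence---but there is no substantive difference in strategy.
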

\begin{proof} First, observe that $(\eta,q)$ is an entropy-entropy flux pair for system \eqref{e:chrom} if and only if
$$ (\ul, \ulq) (v,w) = (\eta,q)\left( \frac{v+w}{2} , \frac{v-w}{2} \right)$$
is an entropy-entropy flux pair for system \eqref{e:chrom2}. Thus, in the following we focus on system \eqref{e:chrom2}, having flux given by
$$
G(v,w) = \left( \begin{array}{ccc} \displaystyle \frac{v}{1+v}  \\ \\ \displaystyle \frac{w}{1+v} \\ \end{array} \right) \,.
$$
Being $\ul$ an entropy, we have the compatibility condition
\begin{equation}\label{e:compat}
{\rm curl} \, \Big( \nabla \, \ul (v,w) \cdot DG(v,w) \Big) = 0 \,.
\end{equation}
Since
$$
DG (v,w) = \left( \begin{array}{ccc} \displaystyle \frac{1}{(1+v)^2} & 0  \\ \\ \displaystyle \frac{-w}{(1+v)^2} & \displaystyle \frac{1}{1+v} \\ \end{array} \right) \,,
$$
we get from \eqref{e:compat}
$$
{\rm curl} \, \left( \frac{\partial_v \ul -w \partial_w \ul}{(1+v)^2} \; , \; \frac{\partial_w \ul}{1+v} \right) = 0 \,,
$$
namely
$$
v\partial_{vw} \ul + w \partial_{ww} \ul = 0 \,.
$$
By setting $\gamma = \partial_w \ul$, we obtain
$$
\nabla \gamma (v,w) \cdot (v,w) = 0 \qquad\text{for any vector $(v,w)$.}
$$
Hence, $\gamma$ is constant along any ray departing from the origin, and since $\gamma$ is of class $C^1$,
it has to be constant on the whole $\R^2$. Thus,
$$
\ul (v,w) = \tilde \eta (v) + C w
$$
for some real constant $C$. Being in the scalar case, $\tilde \eta$ is trivially an entropy for \eqref{e:scal}: the flux is defined by
$$
\tilde q'(v) = \tilde \eta'(v) g'(v) \,,
$$
where $g(v) = v/(1+v)$ is the flux of \eqref{e:scal}. Moreover
$\ul$ is convex in $(v,w)$ if and only if $\tilde \eta$ is convex
in $v$. Hence, up to constants, the entropy flux $\ulq$ satisfies
$$
\ulq (v,w) = \tilde q (v) + \frac{Cw}{1+v} \,.
$$
This concludes the proof of the first implication: any entropy-entropy flux pair $(\eta,q)$ for \eqref{e:chrom} satisfies \eqref{e:nuove}.

Conversely, a straightforward computation ensures that, for any entropy-entropy flux pair $(\tilde \eta, \tilde q)$ for \eqref{e:scal}, \eqref{e:nuove} defines an entropy-entropy flux pair $(\eta,q)$ for \eqref{e:chrom}.
\end{proof}

Heuristically, Lemma~\ref{l:entropy} can be interpreted as follows. By introducing the change of variables \eqref{e:change} we select a direction $w$ in the phase space $(u_1,u_2)$, along which the solution is simply transported. Consequently, the entropy dissipation in the direction $w$ is zero. Hence, imposing that $(u_1,u_2)$ is an entropy admissible solution of \eqref{e:chrom} means imposing that the entropy dissipates along the orthogonal direction $v$. This interpretation is coherent with the ideas underlying the analysis in \cite{bianchini}.

By relying on Lemma~\ref{l:entropy}, we obtain a one-to-one correspondence between entropy admissible solutions of the system \eqref{e:chrom} and distributional solutions of system \eqref{e:chrom2} such that the first component $v$ is an entropy admissible solution of the scalar conservation law \eqref{e:scal}. Indeed the following holds:

\begin{propos}\label{p:equiv} Let $\bar u_1$, $\bar u_2 \in L^\infty_\loc(\R)$. Then $(u_1 , u_2) \in L^\infty_\loc \big( [0,+\infty[ \times \R ; \R^2 \big)$ is an entropy admissible solution of \eqref{e:chrom} satisfying the initial condition
\begin{equation}
\label{e:inda}
 \big( u_1(0,\cdot) , u_2(0,\cdot) \big) = (\bar u_1, \bar u_2)
 \end{equation}
if and only if $(v,w) = (u_1+u_2,u_1-u_2)$ verifies the following conditions:
\begin{enumerate}
\item $v$ is an entropy admissible solution of the Cauchy problem
\begin{equation}\label{e:cip}
\begin{cases}
\partial_t v + \partial_x \left( \displaystyle \frac{v}{1+v} \right) = 0 \\ \\
v(0,\cdot) = \bar u_1 + \bar u_2 \,;
\end{cases}
\end{equation}
\item $w$ is a solution in the sense of distributions of
\begin{equation}\label{e:ciop}
\begin{cases}
\partial_t w + \partial_x \left( \displaystyle \frac{w}{1+v} \right) = 0 \\ \\
w(0,\cdot) = \bar u_1 - \bar u_2 \,.
\end{cases}
\end{equation}
\end{enumerate}
The initial data in~\eqref{e:cip} and~\eqref{e:ciop} are assumed in the same sense ($L^1_{\loc}-s$ or $L^{\infty}_{\loc}-w^*$) as the datum $(\bar u_1, \bar u_2)$ in~\eqref{e:inda}.
\end{propos}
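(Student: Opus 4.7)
The plan is to exploit Lemma~\ref{l:entropy} together with the linearity and invertibility of the change of variables $(u_1,u_2)\mapsto(v,w)=(u_1+u_2,u_1-u_2)$. Under this bijection, distributional solutions of~\eqref{e:chrom} correspond exactly to distributional solutions of~\eqref{e:chrom2}: summing the two equations in~\eqref{e:chrom} yields the scalar PDE in~\eqref{e:cip}, while subtracting them yields the linear continuity equation in~\eqref{e:ciop}. Likewise, the initial trace of $(v,w)$ is obtained from that of $(\bar u_1,\bar u_2)$ by the same linear map and inherits the same mode of convergence ($L^1_\loc$-strong or $L^\infty_\loc$-weak$^\ast$). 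Hence the only genuine content of the proposition is the equivalence between the system-level entropy inequality for $(u_1,u_2)$ and the scalar entropy inequality for $v$.

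For the forward direction, I would assume $(u_1,u_2)$ to be entropy admissible. Being in particular a distributional solution of~\eqref{e:chrom}, the pair $(v,w)$ automatically satisfies~\eqref{e:cip} and~\eqref{e:ciop} distributionally. To upgrade the PDE for $v$ to entropy admissibility, I apply Lemma~\ref{l:entropy} with $C=0$ and an arbitrary $C^2$ convex $\tilde\eta$: the corresponding system entropy pair $(\tilde\eta(v),\tilde q(v))$ is convex in $(u_1,u_2)$, and the system entropy inequality reads
$$
\partial_t \tilde\eta(v) + \partial_x \tilde q(v) \leq 0,
$$
which is exactly the entropy admissibility of $v$ for~\eqref{e:scal}.

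For the converse, suppose $v$ is entropy admissible for~\eqref{e:cip} and $w$ solves~\eqref{e:ciop} distributionally. Then $(u_1,u_2)=\bigl((v+w)/2,(v-w)/2\bigr)$ is automatically a weak solution of~\eqref{e:chrom} by linearity of the change of variables. Given any $C^2$ convex entropy pair $(\eta,q)$ for~\eqref{e:chrom}, Lemma~\ref{l:entropy} writes it as $\eta=\tilde\eta(v)+Cw$ and $q=\tilde q(v)+Cw/(1+v)$ with $\tilde\eta$ convex. A direct distributional computation then yields
$$
\partial_t\eta + \partial_x q \;=\; \bigl[\partial_t\tilde\eta(v)+\partial_x\tilde q(v)\bigr] \;+\; C\bigl[\partial_t w + \partial_x\bigl(w/(1+v)\bigr)\bigr] \;\leq\; 0,
$$
since the first bracket is nonpositive by entropy admissibility of $v$ and the second one vanishes by~\eqref{e:ciop}.

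The only point that requires a little care is to make sure that $1+v$ stays bounded away from $0$, so that $1/(1+v)$ is a bounded measurable coefficient and all the products appearing in the fluxes belong to $L^\infty_\loc$ and can be differentiated distributionally without issue. This is the case here because $u_1$ and $u_2$ are nonnegative (the physically meaningful setting for the chromatography system), whence $v\geq 0$ by the maximum principle for the scalar conservation law. Beyond this minor check, everything reduces to the classification of $C^2$ entropy pairs in Lemma~\ref{l:entropy} and the linearity of the change of coordinates, and I do not anticipate any further obstacle.
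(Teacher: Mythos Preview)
Your proposal is correct and follows essentially the same route as the paper: both arguments reduce the system entropy inequality, via the classification in Lemma~\ref{l:entropy}, to the scalar entropy inequality for $v$ plus the distributional equation for $w$, and both exploit the linearity of the change of variables to transfer the mode of attainment of the initial data. The paper packages the two implications into a single biconditional by letting the constant $C$ in \eqref{e:nuove} range over all of $\R$, whereas you treat the forward direction (take $C=0$) and the converse separately, but this is only a cosmetic difference. Your closing remark about $1+v$ being bounded away from zero is slightly off---if $u_1,u_2\geq 0$ then $v=u_1+u_2\geq 0$ directly, no maximum principle is needed---and in any case the paper treats the well-definedness of the fluxes as implicit in the hypothesis that $(u_1,u_2)$ is a distributional solution; this point is inessential to the argument.
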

\begin{proof}
By definition $(u_1 , u_2)$ is an entropy admissible solution of \eqref{e:chrom} if and only if
\begin{equation}\label{e:neve}
\partial_t \eta(u_1,u_2) + \partial_x q(u_1,u_2) \leq 0
\end{equation}
in the sense of distributions for any entropy-entropy flux $(\eta,q)$ with $\eta$ convex. By applying \eqref{e:nuove}, \eqref{e:neve} becomes
\begin{equation}\label{e:pippo}
\partial_t \Big[ \tilde \eta (u_1+u_2) + C (u_1 - u_2) \Big] +
\partial_x \left[ \tilde q (u_1+u_2) + \frac{C (u_1-u_2) }{1+u_1+u_2} \right] \leq 0 \,.
\end{equation}
for any entropy-entropy flux pair $(\tilde\eta,\tilde q)$, with $\tilde\eta$ convex.
Since \eqref{e:neve} is assumed to hold for any entropy-entropy flux $(\eta,q)$ with $\eta$ convex, in~\eqref{e:pippo} we can take any $C \in \R$. Thus, requiring~\eqref{e:pippo} is equivalent to imposing that $w=u_1-u_2$ is a solution
in the sense of distributions of~\eqref{e:ciop} and
$v=u_1+u_2$ is an entropy admissible solution of~\eqref{e:cip}.

Finally, as the change of variables~\eqref{e:change} is linear, the initial data in~\eqref{e:cip}
and~\eqref{e:ciop} are assumed in the same sense as the datum $(\bar u_1, \bar u_2)$ in~\eqref{e:inda}.
\end{proof}

\subsection{Well-posedness results in the class of weakly continuous functions}\label{s:wellp:w}
We first apply Theorem~\ref{t:uni1}, obtaining a well-posedness
result under the following assumptions on the initial datum $\bar
U = (\bar u_1, \bar u_2)$: both $\bar u_1$ and $\bar u_2$ are
nonnegative (which is the physical range), locally bounded, and the
sum $\bar u_1 + \bar u_2$ has locally bounded (but possibly large) total
variation.

We introduce the following set:
\begin{definition}\label{d:F}
We denote by $\F$ the set of functions $U=(u_1,u_2) : \R \to \R \times \R$ such that:
\begin{enumerate}
\item[($\F$1)] $u_1$, $u_2 \in L^\infty_\loc(\R)$;
\item[($\F$2)] $u_1 \geq 0$, $u_2 \ge 0$ a.e.~in $\R$;
\item[($\F$3)] $u_1 + u_2 \in BV_\loc(\R)$.
\end{enumerate}
\end{definition}

We are now ready to introduce our result:
\begin{theorem}\label{t:chrom1} Let $\bar U = (\bar u_1 , \bar u_2) \in
\F.$ Then there exists a unique solution
$$
U = (u_1,u_2) \in {\mathcal C}^0 \big( [0,+\infty[ ;
L^\infty_\loc(\R;\R^2) -w^* \big)
$$
in the sense of distributions of the Cauchy problem
\eqref{e:chrom}-\eqref{e:chromdata}, among those with values in $\F$
and satisfying the entropy
admissibility condition given by Definition~\ref{e:entrsys}.
\\ In addition, this unique solution $U$ satisfies the regularity property
$$
U\in {\mathcal C}^0 \big( [0,+\infty[ ;
L^1_\loc(\R;\R^2) -s \big).
$$
\end{theorem}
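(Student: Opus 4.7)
The plan is to apply Proposition~\ref{p:equiv} to reduce the Cauchy problem for~\eqref{e:chrom} to the coupled system~\eqref{e:chrom2}, namely a scalar conservation law for $v=u_1+u_2$ and a linear continuity equation for $w=u_1-u_2$, and then to handle the latter by means of Theorem~\ref{t:uni1} using $v$ itself as the transported density.

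First I would solve the scalar equation~\eqref{e:cip} with initial datum $\bar v:=\bar u_1+\bar u_2$. Since $\bar v\in BV_\loc(\R)$, $\bar v\geq 0$, and the flux $g(v)=v/(1+v)$ is smooth with $g'(v)\in(0,1]$ on $[0,+\infty[$, the Kru\v{z}kov theory recalled in the Appendix yields a unique entropy admissible solution $v$ satisfying: $v\geq 0$, $v\in L^\infty([0,+\infty[\times\R)$, $v(t,\cdot)\in BV_\loc(\R)$ with a uniform-in-$t$ bound on the total variation over each compact set, $v\in BV_\loc([0,+\infty[\times\R)$, and $v\in\mathcal C^0([0,+\infty[;L^1_\loc(\R))$.

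Next I would set $b:=1/(1+v)$, which is bounded and belongs to $BV_\loc([0,+\infty[\times\R)$, and observe that the first equation of~\eqref{e:chrom2} is exactly $\partial_t v+\partial_x(bv)=0$, so $v$ plays the role of the density $p$ in Theorem~\ref{t:uni1}. Since $\bar u_1,\bar u_2\geq 0$ forces $|\bar w|\leq\bar v$ with $\bar w:=\bar u_1-\bar u_2$, we have $\bar w\in L^\infty(\bar v)$ with $\|\bar w\|_{L^\infty(\bar v)}\leq 1$. Theorem~\ref{t:uni1} then produces a unique $w\in\mathcal C^0([0,+\infty[;L^\infty(\R)-w^*)\cap L^\infty(v)$ solving~\eqref{e:ciop}, and part (iii) of that theorem gives $|w(t,\cdot)|\leq v(t,\cdot)$ a.e.~for every $t$. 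Setting $u_1:=(v+w)/2$ and $u_2:=(v-w)/2$ produces nonnegative components, so $U=(u_1,u_2)$ takes values in $\F$, and Proposition~\ref{p:equiv} guarantees that $U$ is an entropy admissible solution of~\eqref{e:chrom}-\eqref{e:chromdata}.

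For uniqueness in the stated class, Proposition~\ref{p:equiv} applied to any candidate $U$ forces $v=u_1+u_2$ to coincide with the Kru\v{z}kov solution above and $w=u_1-u_2$ to solve~\eqref{e:ciop} distributionally; the nonnegativity built into $\F$ again gives $w\in L^\infty(v)$ with norm at most $1$, so Theorem~\ref{t:uni1}(ii) pins down $w$ uniquely. To upgrade weak continuity to strong continuity, I would apply Lemma~\ref{l:lift}(ii) separately to $u_1$ and $u_2$: each is a distributional solution of $\partial_t u_i+\partial_x(bu_i)=0$, and can be written as $u_i=v\cdot(u_i/v)$ via the convention of Remark~\ref{rmk:wp}, so the hypotheses of Lemma~\ref{l:lift}(ii) hold with $q=v$, whose strong $L^1_\loc$ continuity we already know. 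The main subtle point is exactly this last step: Theorem~\ref{t:uni1} only delivers weak-$*$ continuity in time, and the natural density $v$ may vanish on large sets where no pointwise quotient makes sense, so it is essential that Lemma~\ref{l:lift}(ii) is engineered to tolerate this degeneracy.
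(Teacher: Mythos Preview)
Your argument follows the paper's proof essentially step for step: reduce via Proposition~\ref{p:equiv}, solve the scalar law for $v$, apply Theorem~\ref{t:uni1} with $p=v$ and $b=1/(1+v)$ to handle $w$, recover nonnegativity from $|w|\leq v$, and upgrade to strong continuity via Lemma~\ref{l:lift}(ii). The paper applies Lemma~\ref{l:lift}(ii) to $w/v$ rather than to each $u_i/v$ separately, but this is immaterial.

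There is one genuine gap in your uniqueness step. You write that Proposition~\ref{p:equiv} ``forces $v=u_1+u_2$ to coincide with the Kru\v{z}kov solution above,'' but a competing candidate $U$ is only assumed to lie in $\mathcal C^0\big([0,+\infty[;L^\infty_\loc-w^*\big)$, so by Proposition~\ref{p:equiv} its $v$ attains the datum $\bar v$ only in the weak-$*$ sense. Kru\v{z}kov uniqueness as stated in Theorem~\ref{t:kru} requires the initial datum to be attained strongly in $L^1_\loc$, so you cannot invoke it directly. The paper closes this gap by appealing to Theorem~\ref{t:chenra} (Chen--Rascle): because the flux $v\mapsto v/(1+v)$ is uniformly concave, any entropy admissible solution that is weakly continuous is automatically strongly continuous at $t=0$, and only then does Kru\v{z}kov uniqueness apply. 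You should insert this observation before concluding that $v$ is uniquely determined.
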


\begin{proof}
By applying the change of variables~\eqref{e:change} and by relying on Proposition~\ref{p:equiv}, we reduce the problem to the study of the Cauchy problems~\eqref{e:cip} and~\eqref{e:ciop}.
We then proceed in several steps.

{\sc Step 1.} By applying Theorem~\ref{t:kru} and exploiting
Remark~\ref{r:local} we obtain that the first equation in~\eqref{e:cip} admits a
locally bounded entropy admissible solution $v \in \mathcal C ^0 \big( [0,+\infty[ ; L^{\infty}_\loc (\R) -w^*\big)$. Moreover, thanks to estimate~\eqref{e:comparison}, for every $t>0$ we
have $v(t,x) \ge 0$ for a.e.~$x \in \R$. Estimate~\eqref{e:bvkr} ensures that $v \in L^\infty_\loc \big( [0,+\infty[ ; BV_\loc (\R) \big)$. Finally, by relying on \eqref{e:cip} we get that $v$ has also bounded variation with respect to the time, thus $ v \in BV_\loc \big( [0,+\infty[ \times \R \big)$.

{\sc Step 2.} We now apply Theorem~\ref{t:uni1} with
$$
b(t,x) = \frac{1}{1+v(t,x)}
$$
and $p=v$, deducing existence and uniqueness for the solution $w$ of~\eqref{e:ciop} in
the class
$$
\mathcal C^0 \big( [0,+\infty[ ; L^\infty_\loc (\R) -w^* \big)
\cap L^\infty(v)\,,
$$
where $L^\infty(v)$ is defined as in Definition~\ref{d:spazione}.
Furthermore Theorem~\ref{t:uni1}(iii) implies that $|w(t,x)| \leq v(t,x)$ for a.e.~$x \in\R$,
for all $t \in [0,+\infty[$.

To show that the solutions $u_1$ and $u_2$ are nonnegative, it suffices to observe that we have $u_1 = (v+w)/2$ and $u_2 = (v-w) / 2$, and we have just shown that $|w| \leq v$. This completes the existence part.

{\sc Step 3.} We want to show that the solutions $v(t,\cdot)$ and
$w(t,\cdot)$ constructed above are strongly continuous with values
in $L^1_{\loc}(\R^d)$. The strong continuity of $t \mapsto v(t,\cdot)$ is a
consequence of Theorem~\ref{t:chenra} (since $v \mapsto v/(1+v)$ is uniformly concave we can exploit the considerations in Remark~\ref{r:concave}).
Concerning $w$, it suffices to apply again
Lemma~\ref{l:lift}(ii) with $q = v$ (which has just been shown to be strongly continuous) and $u=w/v$ (see Remark~\ref{rmk:wp}).

{\sc Step 4.} We finally show uniqueness. By the uniform concavity
of the flux function $v \mapsto v / (1+v)$ in \eqref{e:cip},
Theorem~\ref{t:chenra} implies that every weakly continuous
entropy admissible solution of \eqref{e:cip} is in fact strongly
continuous at $t=0$. Hence the uniqueness of $v$ follows from
Theorem~\ref{t:kru}. Concerning $w$, it is sufficient to observe
that, since condition ($\F$2) gives $u_1$ and $u_2$ are
nonnegative, we have
$$
|w|=|u_1-u_2| \leq u_1+u_2=v \,.
$$
Thus $w \in L^\infty(v)$, and we get uniqueness from Step 2.
\end{proof}

As an immediate consequence of Theorem~\ref{t:chrom1}, we obtain:
\begin{corol}\label{c:semigroup1} There exists a unique semigroup
$$
S_t : [0,+\infty[ \; \times \; \F \to \F
$$
satisfying:
\begin{enumerate}
\item[(i)] $S_t$ is continuous with respect to $t$ in $L^\infty_\loc(\R;\R^2)$ endowed with the weak-$\ast$ topology;
\item[(ii)] For every $\bar U = (\bar u_1 , \bar u_2) \in \F$, $S_t \bar U$ provides a distributional solution of the Cauchy problem~\eqref{e:chrom}-\eqref{e:chromdata} satisfying the entropy admissibility condition given by Definition~\ref{e:entrsys}.
\end{enumerate}
\end{corol}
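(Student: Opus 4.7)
The plan is to define the semigroup directly from the solutions supplied by Theorem~\ref{t:chrom1} and then check both the semigroup identity and uniqueness via the uniqueness statement in that theorem. Concretely, for every $\bar U \in \F$ I would set $S_t \bar U := U(t, \cdot)$, where $U \in \mathcal C^0([0,+\infty[; L^\infty_\loc(\R;\R^2)-w^*)$ is the unique entropy admissible distributional solution of \eqref{e:chrom}-\eqref{e:chromdata} provided by Theorem~\ref{t:chrom1}. Three things then need to be verified: that $S_t$ maps $\F$ into itself, that properties (i) and (ii) hold, and that the resulting map satisfies $S_0 = \mathrm{id}$ and $S_{t+s} = S_t \circ S_s$, with uniqueness among semigroups satisfying (i)-(ii).

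First I would verify $S_t \bar U \in \F$. Conditions ($\F$1) and ($\F$2) are built into Theorem~\ref{t:chrom1}: local boundedness of $u_1(t,\cdot)$ and $u_2(t,\cdot)$ comes from the regularity class, while nonnegativity was established in Step~2 of its proof via $|w(t,\cdot)| \leq v(t,\cdot)$. The only condition requiring a separate remark is ($\F$3), namely $v(t,\cdot) = u_1(t,\cdot) + u_2(t,\cdot) \in BV_\loc(\R)$ for every $t \ge 0$; this is inherited from $v(0,\cdot) = \bar u_1 + \bar u_2 \in BV_\loc(\R)$ via the $BV$ propagation estimate~\eqref{e:bvkr} for the scalar entropy solution, as already invoked in Step~1 of the proof of Theorem~\ref{t:chrom1}. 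Properties (i) and (ii) are then immediate from the regularity class and the entropy admissibility of $U$.

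To establish the semigroup identity, I would fix $s \geq 0$ and set $\tilde U(\tau, \cdot) := U(\tau + s, \cdot)$. Since translation in time preserves both the distributional form of \eqref{e:chrom} and the entropy inequalities of Definition~\ref{e:entrsys}, and since the weak-$*$ continuity class is preserved as well, $\tilde U$ is a weakly continuous entropy admissible distributional solution of \eqref{e:chrom} with initial datum $\tilde U(0,\cdot) = U(s,\cdot) = S_s \bar U$, which lies in $\F$ by the previous paragraph. The uniqueness part of Theorem~\ref{t:chrom1} applied to the Cauchy problem with datum $S_s \bar U$ then forces $\tilde U(\tau,\cdot) = S_\tau(S_s \bar U)$, i.e.\ $S_{t+s} \bar U = (S_t \circ S_s) \bar U$. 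Uniqueness of the semigroup is essentially the same observation in reverse: any other semigroup $\tilde S_t$ satisfying (i)-(ii) produces, for each $\bar U \in \F$, a weakly continuous entropy admissible solution with values in $\F$, which must coincide with $U$ by Theorem~\ref{t:chrom1}, giving $\tilde S_t \bar U = S_t \bar U$.

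Since the entire content of the corollary is packaged inside Theorem~\ref{t:chrom1}, I do not anticipate any real analytical obstacle; the only point worth pausing on is the preservation of condition ($\F$3) under the flow, and even there the work was already carried out inside Theorem~\ref{t:chrom1}.
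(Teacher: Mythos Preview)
Your argument is correct and is exactly the standard unpacking of the paper's one-line justification ``an immediate consequence of Theorem~\ref{t:chrom1}'': define $S_t$ via the unique solution, check $\F$-invariance through ($\F$1)--($\F$3), and derive both the semigroup law and uniqueness from the uniqueness clause of Theorem~\ref{t:chrom1} applied to time-translates. The paper provides no further proof, so there is nothing to compare beyond this.
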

\begin{remark}
By relying on the proof of Theorem~\ref{t:chrom1}, one actually gets a slightly sharper result than the one given in the statement of the theorem. Namely, for any $\bar U = (\bar u_1 , \bar u_2) \in \F$ there exists a unique
$$
U = (u_1,u_2) \in {\mathcal C}^0 \big( [0,+\infty[ ; L^\infty_\loc(\R;\R^2) -w^* \big)
$$
solution in the sense of distributions of the Cauchy problem
\eqref{e:chrom}-\eqref{e:chromdata}, satisfying the entropy
admissibility condition given by Definition~\ref{e:entrsys} and such that  $U(t,\cdot) = \big( u_1(t,\cdot), u_2(t,\cdot) \big)$ verifies  conditions ($\F$1) and ($\F$2) in Definition~\ref{d:F}. For such a solution, property ($\F$3) is automatically satisfied by $U(t,\cdot) = \big( u_1(t,\cdot), u_2(t,\cdot) \big)$ for every $t > 0$.
\end{remark}

\subsection{Well-posedness results in the class of strongly continuous functions}
\label{s:wellp:s}
We now apply Theorem~\ref{t:uni2}, obtaining well-posedness results for~\eqref{e:chrom}-\eqref{e:chromdata} under assumptions on the initial data $(\bar u_1, \bar u_2)$ different from those considered in Section~\ref{s:wellp:w}, and in a different class of solutions. Namely, we relax the regularity assumptions
imposing only that $(\bar u_1, \bar u_2) \in L^{\infty}_{\loc} (\R)$.
The price we have to pay is requiring that the sum $\bar u_1 + \bar u_2$ is
well-separated from $0$ on any compact set, in the sense of~\eqref{e:pappa}.

We first introduce the following set:
\begin{definition}\label{d:G}
We denote by $\G$ the set of functions $U=(u_1,u_2) : \R \to \R \times \R$ such that
\begin{enumerate}
\item[($\G$1)] $u_1$, $u_2 \in L^\infty_\loc(\R)$;
\item[($\G$2)] $u_1 \geq 0$, $u_2 \ge 0$ a.e.~in $\R$;
\item[($\G$3)] for every $R>0$ there exists $\delta_R >0$ such that
\begin{equation}\label{e:pappa}
u_1 + u_2 \geq \delta_R \qquad \text{a.e.~in $]-R,R[$.}
\end{equation}
\end{enumerate}
\end{definition}
We are now ready to state our result:
\begin{theorem}\label{t:chrom2} Let $\bar U = (\bar u_1 , \bar u_2) \in \G$.
 Then there exists a unique
$$
U = (u_1,u_2) \in {\mathcal C}^0 \big( [0,+\infty[ ; L^1_\loc(\R;\R^2) -s\big) \cap L^\infty_\loc \big( [0,+\infty[ \times \R ; \R^2 \big)
$$
solution in the sense of distributions of the Cauchy problem
\eqref{e:chrom}-\eqref{e:chromdata} and
satisfying the entropy admissibility condition given by Definition~\ref{e:entrsys}.
In addition, the unique solution $U$ takes its values in $\G$.
\end{theorem}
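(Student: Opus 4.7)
The plan is to apply Proposition~\ref{p:equiv} to reduce to the decoupled system~\eqref{e:cip}--\eqref{e:ciop}, then to handle $v=u_1+u_2$ by scalar conservation law theory and to treat $w=u_1-u_2$ by Theorem~\ref{t:uni2} with the role of the nearly incompressible density played by $v$ itself. First I would construct $v$ via Theorem~\ref{t:kru}; the uniform concavity of the flux $v\mapsto v/(1+v)$ together with Theorem~\ref{t:chenra} (via Remark~\ref{r:concave}) and Oleinik's regularization effect deliver that $v\in BV_\loc(]0,+\infty[\times\R)$ with $|Dv(t,\cdot)|(B_R)\in L^\infty_\loc(]0,+\infty[)$ and $t\mapsto v(t,\cdot)\in\mathcal C^0([0,+\infty[; L^1_\loc(\R)-s)$. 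Assumption ($\G$3) enters in ensuring that $v$ stays strictly positive on compact space-time sets: since the flux is $1$-Lipschitz, the comparison principle for entropy solutions applied to $v$ and to the entropy solution emanating from the compactly supported floor datum $\delta_{R+T}\chi_{B_{R+T}}$, combined with finite propagation speed, yields $\inf_{[0,T]\times B_R}v>0$.

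With $v$ in place, set $b:=1/(1+v)$, which inherits $BV_\loc(]0,+\infty[\times\R)$ from $v$ and is bounded by $1$. The identity $\partial_t v+\partial_x(bv)=0$ together with the two-sided bounds on $v$ and its strong continuity show that $b$ is locally strongly nearly incompressible in the sense of Definition~\ref{d:nearly} with admissible density $\rho:=v$, and the regularity of $v$ matches the hypothesis of Theorem~\ref{t:uni2}. Applying that theorem with datum $\bar w:=\bar u_1-\bar u_2$ produces the unique $w\in\mathcal C^0([0,+\infty[; L^1_\loc(\R)-s)\cap L^\infty_\loc$ solving~\eqref{e:ciop}. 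Undoing the change of variables and invoking Proposition~\ref{p:equiv}, the pair $(u_1,u_2):=((v+w)/2,(v-w)/2)$ is an entropy admissible solution of~\eqref{e:chrom}--\eqref{e:chromdata} in the required class. To check that $U$ takes values in $\G$ (equivalently, $|w|\leq v$), note that $|\bar w|\leq\bar v$ by ($\G$2) and that since $v$ is strictly positive on compacts the quotient $w/v$ is well-defined and locally bounded. Lemma~\ref{l:lift}(i) with $q=v$ and $\beta(z)=|z|$ shows that $|w|$ solves the same continuity equation, hence so does $v-|w|$ by linearity; writing $v-|w|=v\,\tilde u$ with $\tilde u=(v-|w|)/v$ and applying Lemma~\ref{l:lift}(iii) with $u_1\equiv 0$ and $u_2=\tilde u^-$ propagates the initial identity $v\cdot\tilde u^-(0,\cdot)=0$ to every $t\geq 0$, giving $(|w|-v)^+=0$ and thus $u_1,u_2\geq 0$.

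Uniqueness follows by running the reduction in reverse: for any competitor $U'$ in the stated class, $v':=u_1'+u_2'\in\mathcal C^0([0,+\infty[; L^1_\loc(\R)-s)$ is an entropy admissible solution of~\eqref{e:cip}, so $v'=v$ by Theorem~\ref{t:kru}; then $w':=u_1'-u_2'\in\mathcal C^0([0,+\infty[; L^1_\loc(\R)-s)\cap L^\infty_\loc$ solves~\eqref{e:ciop} with the same $b$, and Theorem~\ref{t:uni2} forces $w'=w$. I expect the main obstacles to be the lower bound on $v$, which is what makes $b$ well-defined and Theorem~\ref{t:uni2} applicable and which rests on a concrete comparison argument combined with finite propagation speed, and the nonnegativity step, where the two applications of Lemma~\ref{l:lift} must be organized carefully, using the strict positivity of $v$ on compacts to legalize the quotients $w/v$ and $(v-|w|)/v$.
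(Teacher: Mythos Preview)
Your overall architecture is the paper's: reduce to~\eqref{e:cip}--\eqref{e:ciop} via Proposition~\ref{p:equiv}, build $v$ by scalar theory (Kru\v{z}kov, Ole{\u\i}nik, Chen--Rascle), then feed $b=1/(1+v)$ and $\rho=v$ into Theorem~\ref{t:uni2}. Uniqueness is also handled the same way. Two points deserve comment.

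\emph{Lower bound on $v$.} Your comparison with the solution emanating from $\delta_{R+T}\chi_{B_{R+T}}$, then finite speed, is correct but roundabout. The paper simply compares $v$ with the \emph{constant} solution $\delta_{R+Lt}$ via~\eqref{e:comparison}: since $\bar v\geq\delta_{R+Lt}$ on $]-R-Lt,R+Lt[$, one gets $v(t,\cdot)\geq\delta_{R+Lt}$ on $]-R,R[$ in one line.

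\emph{Nonnegativity $u_1,u_2\geq 0$.} Here you diverge from the paper. The paper argues by approximation: the unique strongly continuous solution of $\partial_t z+\partial_x(bz)=0$ with datum $\bar z\geq 0$ is the weak-$\ast$ limit of solutions $z_\eps\geq 0$ to regularized equations (cf.\ the existence proof of Theorem~\ref{t:uni2}), hence $z\geq 0$; apply this to $z=v\pm w=2u_{1,2}$. Your renormalization route is viable but has two technical loose ends. First, Lemma~\ref{l:lift} requires $q\in BV_\loc([0,+\infty[\times\R)$, whereas here $v\in BV_\loc(]0,+\infty[\times\R)$ only; you must apply the lemma on $[\tau,+\infty[$ and take $\tau\downarrow 0$. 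Second, your invocation of Lemma~\ref{l:lift}(iii) with $u_2=\tilde u^-$ presupposes that $v\tilde u^-=(v-|w|)^-$ itself solves the continuity equation, which needs an additional renormalization (Lemma~\ref{l:lift}(i) with $\beta(z)=z^-$ applied to $\tilde u$) that you have not stated. Once you know $(v-|w|)^-$ solves on $]0,+\infty[$ and is strongly continuous at $t=0$ with value $0$, the clean way to conclude is not Lemma~\ref{l:lift}(iii) (which again needs $BV$ from $t=0$) but the uniqueness statement of Theorem~\ref{t:uni2} itself, which is designed precisely for $BV_\loc(]0,+\infty[\times\R)$ vector fields and strongly continuous solutions.
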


\begin{proof}
As in the proof of Theorem~\ref{t:chrom1}, we reduce to study the Cauchy problems~\eqref{e:cip} and~\eqref{e:ciop}. We then proceed in several steps.

{\sc Step 1.} By applying Theorem~\ref{t:kru} and exploiting
Remark~\ref{r:local} (with $f(z)=z/(1+z)$) we obtain that~\eqref{e:cip} admits a
nonnegative entropy admissible solution $v \in L^\infty_\loc\big( [0,+\infty[
\times \R\big)$. Observing that constant functions are entropy
admissible solutions of~\eqref{e:cip}, thanks to
estimate~\eqref{e:comparison} and~\eqref{e:pappa} we easily obtain
that, for every $R$, $t>0$,
\begin{equation}\label{e:pappabuona}
v(t,x) \geq \delta_{R+Lt} \qquad \text{for a.e.~$x \in]-R,R[$.}
\end{equation}
Proposition~\ref{p:reg} combined with Remark~\ref{r:concave} ensures that
$$
v \in L^\infty_\loc \big( ]0,+\infty[ ; BV_\loc (\R) \big) \,,
$$
and using \eqref{e:cip} we deduce that
$$
v \in BV_\loc \big( ]0,+\infty[ \times \R \big) \,.
$$
The fact that
\begin{equation}\label{e:strr}
v \in \mathcal C ^0 \big( [0,+\infty[ ; L^1_\loc (\R) -s \big)
\end{equation}
is a consequence of Theorem~\ref{t:chenra}, thanks to the uniform concavity of $v \mapsto v/(1+v)$.

{\sc Step 2.} Let
\begin{equation}\label{e:b}
b(t,x) = \frac{1}{1+v(t,x)} \,,
\end{equation}
where $v$ is the same function as in Step 1. Then $b$ is a bounded
vector field belonging to $BV_\loc \big( ]0,+\infty[ \times \R
\big)$. Moreover $b$ is locally strongly nearly incompressible:
by taking $\rho = v$ in~\eqref{e:rhobound} and
\eqref{e:rhoeq} we get that Definition~\ref{d:nearly} is satisfied,
as $v$ solves
$$
\partial_t v + \partial_x (bv) = 0
$$
and verifies~\eqref{e:pappabuona} and \eqref{e:strr}. Then, Theorem~\ref{t:uni2} ensures that the Cauchy problem~\eqref{e:ciop} admits a unique solution $w \in {\mathcal
C}^0 \big( [0,+\infty[ ; L^1_\loc(\R) -s \big)$.

{\sc Step 3.} Let $b$ be the same vector field as in~\eqref{e:b}. Then $v$ and $w$ are the unique solutions
(by Theorem~\ref{t:uni2}) of
\begin{equation}\label{e:claim}
\begin{cases}
\partial_t z + \partial_x (bz) = 0 \\
z(0,x) = \bar z (x)\,,
\end{cases}
\qquad \qquad z \in \mathcal{C}^0 \big( [0,+\infty[ ; L^1_\loc(\R)-s \big)\,,
\end{equation}
with $\bar z = \bar v$ and $\bar z = \bar w$ respectively. We claim that the Cauchy
problem~\eqref{e:claim} satisfies the following comparison principle:
\begin{equation}\label{e:claim2}
\bar z  (x) \geq 0 \; \text{for a.e.~$x\in\R$} \qquad \Longrightarrow \qquad
\text{for every $t \ge 0$, } \; z(t,x) \geq 0 \; \text{ for a.e.~$x\in\R$.}
\end{equation}
This implication is proved in Step 4. To conclude with the existence part, we apply~\eqref{e:claim2} with $z=v+w$ and $z=v-w$, noticing that at $t=0$ we have $v+w = 2\bar u_1$ and $v-w = 2\bar u_2$, which are both positive by condition ($\G$2) in Definition \ref{d:G}.

{\sc Step 4.} We now show implication~\eqref{e:claim2}. As it
comes from the existence part in the proof of
Theorem~\ref{t:uni2}, the unique solution of~\eqref{e:claim} is obtained
by approximating the vector field $b$ with smooth vector fields
$b_\eps \to b$ strongly in $L^1_\loc\big( [0,+\infty[ \times \R
\big)$ and considering the solutions $z_\eps$ of the corresponding
problems
$$
\begin{cases}
\partial_t z_\eps + \partial_x (b_\eps z_\eps) = 0 \\
z_\eps(0,x) = \bar z *\eta_\eps(x)\,.
\end{cases}
$$
(With the notation of the proof of Theorem~\ref{t:uni2},
$z_\eps=\l_\eps\rho_\eps$.) Since $\bar z \geq 0$ a.e.~in $\R$,
for every $t>0$ we have $z_\eps(t,x) \geq 0$ for a.e.~$x \in \R$.
Hence, observing that $z_\eps$ converges to $z$ weakly-$\ast$ in
$L^\infty_\loc\big( [0,+\infty[ \times \R \big)$, we conclude that
$z(t,x) \ge 0$ for a.e.~$(t,x) \in [0,+\infty[ \times \R$. Since
$z \in \mathcal{C}^0 \big( [0,+\infty[ ; L^1_\loc(\R) -s \big)$,
implication~\eqref{e:claim2} follows.

{\sc Step 5.} Uniqueness is a simple issue: since $v$ is by assumption strongly continuous, it is unique by Theorem~\ref{t:kru}, while $w$ is unique thanks to the uniqueness part of Theorem~\ref{t:uni2}. \end{proof}

Theorem~\ref{t:chrom2} guarantees, in particular, that the domain $\G$ is invariant for admissible solutions of~\eqref{e:chrom}: if the initial datum $(\bar u_1, \bar u_2)$ belongs to $\G$,
then $\big(u_1(t, \cdot), u_2(t, \cdot) \big) \in \G$ for every $t \ge 0$.
The following is an immediate consequence of Theorem~\ref{t:chrom1}:
\begin{corol}\label{c:semigroup2} There exists a unique semigroup
$$
S_t : [0,+\infty[ \; \times \; \G \to \G
$$
satisfying:
\begin{enumerate}
\item[(i)] $S_t$ is continuous with respect to $t$ in $L^1_\loc(\R;\R^2)$ endowed with the strong topology;
\item[(ii)] For every $\bar U = (\bar u_1 , \bar u_2) \in \G$, $S_t \bar U$ provides a locally bounded distributional solution of the Cauchy problem~\eqref{e:chrom}-\eqref{e:chromdata} satisfying the entropy admissibility condition given by Definition~\ref{e:entrsys}.
\end{enumerate}
\end{corol}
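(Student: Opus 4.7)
The plan is to read off the semigroup directly from Theorem~\ref{t:chrom2}: for each $\bar U\in\G$ define $S_t\bar U := U(t,\cdot)$, where $U\in\mathcal C^0([0,+\infty[\,;L^1_\loc(\R;\R^2)-s)\cap L^\infty_\loc$ is the unique strongly continuous entropy admissible distributional solution of \eqref{e:chrom}--\eqref{e:chromdata} furnished by that theorem. The last sentence of Theorem~\ref{t:chrom2} guarantees $U(t,\cdot)\in\G$ for every $t\ge 0$, so $S_t$ is well-defined as a map $[0,+\infty[\,\times\G\to\G$, and properties (i) and (ii) are immediate from, respectively, the strong $L^1_\loc$ time continuity and the entropy admissibility asserted in Theorem~\ref{t:chrom2}.

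The semigroup identity $S_{t+s}=S_t\circ S_s$ is obtained as usual by a uniqueness argument: fix $s\ge 0$ and $\bar U\in\G$ and set $\bar V:=S_s\bar U\in\G$. Both maps $t\mapsto S_{t+s}\bar U$ and $t\mapsto S_t\bar V$ lie in $\mathcal C^0([0,+\infty[\,;L^1_\loc-s)\cap L^\infty_\loc$, are distributional entropy admissible solutions of \eqref{e:chrom}, and at $t=0$ they both equal $\bar V$ (by the strong $L^1_\loc$ continuity at $s$ of the solution starting from $\bar U$, which identifies $S_s\bar U$ pointwise in time as an element of $\G$). The uniqueness statement in Theorem~\ref{t:chrom2} then forces them to coincide for all $t\ge 0$.

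Uniqueness of the semigroup itself is the same observation applied once more: if $\widetilde S_t$ is another map $[0,+\infty[\,\times\G\to\G$ satisfying (i) and (ii), then for each $\bar U\in\G$ the trajectory $t\mapsto\widetilde S_t\bar U$ belongs to $\mathcal C^0([0,+\infty[\,;L^1_\loc-s)\cap L^\infty_\loc$, is an entropy admissible distributional solution of \eqref{e:chrom}--\eqref{e:chromdata}, and hence must coincide with $S_t\bar U$ by the uniqueness clause of Theorem~\ref{t:chrom2}.

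There is essentially no obstacle here, since Theorem~\ref{t:chrom2} has already done the work; the only point requiring any care is checking that $\bar V=S_s\bar U$ really lies in $\G$ (so that Theorem~\ref{t:chrom2} can be reapplied from time $s$), but this is exactly the invariance of $\G$ stated as part of Theorem~\ref{t:chrom2}.
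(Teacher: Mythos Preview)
Your proposal is correct and matches the paper's approach: the paper simply states that the corollary is an immediate consequence of Theorem~\ref{t:chrom2} and gives no further proof, and your argument spells out exactly this derivation (definition of $S_t$ via the unique solution, invariance of $\G$, and the semigroup law and uniqueness via the uniqueness clause).
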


\begin{remark}[Extension to the $k \times k$ system]\label{rmk:k times k}
The $k\times k$ chromatography system is the following:
$$
\partial_t u_i + \partial_x \left( \displaystyle \frac{u_i}{1+u_1+\ldots+u_k} \right) =
0, \qquad i=1,\ldots,k\,.
$$
By doing the change of variable $v=u_1+\ldots+u_k$, $w_i=u_{i+1}$
for $i=1,\ldots,k-1$, our system becomes
$$
\begin{cases}
\partial_t v + \partial_x \left( \displaystyle \frac{v}{1+v} \right) = 0 \\ \\
\partial_t w_i + \partial_x \left( \displaystyle \frac{w_i}{1+v} \right) = 0
\,,\qquad i=1,\ldots,k-1\,.
\end{cases}
$$
All the results proved in this section, including the classification
of the entropies-entropy fluxes given in Lemma \ref{l:entropy}
readily extend to this case.
%In particular,  easily generalizes by
%observing that, if $(\eta,q)$ is an entropy-entropy flux for the $k
%\times k$ system, then $(\eta,q)$ is also an entropy-entropy flux
%for any $2 \times 2$ subsystem, also long as we look at the
%remaining $(k-2)$-variables as constant parameters.
\end{remark}

\section{Applications to the Keyfitz and Kranzer system}
\label{s:kk}
As a byproduct of our analysis, in this section we apply the results discussed in Section~\ref{s1} to the Keyfitz and Kranzer system
\begin{equation}
\label{e:kk}
\left\{
\begin{array}{lll}
         \partial_t U +\displaystyle{ \sum_{\alpha = 1}^d \frac{\partial }{ \partial x_{\alpha}}
         \Big( f_{\alpha} ( | U |) U \Big) = 0 } \\ \\
         U(0, x) = \bar U (x)\,.
\end{array}
\right.
\end{equation}
The function $U$ takes values in $\R^k$ and depends on $(t, x) \in [0, + \infty [ \, \times \R^d$. For each $\alpha = 1, \dots , d$ the function $f_{\alpha} : \R \to \R$
is smooth. As mentioned in the introduction, to study~\eqref{e:kk} one can first obtain the modulus of the solution $\rho = |U|$ by solving
 \begin{equation}
\label{e:rho}
\left\{
\begin{array}{lll}
         \partial_t \rho +
         \div \big( f (\rho) \rho \big) =0 \\ \\
         \rho (0, x) = | \bar U |(x)\,,
\end{array}
\right.
\end{equation}
where $f= (f_1, \dots, f_d)$. Then, one gets the solution $U=
(\rho \theta_1, \dots , \rho \theta_k)$ by solving the continuity
equations for the components of the ``angular'' part
\begin{equation}\label{e:theta}
    \partial_t \big( \rho \theta_i \big) + \div \big(  f (\rho) \rho \theta_i \big) =0 \,,
    \qquad i=1, \dots, k \,.
\end{equation}
This strategy has been exploited by Ambrosio and De Lellis~\cite{ambdel} and by Ambrosio, Bouchut and De Lellis~\cite{ABDL}, and was inspired by considerations in Bressan~\cite{Bre:illposed}.
Before discussing what we get by combining this strategy with Theorems~\ref{t:uni1} and~\ref{t:uni2}, we need to provide the following definition:
\begin{definition}
\label{d:re}
Let $U$ be a locally bounded function solving~\eqref{e:kk} in the sense of distributions. Then $U$ is a \emph{renormalized entropy solution} if $\rho = |U|$ is an entropy admissible solution of~\eqref{e:rho} (in the sense of Definition~\ref{e:entrsys}) such that ${\lim_{t \to 0^+} \rho (t, \cdot) =|  \bar U|}$ in the strong topology of $L^1_\loc(\R^d)$.
\end{definition}
As pointed out in Ambrosio, Bouchut and De Lellis~\cite{ABDL} by relying on a classification of the entropies for~\eqref{e:kk} due to Frid~\cite{Frid}, under quite general assumptions on the function $f$ any renormalized entropy solution is indeed an entropy admissible solution of~\eqref{e:kk} (see for example Dafermos~\cite[Chapter IV]{Daf:book} for the definition of entropy admissible solution of a system of conservation laws in several space dimensions: this notion is the multidimensional analogue of Definition~\ref{e:entrsys}). A complete proof of this implication can be also found in the notes by De Lellis~\cite[Proposition 5.7]{del:notes}. Conversely, an example discussed in Bressan~\cite[Section 3]{Bre:illposed} shows that, even in the one-dimensional case, in general there might be entropy admissible solutions of~\eqref{e:kk} that are not renormalized entropy solutions. The same example shows that, in general, entropy admissible solutions of~\eqref{e:kk} are not unique.

Existence, uniqueness and stability results for renormalized entropy solutions of~\eqref{e:kk} were obtained in Ambrosio and De Lellis~\cite{ambdel} and Ambrosio, Bouchut and De Lellis~\cite{ABDL}. In particular, in~\cite{ABDL} the Cauchy datum $\bar U$ in \eqref{e:kk} satisfies $|\bar U| \in L^{\infty} \cap BV_\loc$ and $|\bar U|$ can attain the value $0$.

By applying Theorem~\ref{t:uni2}, we manage to relax the assumption $|\bar U| \in L^{\infty} \cap BV_\loc$ by requiring only $|\bar U| \in L^{\infty}$. However, the price we have to pay is that we restrict to the one-dimensional case, we assume that the map $\rho \mapsto f(\rho) \rho$ is uniformly convex, and we impose that the initial datum $|\bar U|$ is well separated from $0$. Similar
results were already known: for example in
Freist{\"u}hler~\cite{freistuhler} and Panov~\cite{Panov:onthe} well-posedness theorems in one
space dimension
for
$L^{\infty}$ initial data were obtained under more general
assumptions than those we consider here. However, Theorem~\ref{t:kk1} below quickly follows from Theorem~\ref{t:uni2}, so for completeness we provide
the details of the proof.
\begin{theorem}
\label{t:kk1}
         Assume that the following conditions are satisfied:
         \begin{enumerate}
         \item there exists $c>0$ such that $\big[ f (\rho) \rho \big]'' \ge c $ on $ \R$;
         \item $\bar U \in L^{\infty} (\R; \R^k)$;
         \item For every $R >0$, there exists $C_R>0$ such that
         $$
             |  \bar U (x) | \ge \frac{1}{C_R} >0 \quad \textrm{for a.e. $x \in [-R, R]$.}
         $$
         \end{enumerate}
         Then the Cauchy problem
         \begin{equation}
\label{e:kko}
\left\{
\begin{array}{lll}
         \partial_t U + \partial_x
         \big[ f ( | U |) U \big] = 0   \\ \\
         U(0, x) = \bar U (x)  \\
\end{array}
\right.
\end{equation}
         admits a unique renormalized entropy solution
        $$
            U \in \mathcal C^0 \big( [0, + \infty[ ; L^1_\loc (\R, \R^k)-s \big)
            \cap L^\infty \big( [0,+\infty[ \times \R ; \R^k \big) \,.
        $$
\end{theorem}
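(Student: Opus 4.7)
The strategy is the Ambrosio--De Lellis decomposition: first solve the scalar conservation law~\eqref{e:rho} for $\rho=|U|$, then recover each component $U_i$ from the linear continuity equation driven by $b:=f(\rho)$, and finally verify the compatibility $|U|=\rho$, which closes the coupling and qualifies $U$ as a renormalized entropy solution.

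First, I would apply Theorem~\ref{t:kru} to~\eqref{e:rho} to obtain a unique entropy admissible solution $\rho\in L^\infty$ with datum $|\bar U|$, strongly continuous at $t=0$ by Theorem~\ref{t:chenra} (using the uniform convexity of $\rho\mapsto f(\rho)\rho$ from hypothesis~(1), in the spirit of Remark~\ref{r:concave}). The comparison estimate~\eqref{e:comparison}, tested against nonnegative constant sub-solutions and combined with hypothesis~(3), gives a local lower bound $\rho(t,x)\ge\delta_{R,T}>0$ on $[0,T]\times[-R,R]$. The same uniform convexity triggers the Oleinik regularization via Proposition~\ref{p:reg}, yielding $\rho\in L^\infty_\loc\bigl(]0,+\infty[;BV_\loc(\R)\bigr)$ with $|D\rho(t,\cdot)|(B_R)\in L^\infty_\loc\bigl(]0,+\infty[\bigr)$; using the scalar equation this upgrades to $\rho\in BV_\loc\bigl(]0,+\infty[\times\R\bigr)$.

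Next, I would set $b:=f(\rho)$. Since $f$ is smooth and the range of $\rho$ is bounded, $f$ is Lipschitz on this range, and the BV chain rule gives $b\in BV_\loc\bigl(]0,+\infty[\times\R\bigr)$ with the analogous quantitative bound on $|Db(t,\cdot)|(B_R)$. The local lower bound on $\rho$, its strong $L^1_\loc$ continuity, and the identity $\partial_t\rho+\partial_x(b\rho)=0$ together make $b$ locally strongly nearly incompressible with density $\rho$ in the sense of Definition~\ref{d:nearly}. Applying Theorem~\ref{t:uni2} componentwise then produces, for each $i=1,\dots,k$, a unique solution $U_i\in\mathcal C^0\bigl([0,+\infty[;L^1_\loc(\R)-s\bigr)\cap L^\infty_\loc$ of $\partial_t U_i+\partial_x(f(\rho)U_i)=0$ with initial datum $\bar U_i$. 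Set $U:=(U_1,\dots,U_k)$.

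The main obstacle is the compatibility check $|U|=\rho$, needed both to close the coupling $b=f(|U|)$ and to qualify $U$ as a renormalized entropy solution. For this I would invoke Lemma~\ref{l:lift}(i) with $q=\rho$, $u=U_i/\rho$ (well-defined and bounded by Remark~\ref{rmk:wp} thanks to the local lower bound on $\rho$), and $\beta(z)=z^2$; summing over $i$, the resulting identities give that $W:=|U|^2/\rho$ solves the scalar continuity equation $\partial_t W+\partial_x(bW)=0$ with initial trace $|\bar U|$, exactly as $\rho$ itself does, and $W$ is strongly continuous in $L^1_\loc$ by the chain rule. The uniqueness clause of Theorem~\ref{t:uni2} then forces $W=\rho$, i.e.~$|U|=\rho$ almost everywhere. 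Uniqueness of the whole $U$ is immediate: any renormalized entropy solution $U'$ satisfies $|U'|=\rho$ by Definition~\ref{d:re} together with the uniqueness in Theorem~\ref{t:kru}, after which each $U'_i$ solves the same linear continuity equation as $U_i$ and so coincides with it by Theorem~\ref{t:uni2}.
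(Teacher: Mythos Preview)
Your proposal is correct and follows essentially the same route as the paper: construct $\rho$ via Kru\v{z}kov's theorem, verify that $b=f(\rho)$ is locally strongly nearly incompressible with density $\rho$ (using the Ole{\u\i}nik regularization and Chen--Rascle strong continuity), solve each component via Theorem~\ref{t:uni2}, and close the loop by showing $|U|^2/\rho$ and $\rho$ solve the same continuity problem. The only cosmetic difference is that the paper applies Lemma~\ref{l:lift}(i) on $[\tau,+\infty[$ for arbitrary $\tau>0$ (since the $BV$ hypothesis on $q=\rho$ is only available away from $t=0$), a detail you leave implicit.
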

\begin{proof}
{\sc Existence.} The proof exploits the splitting of \eqref{e:kk} in the coupling between \eqref{e:rho} and \eqref{e:theta}. We proceed in several steps.

{\sc Step 1.} We first construct the modulus $\rho$ of the solution. Indeed, Theorem~\ref{t:kru}
ensures that there exists a unique locally bounded entropy admissible solution $\rho$ to
\begin{equation}\label{e:eqrho}
\partial_t \rho + \partial_t \big( f(\rho) \rho \big) = 0
\end{equation}
which satisfies
\begin{equation}\label{e:datorho}
\lim_{t \to 0^+} \rho (t, \cdot) =|  \bar U| \qquad \text{strongly in $L^1_\loc (\R)$.}
\end{equation}

{\sc Step 2.} Our goal is now applying Theorem~\ref{t:uni2}. Let us check that all the hypotheses are satisfied. We first show that
\begin{equation}
\label{e:bkk}
    b(t, x) = \big[ f (\rho) \big] (t, x)
\end{equation}
is a locally strongly nearly incompressible vector field, in the sense of Definition~\ref{d:nearly}.
By combining~\eqref{e:comparison} and conditions (2) and (3) in the statement of the theorem we get that for every $R>0$ and $T>0$ there exists a constant $C_{R, T} >0$ such that
\begin{equation}
\label{eq:bound rho}
\frac{1}{C_{R, T}} \leq \rho(t, x) \leq \| \bar U \|_{L^{\infty}}
\quad \textrm{for a.e. $(t, x) \in \, [0, T] \times B_R(0)$}.
\end{equation}

Also, by Theorem~\ref{t:chenra} and by assumption (1), the map $t \mapsto \rho (t, \cdot) $ is strongly continuous with values in $L^1_\loc (\R)$.

Thanks to Proposition~\ref{p:reg} and assumption (1), we deduce that
$\rho \in L^{\infty}_\loc \big( ]0, + \infty[; BV_\loc (\R) \big)$. Moreover,
by exploiting the equation satisfied by $\rho$
we also have ${\rho \in BV_\loc (]0, + \infty [ \times \R ) }$, and since $f$ is smooth,
the same regularity is inherited by the vector field $b$ defined in~\eqref{e:bkk}.

We can then apply Theorem~\ref{t:uni2} and conclude that, for every $\bar U = (\bar u_1 , \dots , \bar u_k) \in L^\infty(\R;\R^k)$ the Cauchy problems
\begin{equation}
\label{e:ci}
\left\{
\begin{array}{lll}
              \partial_t u_i  + \partial_x ( b u_i ) =0  \\
              u_i (0, x) = \bar u_i ( x ) \,,
\end{array}
\right.
\qquad i=1 , \dots , k\,,
\end{equation}
admit a unique locally bounded solution $U=  (u_1, \dots,u_k ) \in \mathcal C^0 \big( [0, +\infty[ ; L^1_\loc (\R; \R^k)-s \big)$.

{\sc Step 3.} To conclude the proof of the existence part, we need to show that, for every $t \ge 0$,
\begin{equation}
\label{e:mrho}
\rho (t, x) = |U| (t, x) \qquad \textrm{for a.e.~$x \in \R$.}
\end{equation}
Thanks to \eqref{eq:bound rho}, for every $i=1,\ldots,k$ we can define $\theta_i := u_i / \rho$,
and we get
$$
\partial_t \big( \rho \theta_i \big) + \partial_x \big( b \rho \theta_i \big) =0\,,
$$
in the sense of distributions on $]0,+\infty[\times \R$.
Since the functions $\theta_i$ are locally bounded, we can apply Lemma~\ref{l:lift}(i) on the time interval $[\tau,+\infty[$ for an arbitrary $\tau >0$ to deduce that
$$
\partial_t \big( \rho \theta_i^2 \big) + \partial_x \big( b \rho \theta_i^2 \big) =0\,,
$$
in the sense of distributions on $]0,+\infty[\times \R$, or equivalently
\begin{equation}\label{e:toadd}
\partial_t \left( \frac{u_i^2}{\rho} \right) +
\partial_x \left( b \frac{u_i^2}{\rho} \right) =0\,.
\end{equation}
Summing over $i=1,\ldots,k$ the equations in \eqref{e:toadd}, we obtain
\begin{equation}\label{e:eqU}
\partial_t \left( \frac{|U|^2}{\rho} \right) +
\partial_x \left( b \frac{|U|^2}{\rho} \right) =0 \,.
\end{equation}
Moreover, \eqref{e:ci} and \eqref{e:datorho} imply that
\begin{equation}\label{e:caudato}
\frac{|U(0,\cdot)|^2}{\rho(0,\cdot)} = |\bar U|\,.
\end{equation}
Recalling \eqref{e:bkk} and comparing \eqref{e:eqrho} and \eqref{e:eqU}, we see that $\rho$ and $|U|^2/\rho$ are strongly continuous in time, and solve the same Cauchy problem (the initial data coincide because of \eqref{e:caudato}). Thus, by applying Theorem~\ref{t:uni2} we obtain that,
for every $t \ge 0$, $\rho (t, x) = |U| (t, x)$ for a.e.~$x \in \R$, as desired.

{\sc Uniqueness.} Let $U_A$ and $U_B$ be two bounded strongly continuous renormalized entropy solutions of~\eqref{e:kko}. Then $\rho_A = |U_A|$ and $\rho_B= |U_B|$ are two entropy admissible solutions of~\eqref{e:rho}
for which $\lim_{t \to 0^+} \rho_A (t, \cdot) = \lim_{t \to 0^+} \rho_B (t, \cdot) = |  \bar U|$ strongly in $L^1_\loc (\R)$. By the uniqueness part in Theorem~\ref{t:kru}, we deduce that,
for every $t \in [0,+\infty[$,
$$
    \rho_A (t, x) = \rho_B (t, x) \qquad \textrm{for a.e.~$ x \in \R $.}
$$
Let $\rho$ denote the common value of $\rho_A$ and $\rho_B$. We have
$$
\partial_t U_{A,i} + \partial_x \big( f(\rho) U_{A,i} \big) = 0\,, \qquad
\partial_t U_{B,i} + \partial_x \big( f(\rho) U_{B,i} \big) = 0
\qquad \text{for every $i=1,\ldots,k$}
$$
and
$$
U_{A,i} (0,\cdot) = U_{B,i}(0,\cdot) \qquad
\text{a.e.~in $\R$, for every $i=1,\ldots,k$.}
$$
Theorem~\ref{t:uni2} thus gives that for every $i=1,\ldots,k$ and all $t\in [0,+\infty[$
$$
U_{A,i} (t,x) = U_{B,i}(t,x) \qquad
\text{for a.e.~$x \in \R$,}
$$
as desired.
\end{proof}

\begin{remark}
\label{r:sca}
In Theorem~\ref{t:kk1} we restrict to the one-dimensional case $d=1$ because in the proof we need to apply Ole{\u\i}nik's estimate \eqref{e:oleinik} to gain a regularizing effect (this is also the reason why we impose that the function $\rho \mapsto f(\rho) \rho$ is uniformly convex).
\end{remark}

\begin{remark}
\label{r:ABDL} By exploiting the same arguments as in the proof of
Theorem~\ref{t:kk1}, but applying Theorem~\ref{t:uni1} instead of
Theorem~\ref{t:uni2}, one obtains the existence and uniqueness
result proven by Ambrosio, Bouchut and De Lellis in~\cite{ABDL}.
Namely, assume that $|\bar U| \in L^{\infty} \cap BV_\loc(\R^d)$.
Then~\eqref{e:kk} admits a unique renormalized entropy solution $U
\in \mathcal C^0 \big([0, + \infty[; L^{\infty} (\R^d;
\R^k)-w^{\ast} \big).$
\end{remark}

\appendix

\section{Systems of conservation laws in one space dimension} \label{s:con_law:gen}

For completeness, in this short Appendix we go over some results that are used in the paper. Consider a system of conservation laws in one space dimension:
\begin{equation}
\label{e:sy}
            \partial_t  U+ \partial_x \big[F (U)\big] = 0 \, ,
\qquad \text{where $(t, x) \in [0, +\infty[ \times \R$ and $U \in \R^k$,}
\end{equation}
with $F : \R^k \to \R^k$ smooth. For a general introduction to the subject, we refer for example to the books by Bressan~\cite{Bre:book}, by Dafermos \cite{Daf:book} and by Serre \cite{Serre:book}. In the following, we focus on the Cauchy problem, assigning the initial condition
\begin{equation}
\label{e:caudat}
    U(0, x) = \bar U (x).
\end{equation}
It is known that, even if the initial datum $\bar U$ is very regular, in general there is no classical solution to~\eqref{e:sy}-\eqref{e:caudat} defined on the whole time interval $t \in  [0, \, +\infty[$. Examples of solutions starting from a datum $\bar U \in \mathcal C^{\infty}$ and developing discontinuities in finite time are available even in the scalar case $k=1$. It is thus natural to interpret \eqref{e:sy} in the sense of distributions, by requiring that $U$ is a locally bounded measurable function satisfying
\begin{equation}\label{e:distr:sy}
\int_0^{+\infty} \int_{\R} \Big[ U \, \partial_t \Phi   + F(U) \, \partial_x \Phi \Big] \; dx dt = 0 \qquad \text{ for any $\Phi \in \mathcal C^1_c \big( ]0, \, + \infty[ \times \R; \R^k \big)$.}
\end{equation}
However, in general a weak solution of the Cauchy problem~\eqref{e:sy}-\eqref{e:caudat} is not unique. To select a unique solution, various \emph{admissibility conditions} have been introduced, often motivated by physical considerations (see again Dafermos \cite{Daf:book}). Here we focus on the entropy admissibility condition.
\begin{definition}\label{e:entrsys}Let $\eta: \R^k \to \R$ and $q: \R^k \to \R$ be twice continuously differentiable functions. The couple $(\eta, q)$ is an {\em entropy-entropy flux pair} for~\eqref{e:sy} if
 \begin{equation}
 \label{e:entropy:sy}
                              \nabla \eta (U) D F (U) =   \nabla q (U) \qquad \text{ for every $U \in \R^k$,}
\end{equation}
where $DF$ denotes the Jacobian matrix of $F$.  Let $U: [0, \, + \infty [ \times \R \to \R^k$ be a bounded, measurable function. Then $U$ is an \emph{entropy admissible solution} of \eqref{e:sy} if the following holds: for any entropy-entropy flux pair $(\eta, q)$ with $\eta$ convex,
and for any $\phi \in \mathcal C^1_c \big( ]0, \, + \infty[ \times \R \big)$ nonnegative, we have
\begin{equation}
\label{e:e:distr}
         \int_0^{+\infty} \int_{\R} \eta(U) \, \partial_t \phi   + q(U) \, \partial_x \phi \; dx dt   \ge 0.
\end{equation}
\end{definition}

Let us now focus on a single conservation law in one space dimension
\begin{equation}\label{e:scalar}
\partial_t u + \partial_x f(u) = 0 \,.
\end{equation}
Kru\v{z}kov \cite{Kru} proved existence and uniqueness results for
the Cauchy problem in the class of entropy admissible solutions
(in the sense of Definition~\ref{e:entrsys}) assuming the initial
condition in strong sense:

\begin{theorem}[Kru\v{z}kov, \cite{Kru}]\label{t:kru}
Let $f$ be a smooth function and assume that $\bar u \in L^{\infty} (\R)$. Then there exists a unique function $u \in L^\infty \big( [0,+\infty[ \times \R \big)$ satisfying the following properties:
\begin{enumerate}
\item $u$ is  a solution in the sense of distributions of \eqref{e:scalar};
\item the solution $u$ is entropy admissible;
\item $\lim\limits_{t\to 0^+} u(t,\cdot) = \bar u$ in the $L^1_\loc(\R)$ topology.
\end{enumerate}
\end{theorem}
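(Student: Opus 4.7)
The proof I would give follows Kru\v{z}kov's original strategy, splitting cleanly into existence and uniqueness/stability. The uniqueness argument is the conceptual core, and I would write it first; existence then follows by constructing an approximate solution that passes through the same contraction estimate in the limit.

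\textbf{Uniqueness via doubling of variables.} Given two bounded entropy admissible solutions $u$ and $v$ with the same initial datum, I would apply the entropy inequality \eqref{e:e:distr} with the Kru\v{z}kov entropies
\[
\eta_k(u) = |u-k|, \qquad q_k(u) = \operatorname{sgn}(u-k)\bigl(f(u)-f(k)\bigr), \qquad k\in\R.
\]
Although these are not $C^2$, they are convex and can be approximated by smooth convex entropies, so \eqref{e:e:distr} persists. I would write the entropy inequality for $u(t,x)$ with parameter $k=v(s,y)$ and a test function $\phi(t,x,s,y)$, and symmetrically for $v(s,y)$ with $k=u(t,x)$, then add and integrate. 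Choosing $\phi(t,x,s,y) = \psi\bigl(\tfrac{t+s}{2},\tfrac{x+y}{2}\bigr)\,\delta_\eps(t-s)\delta_\eps(x-y)$ and letting $\eps\to 0$, the parabolic-in-$(t,s)$ and $(x,y)$ mollifications collapse and one obtains
\[
\partial_t |u-v| + \partial_x\bigl[\operatorname{sgn}(u-v)(f(u)-f(v))\bigr]\le 0
\]
in the distributional sense on $]0,+\infty[\times\R$. With $L:=\operatorname{Lip}(f)$ on the relevant range, a standard choice of test function supported in a truncated cone $\{|x|\le R + L(T-t)\}$ yields
\[
\int_{|x|<R}|u(t,x)-v(t,x)|\,dx \le \int_{|x|<R+Lt}|u(\tau,x)-v(\tau,x)|\,dx
\]
for $0<\tau<t$. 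The strong continuity assumption (3) lets me send $\tau\to 0^+$ and conclude $u\equiv v$; in particular this gives $L^1_\loc$-contraction with respect to initial data.

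\textbf{Existence via vanishing viscosity.} Consider the parabolic regularization
\[
\partial_t u^\eps + \partial_x f(u^\eps) = \eps\,\partial_{xx} u^\eps, \qquad u^\eps(0,\cdot) = \bar u \ast \eta_\eps,
\]
which has a global smooth solution. The maximum principle gives $\|u^\eps\|_\infty \le \|\bar u\|_\infty$. Multiplying by $\eta'(u^\eps)$ for any $C^2$ convex $\eta$ produces
\[
\partial_t \eta(u^\eps) + \partial_x q(u^\eps) = \eps\,\partial_{xx}\eta(u^\eps) - \eps\,\eta''(u^\eps)|\partial_x u^\eps|^2 \le \eps\,\partial_{xx}\eta(u^\eps),
\]
so any weak-$\ast$ limit satisfies the entropy inequality. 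To obtain strong $L^1_\loc$ compactness (needed to pass $f(u^\eps)$ to the limit), I would first handle $\bar u\in BV\cap L^\infty$: by differentiating the equation, $\|u^\eps(t,\cdot)\|_{BV}\le \|\bar u\ast\eta_\eps\|_{BV}\le \|\bar u\|_{BV}$, which together with the equation gives an $\eps$-uniform $BV_\loc$ bound in space-time, hence strong compactness, and the limit $u$ is an entropy solution with $u(t,\cdot)\to\bar u$ strongly in $L^1_\loc$ as $t\to 0^+$. For a general datum $\bar u\in L^\infty$, I approximate in $L^1_\loc$ by a sequence $\bar u_n\in BV_\loc\cap L^\infty$ with uniformly bounded $L^\infty$ norm, produce entropy solutions $u_n$, and invoke the $L^1$-contraction obtained above to see that $\{u_n\}$ is Cauchy in $C^0([0,T];L^1_\loc)$. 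The limit $u$ is the desired entropy solution, and the strong continuity at $t=0$ is inherited from the estimate $\|u(t,\cdot)-\bar u\|_{L^1_\loc}\le \|u(t,\cdot)-u_n(t,\cdot)\|_{L^1_\loc}+\|u_n(t,\cdot)-\bar u_n\|_{L^1_\loc}+\|\bar u_n-\bar u\|_{L^1_\loc}$.

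\textbf{Main obstacle.} The delicate step is the doubling-of-variables calculation: one must justify the distributional manipulations with the non-smooth entropies $\eta_k$ and, above all, correctly handle the diagonal limit $\eps\to 0$ of the mollified test function to recover the clean inequality $\partial_t |u-v| + \partial_x[\operatorname{sgn}(u-v)(f(u)-f(v))]\le 0$. Everything else in the argument (the $L^\infty$ bound, the convex entropy inequality for the viscous approximation, the $BV$-to-$L^\infty$ density argument) is essentially routine once this is in hand.
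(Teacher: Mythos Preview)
Your sketch is a faithful outline of Kru\v{z}kov's classical argument: doubling of variables with the entropies $|u-k|$ to obtain the $L^1$ contraction, vanishing viscosity plus $BV$ approximation for existence. However, the paper does not actually prove this theorem: it is stated in the Appendix purely as background, with a citation to Kru\v{z}kov~\cite{Kru} and no proof given. So there is nothing to compare against; your proposal simply reproduces (at the level of a sketch) the original proof that the citation points to.
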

Furthermore, the solution $u$ enjoys the following properties:
\begin{theorem}[Kru\v{z}kov, \cite{Kru}]\label{t:kru2}
Let $f$ be a smooth function, and for $\bar u$, $\bar v \in L^{\infty} (\R)$ let
$u$, $v \in L^\infty \big( [0,+\infty[ \times \R \big)$ be the solutions provided by Theorem~\ref{t:kru}
to the Cauchy problems for \eqref{e:scalar} with initial data $\bar u$ and $\bar v$, respectively.
Define further
\begin{equation}\label{defL}
L:=\sup\Big\{|f'(z)| :\ |z|\leq\max\{\|\bar u\|_\infty,\|\bar v\|_\infty\}\Big\}.
\end{equation}
Then, for any $R>0$ and for all $t>0$ we have
\begin{equation}\label{e:comparison}
\int_{-R}^R \big[ u(t,x) - v(t,x) \big]^+ \, dx \leq \int_{-R-Lt}^{R+Lt} \big[ \bar u (x) - \bar v(x) \big]^+ \, dx \, ,
\end{equation}
where $[ \cdot ]^+$ denotes the positive part. Moreover, if we
assume ${\bar u \in BV_\loc (\R)} $ and denote by
$\mathrm{TotVar}_R \{ u(t, \cdot) \}$ the total variation of the
function $u(t, \cdot)$ on the interval $]-R,R[ \subset \R$, then
for all $t \in [0, + \infty[$
\begin{equation}
\label{e:bvkr}
          \mathrm{TotVar}_{R}  \{ u(t, \cdot) \}  \leq \mathrm{TotVar}_{R+ L t} \{ \bar u \}\,.
\end{equation}
\end{theorem}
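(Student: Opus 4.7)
\medskip
\noindent\textbf{Proof proposal for Theorem~\ref{t:kru2}.}
The plan is to establish the one-sided $L^1$ contraction estimate \eqref{e:comparison} via Kru\v{z}kov's doubling of variables, and then to derive \eqref{e:bvkr} as a quick corollary by testing against spatial translates.

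\medskip
\noindent\emph{Step 1 (doubling of variables).} For each $k\in\R$, both $u$ and $v$ satisfy the Kru\v{z}kov entropy inequality
\[
\partial_t (u-k)^+ + \partial_x\bigl[\mathbf{1}_{u>k}\,(f(u)-f(k))\bigr]\leq 0
\]
and the analogous one for $v$. To combine them, I would view $u=u(t,x)$ and $v=v(s,y)$ as functions on disjoint variables, use the inequality for $u$ with $k=v(s,y)$, the inequality for $v$ with $k=u(t,x)$, test each against a nonnegative $\varphi(t,x,s,y)=\psi(\tfrac{t+s}{2},\tfrac{x+y}{2})\,\rho_\eps(\tfrac{t-s}{2})\rho_\eps(\tfrac{x-y}{2})$, and then let $\eps\to 0^+$ after adding. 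This is the technical heart of the argument and is the main obstacle: one must be careful that the test function is indeed admissible (integrate against compactly supported, $C^1$ nonnegative test functions) and that the limit commutes with the entropy defects. The outcome is the distributional inequality
\[
\partial_t (u-v)^+ + \partial_x\bigl[\mathbf{1}_{u>v}\,(f(u)-f(v))\bigr]\leq 0\qquad\text{on }]0,+\infty[\times\R.
\]

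\medskip
\noindent\emph{Step 2 (cone estimate).} Fix $R>0$ and $t>0$. Plug into the inequality a nonnegative Lipschitz test function of the form
\[
\varphi(s,x)=\chi_{[0,t]}(s)\,\xi_{R+L(t-s)}(x),
\]
where $\xi_r$ is a smooth approximation of $\mathbf{1}_{]-r,r[}$. The slope of the cone is chosen to be exactly $L$ from \eqref{defL}, so that $|f(u)-f(v)|\leq L|u-v|$ pointwise. The flux term $\mathbf{1}_{u>v}(f(u)-f(v))$ combined with the $-L\partial_x\xi$ piece produced by differentiating the cone has a favorable sign, while the boundary term at the top of the cone yields $\int_{-R}^R(u(t,\cdot)-v(t,\cdot))^+\,dx$ and the boundary term at $s=0$ yields the right-hand side of \eqref{e:comparison}. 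Passing to the limit as the smoothing of $\xi$ disappears, and using that $u$ and $v$ attain their initial data in $L^1_\loc$ (from Theorem~\ref{t:kru}(3)), gives \eqref{e:comparison}.

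\medskip
\noindent\emph{Step 3 (BV bound).} For $h\in\R$ set $v_h(t,x):=u(t,x+h)$, which is again the Kru\v{z}kov solution with initial datum $\bar u(\cdot+h)$ and the same Lipschitz constant $L$. Applying the symmetric (two-sided) version of \eqref{e:comparison}, obtained by summing the $(u-v)^+$ and $(v-u)^+$ estimates, yields
\[
\int_{-R}^R |u(t,x+h)-u(t,x)|\,dx \;\leq\; \int_{-R-Lt}^{R+Lt}|\bar u(x+h)-\bar u(x)|\,dx.
\]
Dividing by $|h|$ and taking the supremum over $h\neq 0$, the left-hand side tends to $\mathrm{TotVar}_R\{u(t,\cdot)\}$ and the right-hand side is bounded by $\mathrm{TotVar}_{R+Lt}\{\bar u\}$, by the characterization of $BV$ through the $L^1$ modulus of translations. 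This gives \eqref{e:bvkr}. The delicate point here is only that translation commutes with the entropy admissibility condition, which is immediate since \eqref{e:scalar} is autonomous in $x$.
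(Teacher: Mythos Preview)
The paper does not give its own proof of this statement: Theorem~\ref{t:kru2} is stated in the Appendix as a classical result of Kru\v{z}kov, with a citation to \cite{Kru} and no argument supplied. So there is nothing in the paper to compare against.

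Your proposal is the standard Kru\v{z}kov argument and is essentially correct. Two small remarks. In Step~2, the test function $\chi_{[0,t]}(s)\,\xi_{R+L(t-s)}(x)$ is not admissible as written (it is not $C^1$ in $s$, nor compactly supported away from $s=0$); one should also mollify in time and then let the mollification parameter go to zero, using the strong $L^1_\loc$ continuity of $t\mapsto u(t,\cdot)$ and $t\mapsto v(t,\cdot)$ at both endpoints to recover the boundary terms. In Step~3, ``dividing by $|h|$ and taking the supremum over $h\neq 0$'' is not quite the right operation: for the left-hand side you want the $\liminf$ as $h\to 0$, which dominates $\mathrm{TotVar}_R\{u(t,\cdot)\}$ by lower semicontinuity of total variation under $L^1_\loc$ convergence of difference quotients; for the right-hand side you use the upper bound $\int_{-R-Lt}^{R+Lt}|\bar u(x+h)-\bar u(x)|\,dx\leq |h|\,\mathrm{TotVar}_{R+Lt+|h|}\{\bar u\}$, valid for small $|h|$, and then let $h\to 0$. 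With these cosmetic fixes the argument is complete.
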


Under an assumption of uniform convexity on the flux, namely $f'' (u) \ge c> 0$ for any $u \in \R$, the classical Ole{\u\i}nik's result \cite{Ole} asserts that the solution of the Cauchy problem provided by Theorem~\ref{t:kru}, with initial datum $\bar u \in L^\infty(\R)$, enjoys the following one-sides estimate: for all $t>0$ it holds
\begin{equation}
\label{e:oleinik}
           u(t,y) - u(t,x) \leq \frac{y -x }{c t} \qquad \text{for all $x<y$, $x,\,y\in\R\setminus N_t$,}
\end{equation}
with $N_t$ Lebesgue negligible (possibly depending on $t$).
In particular, \eqref{e:oleinik} implies that the space distributional derivative $\partial_x u(t,\cdot)$
of the solution $u$ satisfies the upper bound
\begin{equation}\label{e:olga}
\partial_x u(t,\cdot) \leq \frac{1}{ct} \qquad \text{in ${\mathcal D}'(\R)$, for all~$t >0$,}
\end{equation}
and this implies in particular that  $\partial_x u(t,\cdot)$ is a measure.
The following fact is well known, but we provide the proof for completeness.
\begin{propos}\label{p:reg}
Let the flux $f$ be uniformly convex, i.e. $f'' \geq c > 0$, and for $\bar u \in L^{\infty} (\R)$
let $u \in L^\infty \big( [0,+\infty[ \times \R \big)$ be the solution provided by Theorem~\ref{t:kru}.
Then $u \in L^\infty_\loc \big( ]0,+\infty[ ; BV_\loc(\R) \big)$.
\end{propos}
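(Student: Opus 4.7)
The plan is to upgrade Ole\u{\i}nik's one-sided distributional bound \eqref{e:olga} into a two-sided bound on the total variation by combining it with the $L^\infty$ bound on $u$ and the elementary identity $|\nu|=2\nu^{+}-\nu$ for signed measures. The main obstacle is that $u(t,\cdot)$ is not \emph{a priori} $BV$, so one cannot directly manipulate $\partial_x u(t,\cdot)$ as a measure in the identity above; this is handled by a standard mollification in the $x$-variable followed by a lower semicontinuity argument.

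\medskip

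\noindent\textbf{Step 1 (mollification).} Fix $t>0$ and set $u^{\eps}(t,\cdot):=u(t,\cdot)*\eta^{\eps}$, where $\eta^{\eps}$ is a non-negative smooth mollifier on $\R$. Since \eqref{e:olga} gives $\partial_x u(t,\cdot)\le 1/(ct)$ in $\mathcal{D}'(\R)$ and convolution with a non-negative test function preserves distributional inequalities, we obtain the \emph{pointwise} bound
$$
\partial_x u^{\eps}(t,x)\le \frac{1}{ct}\qquad\text{for every }x\in\R,
$$
together with $\|u^{\eps}(t,\cdot)\|_{L^{\infty}(\R)}\le \|u\|_{L^{\infty}([0,+\infty[\times\R)}$.

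\medskip

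\noindent\textbf{Step 2 (BV bound for the smooth approximation).} For a smooth function $g$ on $\R$ with $g'\le M$ everywhere, the trivial identity $|g'|=2(g')^{+}-g'$ yields, for every bounded interval $[a,b]$,
$$
\int_{a}^{b}|g'(x)|\,dx \;=\; 2\int_{a}^{b}(g')^{+}(x)\,dx - \bigl(g(b)-g(a)\bigr) \;\le\; 2M(b-a) + 2\|g\|_{L^{\infty}}.
$$
Applying this to $g=u^{\eps}(t,\cdot)$ with $M=1/(ct)$ gives, for every $R>0$,
$$
\int_{-R}^{R}|\partial_x u^{\eps}(t,x)|\,dx \;\le\; \frac{4R}{ct}+2\|u\|_{L^{\infty}}.
$$

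\medskip

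\noindent\textbf{Step 3 (passage to the limit).} Since $u^{\eps}(t,\cdot)\to u(t,\cdot)$ in $L^{1}_{\loc}(\R)$ as $\eps\downarrow 0$, the lower semicontinuity of the total variation with respect to $L^{1}_{\loc}$ convergence yields
$$
|Du(t,\cdot)|(]-R,R[) \;\le\; \liminf_{\eps\to 0}\int_{-R}^{R}|\partial_x u^{\eps}(t,x)|\,dx \;\le\; \frac{4R}{ct}+2\|u\|_{L^{\infty}}.
$$
This bound is uniform for $t$ in any compact subset of $]0,+\infty[$, so $u\in L^{\infty}_{\loc}\bigl(]0,+\infty[;BV_{\loc}(\R)\bigr)$, as claimed. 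The constant deteriorates as $t\downarrow 0$, consistently with the fact that no bound on $\mathrm{TotVar}(\bar u)$ is assumed on the initial datum; the regularising effect is entirely due to the uniform convexity of $f$ through \eqref{e:oleinik}.
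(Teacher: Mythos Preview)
Your proof is correct and follows essentially the same route as the paper: both use the identity $|\nu|=2\nu^{+}-\nu$ together with the Ole\u{\i}nik bound on $\nu^{+}$ and the $L^{\infty}$ bound to control the endpoint contribution, arriving at the identical estimate $|Du(t,\cdot)|([-R,R])\le 4R/(ct)+2\|u\|_{\infty}$. The only difference is that the paper skips your mollification step by invoking directly (in the sentence preceding the proposition) the standard fact that a distribution bounded from above by a constant is automatically a locally finite signed measure, so that the decomposition $\partial_x u(t,\cdot)=[\partial_x u(t,\cdot)]^{+}-[\partial_x u(t,\cdot)]^{-}$ makes sense without any approximation; your mollification-plus-lower-semicontinuity argument is a perfectly valid and more self-contained way to reach the same conclusion.
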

\begin{proof}
We decompose the measure $\partial_x u(t,\cdot)$ as
$$
\partial_x u(t,\cdot) = \big[ \partial_x u(t,\cdot) \big]^+ - \big[ \partial_x u(t,\cdot) \big]^- \,.
$$
Then, for every $R>0$ we have
\begin{eqnarray*}
\big| \partial_x u(t,\cdot) \big| ([-R,R]) &=&
\big[ \partial_x u(t,\cdot) \big]^+ ([-R,R]) +  \big[ \partial_x u(t,\cdot) \big]^- ([-R,R]) \\
& = & 2 \big[ \partial_x u(t,\cdot) \big]^+ ([-R,R]) - \big[ \partial_x u(t,\cdot) \big] ([-R,R]) \\
& \leq & \frac{4R}{ct} + | u(R^+) - u(-R^-) |
\; \leq \; \frac{4R}{ct} + 2 \|u\|_\infty \,.
\end{eqnarray*}
\end{proof}

The following results regarding the strong continuity of the entropy admissible solution has been proven by Chen and Rascle \cite{chenra}.
\begin{theorem}\label{t:chenra}
Assume that $f''(u)\geq c >0$ for any $u \in \R$ and let $\bar u \in L^{\infty} (\R)$. Let $u \in L^\infty \big( [0,+\infty[ \times \R \big)$ be a solution in the sense of distributions of \eqref{e:scalar} with $u(0,\cdot)=\bar u$, and assume that $u$ is entropy admissible. Then, $u \in \mathcal C^0 \big( [0,+\infty[ ; L^1_\loc(\R)-s \big)$, and in particular $u$ is the unique solution provided by Theorem~\ref{t:kru}.
\end{theorem}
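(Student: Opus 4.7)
The plan is to split the argument into two parts: first establish strong $L^1_\loc$ continuity of $t \mapsto u(t,\cdot)$ on the open interval $]0,+\infty[$, and then separately handle the initial time $t=0$. Once strong continuity on $[0,+\infty[$ is in hand, Theorem~\ref{t:kru} immediately provides both the identification with the Kru\v{z}kov solution and the uniqueness statement, so no further work is needed for the last sentence of the theorem.

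Continuity for $t>0$ will follow from the Ole\u{\i}nik regularizing effect encoded in the uniform convexity $f''\geq c>0$. Concretely, Proposition~\ref{p:reg} yields $u \in L^\infty_\loc(]0,+\infty[;\, BV_\loc(\R))$, and combining this spatial $BV$ regularity with the conservation law $\partial_t u = -\partial_x f(u)$ and the smoothness of $f$ shows that $\partial_t u$ is a locally bounded measure on $]0,+\infty[\times\R$. Hence $u \in BV_\loc(]0,+\infty[\times\R)$, and the standard slicing lemma for $BV$ functions exhibits a representative of $u$ that is strongly $L^1_\loc$-continuous at every positive time.

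The delicate part is continuity at $t=0$. Here the plan is to exploit the strictly convex entropy $\eta(u)=u^2/2$, whose flux $q$ satisfies $q'(u)=uf'(u)$. Testing the entropy inequality against a nonnegative $\phi \in C^1_c(\R)$, the map $E(t):= \frac{1}{2}\int u^2(t,x)\phi(x)\,dx$ is essentially nonincreasing on $]0,+\infty[$ up to a Lipschitz correction (since $|q(u)|$ is bounded), while the weak-$*$ continuity $u(t,\cdot)\weaks\bar u$ at $t=0$, combined with the convexity of $u \mapsto u^2$, forces the lower-semicontinuity bound $\liminf_{t\to 0^+}E(t) \geq E(0):= \frac{1}{2}\int\bar u^2\phi$. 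For the matching upper bound $\limsup_{t\to 0^+}E(t)\leq E(0)$, one has to couple the entropy inequality with the distributional form of the PDE, which is what actually records the initial datum $\bar u$: I would test with functions of the form $\Phi(t,x)=\chi_\delta(t)\phi(x)$, where $\chi_\delta$ is a smooth cutoff concentrated near $t=0$, and pass to the limit $\delta\to 0^+$ to transfer $\bar u$ into the entropy balance. Once $E(t)\to E(0)$ is established, combining weak-$*$ convergence $u(t,\cdot)\weaks\bar u$ with convergence of the $L^2_\loc$ norms upgrades to strong convergence in $L^2_\loc(\R)$, hence to strong convergence in $L^1_\loc(\R)$ on bounded sets.

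The main obstacle is the $\limsup$ estimate just described. Definition~\ref{e:entrsys} only tests the entropy inequality against functions supported away from $t=0$, so a priori nothing in that inequality alone links $E(t)$ for small $t>0$ to $E(0)$; ruling out an initial layer in which the $L^2_\loc$-mass drops requires a careful coupling with the initial condition encoded in the weak form of the PDE, and the uniform convexity of $f$ enters in an essential way both through Proposition~\ref{p:reg} (in the first part of the argument) and through the strict convexity of the quadratic entropy (in the second part).
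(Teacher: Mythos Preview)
The paper does not prove Theorem~\ref{t:chenra}; it records the statement in the Appendix and attributes it to Chen and Rascle~\cite{chenra}. So there is no in-paper argument to compare against, and what follows is an assessment of your outline on its own merits.

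Your two-part strategy (Ole{\u\i}nik regularization for $t>0$, then an entropy argument at $t=0$) is indeed the route taken in~\cite{chenra}, but two points in the writeup need repair.

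First, there is a circularity in your appeal to Proposition~\ref{p:reg}. Both that proposition and the Ole{\u\i}nik estimate~\eqref{e:oleinik} are stated in the Appendix \emph{only} for the Kru\v{z}kov solution supplied by Theorem~\ref{t:kru}, which is exactly the object you are trying to identify $u$ with. What you actually need is the stronger (true, classical, but not stated here) fact that \emph{every} bounded entropy-admissible distributional solution of~\eqref{e:scalar} with $f''\geq c>0$ satisfies~\eqref{e:oleinik} for $t>0$. This must be proved directly from Definition~\ref{e:entrsys}, and it is one of the substantive ingredients in~\cite{chenra}; you cannot simply quote Proposition~\ref{p:reg}.

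Second, you correctly flag the $\limsup$ bound at $t=0$ as the crux, but your proposed fix---testing the PDE with $\chi_\delta(t)\phi(x)$---only reproduces the weak-$*$ convergence $u(t,\cdot)\weaks\bar u$ that you already have, and does not by itself transfer $\bar u$ into the \emph{entropy} balance. Also, your closing remark that the uniform convexity of $f$ enters ``through the strict convexity of the quadratic entropy'' is off: $\eta(u)=u^2/2$ is strictly convex for any $f$. In~\cite{chenra} the hypothesis $f''\geq c>0$ is used quantitatively, to bound the strong $L^1_\loc$ defect $u(t,\cdot)-\bar u$ in terms of the entropy dissipation measure near $t=0$; this is the step your outline is missing.
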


\begin{remark}\label{r:concave} Proposition~\ref{p:reg} and Theorem~\ref{t:chenra} can be easily extended to the case of a uniformly concave flux, that is $f'' \leq -c <0$. This can be seen by setting $v(t, \, x) = u(t, -x)$ and observing that $v$ is an entropy admissible solution of the equation
$ \partial_t v + \partial_x \big[ (- f)(v) \big] =0$, which has a uniformly convex flux.
\end{remark}

\begin{remark}\label{r:local} If $f'$ is globally bounded, then Theorems~\ref{t:kru} and
\ref{t:kru2} and Proposition~\ref{p:reg} can be extended to the
case of a {\em locally} bounded initial datum $\bar u \in
L^\infty_\loc(\R)$, the only difference in the results obtained
being that now the solution $u$ belongs to $L^\infty_\loc \big(
[0,+\infty[\times \R \big)$. Indeed the constant $L$
in~\eqref{e:comparison} and~\eqref{e:bvkr} actually depends on $\|
\bar u \|_{L^\infty}$ and $\| \bar v \|_{L^\infty}$ only through
$\| f'( \bar u) \|_{L^\infty}$ and $\| f' (\bar v) \|_{L^\infty}$
(see~\eqref{defL}). Being these quantities finite, this ensures a
global bound on the speed of propagation. In addition, observe
that the notions of solution in the sense of distributions and of
entropy admissible solution are both local. Hence, by considering
a suitable truncation of the initial datum and exploiting the
finite propagation speed, we can easily complete the argument.
\end{remark}

\bibliography{bibliochroma}

\begin{thebibliography}{10}

\bibitem{Amb:trabv}
L.~Ambrosio.
\newblock Transport equation and {C}auchy problem for {$BV$} vector fields.
\newblock {\em Invent. Math.}, 158(2):227--260, 2004.

\bibitem{Amb:cetraro}
L.~Ambrosio.
\newblock Transport equation and {C}auchy problem for non-smooth vector fields.
\newblock In {\em Calculus of variations and nonlinear partial differential
  equations}, volume 1927 of {\em Lecture Notes in Math.}, pages 1--41.
  Springer, Berlin, 2008.

\bibitem{ABDL}
L.~Ambrosio, F.~Bouchut, and C.~De~Lellis.
\newblock Well-posedness for a class of hyperbolic systems of conservation laws
  in several space dimensions.
\newblock {\em Comm. Partial Differential Equations}, 29(9-10):1635--1651,
  2004.

\bibitem{ambcri:ex}
L.~Ambrosio and G.~Crippa.
\newblock Existence, uniqueness, stability and differentiability properties of
  the flow associated to weakly differentiable vector fields.
\newblock {\em In: Transport Equations and Multi-D Hyperbolic Conservation
  Laws, Lecture Notes of the Unione Matematica Italiana}, 5, 2008.

\bibitem{ambdel}
L.~Ambrosio and C.~De~Lellis.
\newblock Existence of solutions for a class of hyperbolic systems of
  conservation laws in several space dimensions.
\newblock {\em Int. Math. Res. Not.}, (41):2205--2220, 2003.

\bibitem{Amb-De-Ma}
L.~Ambrosio, C.~De~Lellis, and J.~Mal{\'y}.
\newblock On the chain rule for the divergence of {BV}-like vector fields:
  applications, partial results, open problems.
\newblock In {\em Perspectives in nonlinear partial differential equations},
  volume 446 of {\em Contemp. Math.}, pages 31--67. Amer. Math. Soc.,
  Providence, RI, 2007.

\bibitem{ags:book}
L.~Ambrosio, N.~Gigli, and G.~Savar{\'e}.
\newblock {\em Gradient flows in metric spaces and in the space of probability
  measures}.
\newblock Lectures in Mathematics ETH Z\"urich. Birkh\"auser Verlag, Basel,
  second edition, 2008.

\bibitem{baitibressan}
P.~Baiti and A.~Bressan.
\newblock The semigroup generated by a {T}emple class system with large data.
\newblock {\em Differential Integral Equations}, 10(3):401--418, 1997.

\bibitem{bianchini}
S.~Bianchini.
\newblock Stability of {$L\sp \infty$} solutions for hyperbolic systems with
  coinciding shocks and rarefactions.
\newblock {\em SIAM J. Math. Anal.}, 33(4):959--981 (electronic), 2001.

\bibitem{Bou:cri}
F.~Bouchut and G.~Crippa.
\newblock Uniqueness, renormalization, and smooth approximations for linear
  transport equations.
\newblock {\em SIAM J. Math. Anal.}, 38(4):1316--1328 (electronic), 2006.

\bibitem{bressansrs}
A.~Bressan.
\newblock The unique limit of the {G}limm scheme.
\newblock {\em Arch. Rational Mech. Anal.}, 130(3):205--230, 1995.

\bibitem{Bre:book}
A.~Bressan.
\newblock {\em {Hyperbolic systems of conservation laws. The one-dimensional
  Cauchy problem}}, volume~20 of {\em Oxford Lecture Series in Mathematics and
  its Applications}.
\newblock Oxford University Press, Oxford, 2000.

\bibitem{Bre:illposed}
A.~Bressan.
\newblock An ill posed {C}auchy problem for a hyperbolic system in two space
  dimensions.
\newblock {\em Rend. Sem. Mat. Univ. Padova}, 110:103--117, 2003.

\bibitem{bressangoatin}
A.~Bressan and P.~Goatin.
\newblock Stability of {$L\sp \infty$} solutions of {T}emple class systems.
\newblock {\em Differential Integral Equations}, 13(10-12):1503--1528, 2000.

\bibitem{bressanshen}
A.~Bressan and W.~Shen.
\newblock Uniqueness for discontinuous {ODE} and conservation laws.
\newblock {\em Nonlinear Anal.}, 34(5):637--652, 1998.

\bibitem{chenra}
G.-Q. Chen and M.~Rascle.
\newblock Initial layers and uniqueness of weak entropy solutions to hyperbolic
  conservation laws.
\newblock {\em Arch. Ration. Mech. Anal.}, 153(3):205--220, 2000.

\bibitem{CDL}
G.~Crippa and C.~De~Lellis.
\newblock Oscillatory solutions to transport equations.
\newblock {\em Indiana Univ. Math. J.}, 55(1):1--13, 2006.

\bibitem{CDLest}
G.~Crippa and C.~De~Lellis.
\newblock Estimates and regularity results for the {D}i{P}erna-{L}ions flow.
\newblock {\em J. Reine Angew. Math.}, 616:15--46, 2008.

\bibitem{Daf:book}
C.~M. Dafermos.
\newblock {\em {Hyperbolic Conservation Laws in Continuum Physics}}.
\newblock Springer-Verlag, Berlin, {Second} edition, 2005.

\bibitem{del:notes}
C.~De~Lellis.
\newblock Notes on hyperbolic systems of conservation laws and transport
  equations.
\newblock {\em Handbook of Differential Equations: Evolutionary Differential
  Equations}, III:277--383, 2006.

\bibitem{Depauw:ex}
N.~Depauw.
\newblock Non unicit\'e des solutions born\'ees pour un champ de vecteurs {BV}
  en dehors d'un hyperplan.
\newblock {\em C. R. Math. Acad. Sci. Paris}, 337(4):249--252, 2003.

\bibitem{DiPLi}
R.~J. DiPerna and P.-L. Lions.
\newblock Ordinary differential equations, transport theory and {S}obolev
  spaces.
\newblock {\em Invent. Math.}, 98(3):511--547, 1989.

\bibitem{freistuhler}
H.~Freist{\"u}hler.
\newblock On the {C}auchy problem for a class of hyperbolic systems of
  conservation laws.
\newblock {\em J. Differential Equations}, 112(1):170--178, 1994.

\bibitem{Frid}
H.~Frid.
\newblock Asymptotic stability of non-planar {R}iemann solutions for multi-{D}
  systems of conservation laws with symmetric nonlinearities.
\newblock {\em J. Hyperbolic Differ. Equ.}, 1(3):567--579, 2004.

\bibitem{Jenssen}
H.~K. Jenssen.
\newblock Blowup for systems of conservation laws.
\newblock {\em SIAM J. Math. Anal.}, 31(4):894--908 (electronic), 2000.

\bibitem{KK}
B.~L. Keyfitz and H.~C. Kranzer.
\newblock A system of nonstrictly hyperbolic conservation laws arising in
  elasticity theory.
\newblock {\em Arch. Rational Mech. Anal.}, 72(3):219--241, 1979/80.

\bibitem{Kru}
S.~N. Kru{\v{z}}kov.
\newblock First order quasilinear equations with several independent variables.
\newblock {\em Mat. Sb. (N.S.)}, 81 (123):228--255, 1970.

\bibitem{lax}
P.~D. Lax.
\newblock Hyperbolic systems of conservation laws. {II}.
\newblock {\em Comm. Pure Appl. Math.}, 10:537--566, 1957.

\bibitem{Ole}
O.~A. Ole{\u\i}nik.
\newblock Discontinuous solutions of non-linear differential equations.
\newblock {\em Amer. Math. Soc. Transl. (2)}, 26:95--172, 1963.

\bibitem{Panov:onthe}
E.~Yu. Panov.
\newblock On the theory of entropy solutions of the {C}auchy problem for a
  class of nonstrictly hyperbolic systems of conservation laws.
\newblock {\em Mat. Sb.}, 191(1):127--157, 2000.

\bibitem{serre}
D.~Serre.
\newblock Solutions \`a variations born\'ees pour certains syst\`emes
  hyperboliques de lois de conservation.
\newblock {\em J. Differential Equations}, 68(2):137--168, 1987.

\bibitem{Serre:book}
D.~Serre.
\newblock {\em {Systems of Conservation Laws, I, II}}.
\newblock Cambridge University Press, Cambridge, 2000.

\bibitem{temple}
B.~Temple.
\newblock Systems of conservation laws with invariant submanifolds.
\newblock {\em Trans. Amer. Math. Soc.}, 280(2):781--795, 1983.

\end{thebibliography}

\end{document}